\documentclass[11pt]{article}

\usepackage[utf8]{inputenc}
\usepackage[T1]{fontenc}
\usepackage[USenglish]{babel}
\usepackage[a4paper,margin=32mm,headheight=14pt]{geometry}
\usepackage{lmodern}
\usepackage{microtype}
\usepackage{csquotes}
\usepackage{amsmath,amssymb}
\usepackage{amsthm}
\usepackage{xcolor}
\usepackage{tikz}
\usepackage{fancyhdr}
\usepackage[
  colorlinks=true,
  linkcolor=blue!45!black,
  citecolor=green!35!black,
  urlcolor=blue!55!black,
  pdfauthor={Alex Junior Gomez Saltachin},
  pdftitle={Categorical genus for log del Pezzo surfaces with cyclic quotient singularities},
  pdfsubject={Algebraic geometry and mirror symmetry},
  pdfkeywords={Log del Pezzo surfaces, canonical stacks, cyclic quotient singularities, categorical mirror genus, maximally mutable Laurent polynomials}
]{hyperref}
\usepackage[capitalize,nameinlink]{cleveref}

\theoremstyle{plain}
\newtheorem{theorem}{Theorem}[section]
\newtheorem{proposition}[theorem]{Proposition}
\newtheorem{lemma}[theorem]{Lemma}
\newtheorem{corollary}[theorem]{Corollary}
\newtheorem*{citedtheorem}{Theorem}
\theoremstyle{definition}
\newtheorem{definition}[theorem]{Definition}
\newtheorem{example}[theorem]{Example}
\theoremstyle{remark}
\newtheorem{remark}[theorem]{Remark}

\crefname{theorem}{Theorem}{Theorems}
\crefname{proposition}{Proposition}{Propositions}
\crefname{lemma}{Lemma}{Lemmas}
\crefname{corollary}{Corollary}{Corollaries}
\crefname{definition}{Definition}{Definitions}
\crefname{example}{Example}{Examples}
\crefname{remark}{Remark}{Remarks}

\newcommand{\cX}{\mathcal X}
\newcommand{\Db}{\mathrm D^b}
\newcommand{\coh}{\operatorname{coh}}
\newcommand{\NS}{\operatorname{NS}}
\newcommand{\rk}{\operatorname{rk}}
\newcommand{\Span}{\operatorname{Span}}
\newcommand{\Ext}{\operatorname{Ext}}
\newcommand{\C}{\mathbb C}
\newcommand{\Q}{\mathbb Q}
\newcommand{\Z}{\mathbb Z}
\newcommand{\Pj}{\mathbb P}
\newcommand{\1}{\mathbf 1}

\usepackage[
  backend=biber,
  style=numeric-comp,
  sorting=nyt,
  sortcites=true,
  giveninits=true,
  doi=true,
  url=false,
  isbn=false
]{biblatex}
\fancypagestyle{plain}{%
  \fancyhf{}
  \fancyfoot[C]{\thepage}
  
  }
\begin{document}

\title{Categorical genus for log del Pezzo surfaces with cyclic quotient singularities}
\author{Alex Junior Gomez Saltachin\\[0.5em]
\small Departamento de Matem\'atica, Pontif\'icia Universidade Cat\'olica do Rio de Janeiro\\
\small Rua Marqu\^es de S\~ao Vicente, 225, Rio de Janeiro, RJ 22451-900, Brazil\\
\small \href{mailto:alex.gomez@pucp.edu.pe}{alex.gomez@pucp.edu.pe}}
\date{}

\maketitle

\begin{abstract}
For the canonical stack $\cX$ of a log del Pezzo surface with cyclic quotient singularities, we compute its categorical genus as
\[
g_{\mathrm{cat}}(\cX)=1+\frac12\sum_j w_j(\ell_j-1),
\]
where $w_j=\gcd(n_j,q_j+1)$ and $\ell_j=n_j/w_j$ are the local widths and Gorenstein indices of the singularities $\frac1{n_j}(1,q_j)$.  Serre duality and inertial Riemann--Roch separate the smooth contribution from the local canonical characters, and identify categorical genus with $1+\dim H^{\operatorname{age}<1}_{\cX}$.  Passing to a sufficiently general $\Q$-Gorenstein deformation defines the residual invariant $g_{\mathrm{cat}}^{\mathrm{qG}}(X)$; its difference from $g_{\mathrm{cat}}(\cX)$ is exactly the weighted $T$-content contribution.  For any surface admitting a toric $\Q$-Gorenstein degeneration, this identifies the residual categorical invariant with Tveiten's mutable genus, the genus of a general maximally mutable Laurent-polynomial fiber for the chosen degeneration.  The locally $\Q$-Gorenstein rigid comparison is the special case in which the drop vanishes.  For toric surfaces, the categorical genus of the given canonical stack counts all interior lattice points and equals the genus of a general Laurent-polynomial fiber.  Explicit Oneto--Petracci mirrors give direct checks.
\end{abstract}

\medskip
\noindent\textbf{2020 Mathematics Subject Classification.}
Primary 14F08; Secondary 14J45, 14A20, 14M25.

\smallskip
\noindent\textbf{Keywords.}
Log del Pezzo surfaces, canonical stacks, cyclic quotient singularities, skew Euler form, maximally mutable Laurent polynomials.

\section{Introduction}

For a smooth del Pezzo surface $Y$, the antisymmetric part of the numerical Euler pairing has rank $2$, as follows from Hirzebruch--Riemann--Roch in Kuznetsov's numerical coordinates \cite[Example 3.5]{Kuz} (Corollary~\ref{ex:smooth-del-pezzo}).  On the mirror side, a general Landau--Ginzburg fiber has a smooth projective compactification of genus one \cite{galkin2008delpezzo,AKO}.  This agreement suggests that half the rank of the skew Euler form may behave like a genus.  What becomes of this relation when the surface acquires quotient singularities?

The motivating singular example is the general hypersurface $X_{10}\subset\Pj(1,2,3,5)$, which has a single $\frac13(1,1)$ point \cite[Section 7.3, entry 11]{OP}.  Its canonical smooth Deligne--Mumford stack $\cX_{10}$ has a skew Euler form of rank $4$, while its mirror curve has genus two.  The full exceptional collection in the author's earlier paper \cite[Proposition 5.4]{GS} gives this rank, in agreement with the intrinsic calculation in Example~\ref{ex:x10-rank}.  After reading that paper, Sergey Galkin suggested that half the rank of the antisymmetrized Euler matrix of an exceptional collection should reflect the genus of the mirror curve.  His suggestion motivates the present study.

The main contribution of this paper is the intrinsic cyclic basket formula of Theorem~\ref{thm:intro-cyclic}: the skew Euler rank, or equivalently its half-rank, is computed for arbitrary cyclic quotient singularities by the Serre operator and numerical inertial Riemann--Roch.  This proof requires neither a full exceptional collection, a toric degeneration, nor homological mirror symmetry.  We name the half-rank \emph{categorical genus}, interpret its local corrections through age, and establish its $\Q$-Gorenstein residual formula and exact categorical $T$-content drop.  The mirror-side contribution is the identification of this residual categorical genus with Tveiten's mutable genus.  Singularity content and residual cones \cite{AkhtarKasprzyk}, together with Tveiten's genus formula \cite[Theorem 3.13]{Tveiten}, are the known mirror-side inputs.  The special McKay calculation supplies a geometric model in the $\frac13(1,1)$ case.

Write $\chi^-:=\chi-\chi^{\mathsf T}$ for the skew Euler form.  We define the \emph{categorical genus} of a smooth proper surface stack $\mathcal X$ by
\[
g_{\mathrm{cat}}(\mathcal X)
:=\frac12\rk_{\Q}\chi^-_{\mathcal X}.
\]
The pairing is taken on $K_0^{\mathrm{num}}(\mathcal X)_{\Q}$.  An alternating form on a finite-dimensional vector space over a field of characteristic zero has even rank, so half its rank is an integer.  It is an invariant of the $\C$-linear triangulated category $\Db(\coh\mathcal X)$: an exact equivalence preserves the Euler pairing and its numerical quotient.  A full exceptional collection provides a matrix representation (Definition~\ref{def:categorical-genus}).

Homological mirror symmetry motivates the relation between skew Euler rank and mirror-fiber genus.  Harder--Thompson's pseudolattice description of genus-one Lefschetz fibrations compares numerical $K$-theory with its Euler pairing to relative homology with its Seifert pairing \cite[Sections 4.2 and 5]{HarderThompson}.  The antisymmetrization of the latter is, up to sign, the pullback of the fiber intersection form under the boundary map.  When vanishing cycles span the nondegenerate genus part of fiber homology, this suggests skew rank $2g$ for a genus-$g$ fiber.

Simeonov's 2026 preprint, in its second version, proves homological mirror symmetry for effective orbifold log Calabi--Yau surfaces at the large complex structure limit \cite[Definition 1.1 and Theorem 0.1]{Simeonov}.  These are rational projective orbifold surfaces with isolated cyclic quotient singularities and an effective nodal anticanonical stacky cycle whose nodes contain the orbifold points; the minimal resolution is required to have the distinguished complex structure.  His theorem identifies the derived category of coherent sheaves with the derived Fukaya--Seidel category of a constructed exact Lefschetz fibration.  This setting allows general cyclic quotient types.  His theorem gives a categorical equivalence for surfaces satisfying these hypotheses, while our numerical formula applies to all cyclic quotient log del Pezzo surfaces.

The first evidence beyond $X_{10}$ comes from log del Pezzo surfaces $Z_k$ with $k$ singular points, all of type $\frac13(1,1)$, as in the Corti--Heuberger families \cite{CH}.  For their canonical stacks $\mathcal Z_k$, the special McKay decomposition of Ishii--Ueda \cite{IU} and the local adjoint calculation of Gugiatti--Rota \cite[Example 3.5]{GR} give
\[
\rk_{\Q}\chi^-_{\mathcal Z_k}=2k+2,
\qquad
g_{\mathrm{cat}}(\mathcal Z_k)=k+1.
\]
Proposition~\ref{prop:third-rank} gives a contribution of $2$ to the rank of the skew Euler form from each singular point.

The weighted-projective stack $\mathcal P_n=\Pj(1,1,n)$, for $n\ge2$, provides the model calculation that reveals how this correction varies.  Its coarse space has one point of type $\frac1n(1,1)$, and the Serre-operator computation in Proposition~\ref{prop:weighted-rank} gives
\[
\rk_{\Q}\chi^-_{\mathcal P_n}=2+n-\gcd(n,2),
\qquad
g_{\mathrm{cat}}(\mathcal P_n)=\left\lfloor\frac{n+1}{2}\right\rfloor.
\]
The rank is the untwisted contribution $2$ of the smooth case plus the local contribution $n-\gcd(n,2)$, recovering the preceding calculation at $n=3$.  What is the corresponding contribution of a cyclic quotient point $\frac1n(1,q)$?

\begin{theorem}[Categorical genus for cyclic quotient surfaces; Theorem~\ref{thm:cyclic-rank}]\label{thm:intro-cyclic}
Let $X$ be a log del Pezzo surface whose singularities are of type $\frac1{n_j}(1,q_j)$, $j=1,\dots,s$, and let $\cX$ be its canonical smooth Deligne--Mumford stack.  Write $w_j=\gcd(n_j,q_j+1)$ for the width and $\ell_j=n_j/w_j$ for the local Gorenstein index.  The rank of its skew Euler form is
\[
\rk_{\Q}\chi^-_{\cX}
=2+\sum_{j=1}^s\bigl(n_j-\gcd(n_j,q_j+1)\bigr),
\]
and hence
\[
g_{\mathrm{cat}}(\cX)
=1+\frac12\sum_{j=1}^s w_j(\ell_j-1).
\]
\end{theorem}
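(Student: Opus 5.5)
The plan is to convert the rank of $\chi^-_{\cX}$ into the rank of a linear operator on $K_0^{\mathrm{num}}(\cX)$ by means of the Serre functor. That operator is then computed sector by sector on the cohomology of the inertia stack, using inertial (Toën) Riemann--Roch.

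\emph{Step 1: reduce to the operator $1-S$.} The Serre functor of $\Db(\coh\cX)$ is $S=-\otimes\omega_{\cX}[2]$, and Serre duality gives $\chi(F,E)=\chi(E,S F)$. Since the shift $[2]$ acts trivially on $K_0$, we get
\[
\chi^-_{\cX}(E,F)=\chi(E,F)-\chi(F,E)=\chi\bigl(E,(1-T)F\bigr),\qquad T=[\omega_{\cX}\otimes-].
\]
The Euler pairing is nondegenerate on $K_0^{\mathrm{num}}(\cX)_{\Q}$ by definition. Hence $\rk_{\Q}\chi^-_{\cX}=\rk(1-T)$, and this rank may be computed after extending scalars to $\C$.

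\emph{Step 2: decompose by inertia.} I will use the orbifold Chern character
\[
K_0^{\mathrm{num}}(\cX)_{\C}\xrightarrow{\ \sim\ } H^*(I\cX,\C)=H^*(X,\C)\oplus\bigoplus_{j=1}^s\bigoplus_{k=1}^{n_j-1}\C\cdot\1_{p_j,k}.
\]
Here the twisted sectors are the points $B\mu_{n_j}$ indexed by the nontrivial elements $\zeta^k\in\mu_{n_j}$. This map is an isomorphism for a smooth proper surface stack with isolated cyclic stabilizers, by inertial Riemann--Roch together with a dimension count: $\dim K^{\mathrm{num}}_{\Q}=2+\rho+\sum_j(n_j-1)$. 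I expect this to be the main point needing care. One must check that numerical equivalence does not kill any twisted classes, for instance by exhibiting skyscraper sheaves twisted by the characters of $\mu_{n_j}$ at $p_j$, whose orbifold characters span the local sectors.

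Under this isomorphism, tensoring by the line bundle $\omega_{\cX}$ preserves each sector. On the untwisted sector it acts by $\exp(c_1(K_X))$. On the sector $(p_j,k)$ it acts by the scalar $\lambda_{j,k}$, the value at $\zeta^k$ of the character of $\mu_{n_j}$ on the fiber of $\omega_{\cX}$. With the local action $\zeta\cdot(x,y)=(\zeta x,\zeta^{q_j}y)$, this gives $\lambda_{j,k}=\zeta^{-k(1+q_j)}$.

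\emph{Step 3: compute the rank sector by sector.} On $H^0\oplus H^2\oplus H^4$ of the coarse space, $1-T=-N$ with $N=\exp(K)-1$ nilpotent. The operator $N$ sends $H^0$ onto $\C K$, sends $H^2$ to $H^4$ by $D\mapsto (K\cdot D)$, and kills $H^4$. Since $K\neq0$ (indeed $K^2>0$, because $X$ is log del Pezzo), $N$ has rank exactly $2$.

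On the sector $(p_j,k)$, the operator $1-T$ vanishes iff $n_j\mid k(q_j+1)$. This holds iff $\ell_j=n_j/\gcd(n_j,q_j+1)$ divides $k$. Among $1\le k\le n_j-1$ there are $w_j-1$ such $k$, so point $p_j$ contributes $(n_j-1)-(w_j-1)=n_j-w_j$.

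Summing the contributions gives $\rk_{\Q}\chi^-_{\cX}=2+\sum_j(n_j-w_j)$. Since $n_j-w_j=w_j(\ell_j-1)$, halving yields $g_{\mathrm{cat}}(\cX)=1+\frac12\sum_j w_j(\ell_j-1)$. As a consistency check, the case $n=3$, $q=1$ gives $w=1$ and contribution $2$, matching Proposition~\ref{prop:third-rank}, and the case $\Pj(1,1,n)$ matches Proposition~\ref{prop:weighted-rank}.
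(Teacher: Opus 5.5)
Your proposal is correct and is essentially the paper's proof. Lemma~\ref{lem:skew-serre} reduces the skew rank to $\rk(1-S_{\cX})$, and Proposition~\ref{prop:numerical-inertia} splits numerical $K$-theory by inertia. Lemmas~\ref{lem:untwisted-rank} and~\ref{lem:local-cyclic-rank} then give the untwisted rank $2$ from $K_X^2>0$ and the local count $n_j-w_j$ from the eigenvalue $\zeta^{-k(1+q_j)}$, exactly as you do.

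Your sketch is looser only at the numerical isomorphism you flag. The dimension $2+\rho+\sum_j(n_j-1)$ cannot serve as an input, since it is not known in advance; it should come out as a consequence. The paper works in Chow groups, where Edidin's theorem makes $\widetilde{\operatorname{ch}}:K_0(\cX)_{\C}\to A^*(I\cX)_{\C}$ an isomorphism. It then writes $\chi(v,w)=\int_{I\cX}\widetilde{\operatorname{ch}}(v)^\dagger\widetilde{\operatorname{ch}}(w)\,\mathcal T$ with $\mathcal T$ a unit, which shows that the Euler radical corresponds exactly to the intersection radical (Lemma~\ref{lem:app-numerical}). Your twisted-skyscraper argument only gives surjectivity, so it should be paired with this nondegeneracy rather than with a dimension count.
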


The local rule is $(n-\gcd(n,q+1))/2=w(\ell-1)/2$ per singular point, in addition to the global contribution one.  It is useful to distinguish three invariants from the outset.  Write $w=m\ell+w_0$, $0\le w_0<\ell$; intuitively, $w_0$ is the width left after all locally smoothable content has been removed.  With the precise terminology deferred to Section~\ref{sec:mirror}, the table anticipates the residual formula and mirror comparison of Theorem~\ref{thm:intro-mirror}:
\begin{center}
\renewcommand{\arraystretch}{1.25}
\begin{tabular}{@{}lll@{}}
\hline
Invariant & What it records & Value\\\hline
$g_{\mathrm{cat}}(\cX)$ & the given canonical stack
& $1+\frac12\sum_j w_j(\ell_j-1)$\\
$g_{\mathrm{cat}}^{\mathrm{qG}}(X)$ & a general $\Q$-Gorenstein deformation
& $1+\frac12\sum_j w_{0,j}(\ell_j-1)$\\
$g_{\mathrm{MMLP}}$ & a general mirror fiber
& $g_{\mathrm{cat}}^{\mathrm{qG}}(X)$\textsuperscript{$\ast$}\\\hline
\end{tabular}
\end{center}
\noindent\textsuperscript{$\ast$}For a surface admitting a toric $\Q$-Gorenstein degeneration with polygon $P$, the last equality holds for a general maximally mutable Laurent polynomial with Newton polygon $P$, as in Theorem~\ref{thm:intro-mirror}.  Here $R$ denotes the residual lattice-point set of the polygon.  For a toric surface $X_P$, the first row is $\#(P^\circ\cap N)$ and the other two are $1+\#(R\cap P^\circ)$; the all-interior-point interpretation is specifically toric.  Cyclic Du Val points contribute zero.  Non-Du Val $T$-singularities contribute to the given stack but disappear from its categorical genus under general $\Q$-Gorenstein deformation.  Theorem~\ref{thm:intro-mirror} quantifies this loss and identifies the residual value with mirror genus, whether or not $X$ is locally $\Q$-Gorenstein rigid.

The formula separates the untwisted rank-$2$ contribution, already present for every smooth del Pezzo surface by Corollary~\ref{ex:smooth-del-pezzo}, from the local cyclic corrections.  In the proof of Theorem~\ref{thm:cyclic-rank}, Serre duality identifies the rank of the skew Euler form with $\rk(1-S)$, where $S$ is the Serre operator, and inertial Riemann--Roch separates the untwisted and twisted sectors.  At $\frac1n(1,q)$, the canonical character on the sector indexed by $i$ has eigenvalue $\zeta^{-i(q+1)}$, for a primitive $n$th root $\zeta$; counting the fixed sectors produces $\gcd(n,q+1)$.

This local formula admits a second interpretation in terms of age.  Write
\[
H^{\operatorname{age}<1}_{\cX}
:=\bigoplus_{0<\operatorname{age}(\gamma)<1}\C\1_\gamma
\]
for the space with one basis vector for each nontrivial twisted sector $\gamma$ of the inertia stack $I\cX$ of age less than one.

\begin{theorem}[The age interpretation; Theorem~\ref{thm:age-interpretation}]\label{thm:intro-age}
For the canonical stack $\cX$ of a log del Pezzo surface with cyclic quotient singularities,
\[
g_{\mathrm{cat}}(\cX)=1+\dim H^{\operatorname{age}<1}_{\cX}.
\]
\end{theorem}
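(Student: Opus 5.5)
Given Theorem~\ref{thm:intro-cyclic}, it suffices to show that for each singular point $\frac1n(1,q)$ the number of nontrivial twisted sectors of age strictly less than one equals $\frac12\bigl(n-\gcd(n,q+1)\bigr)$. The statement then follows by summing over $j$ and adding the global contribution one. The proof therefore reduces to a purely local count on the inertia stack, with no further categorical input.

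Fix a point of type $\frac1n(1,q)$ with $\gcd(n,q)=1$. The inertia stack has one twisted sector for each $i=1,\dots,n-1$, corresponding to $g^i$ with $g$ acting by $(\zeta,\zeta^q)$. Its age is
\[
a(i)=\frac{i}{n}+\Bigl\{\frac{iq}{n}\Bigr\},
\]
where the first term uses $0<i<n$ and $\{\cdot\}$ denotes the fractional part. The key step is the involution $i\mapsto n-i$ on the nontrivial sectors. Since $\gcd(n,q)=1$, the quantity $iq/n$ is never an integer. Both eigenvalues of $g^i$ are therefore nontrivial, and the standard identity
\[
a(i)+a(n-i)=\dim=2
\]
holds. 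Hence either $a(i)<1<a(n-i)$, or $a(i)=a(n-i)=1$. The number of sectors with age less than one is thus exactly half of the number of sectors whose age is different from one.

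It remains to count the sectors with $a(i)=1$. Since $a(i)\equiv i(1+q)/n \pmod 1$, and $0<a(i)<2$, the condition $a(i)=1$ is equivalent to $n\mid i(q+1)$. This holds exactly when $\ell=n/w$ divides $i$. These are the sectors on which the canonical character is trivial, consistent with the fixed-sector count $\zeta^{-i(q+1)}=1$ in the proof of Theorem~\ref{thm:cyclic-rank}. Among $1\le i\le n-1$ there are $w-1$ such values.

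Putting this together, the number of sectors of age less than one is
\[
\frac{(n-1)-(w-1)}{2}=\frac{n-w}{2}=\frac{w(\ell-1)}{2}.
\]
This is precisely the local term in Theorem~\ref{thm:intro-cyclic}. Summing over $j$ gives $g_{\mathrm{cat}}(\cX)=1+\dim H^{\operatorname{age}<1}_{\cX}$.

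The only point requiring care is the bookkeeping of the twisted sectors. One must check that in the canonical stack the nontrivial sectors over distinct singular points are disjoint, and that each $g^i$ with $i\ne0$ gives exactly one sector. Both follow from the fact that the stabilizers are isolated and cyclic, with the smooth locus contributing only the untwisted sector. The age symmetry $a(i)+a(n-i)=2$ is where the hypothesis $\gcd(n,q)=1$ enters. I expect no serious obstacle beyond these routine checks.
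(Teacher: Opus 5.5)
Your proof is correct and follows the paper's route: the paper's Lemma~\ref{lem:age-count} also pairs the sectors $i$ and $n-i$ using $\operatorname{age}(i)+\operatorname{age}(n-i)=2$. It shows that $\operatorname{age}(i)=1$ exactly for the $w-1$ solutions of $n\mid i(q+1)$, and then uses Theorem~\ref{thm:cyclic-rank} to identify the summed local count $(n-w)/2$ with $g_{\mathrm{cat}}(\cX)-1$, just as you do (your remaining case split and the self-paired index $i=n/2$ are both covered by the identity $\operatorname{age}(i)+\operatorname{age}(n-i)=2$).
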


Pairing the sectors indexed by $i$ and $n-i$ gives the local age count (Lemma~\ref{lem:age-count}), from which Theorem~\ref{thm:age-interpretation} derives the global identity.  This is a reformulation of the categorical formula, whose proof does not use age.

For a fixed Newton polygon $P$, we denote Tveiten's \emph{mutable genus} by
\[
g_{\mathrm{MMLP}}:=g_{\mathrm{mut}}(P)=1+\#(R\cap P^\circ).
\]
Tveiten proves that this is the genus of the smooth projective model of a generic maximally mutable Laurent-polynomial fiber and is invariant under mutation \cite[Theorem 3.13]{Tveiten}.  We use the formulation of this genus theorem in \cite[Theorem 3.13]{CKPT}.

To identify this mirror invariant categorically for surfaces admitting toric $\Q$-Gorenstein degenerations, one must first remove the smoothable local content.  Precise definitions are collected in Section~\ref{sec:mirror}.  Section~\ref{sec:qg-generic} computes the categorical genus of a sufficiently general deformation and its exact difference from the original genus.  The local lattice count in Lemma~\ref{lem:age-count} then identifies the surviving categorical contribution with mutable genus.

For a log del Pezzo surface $X$ with cyclic quotient singularities, let $\mathcal X_t$ be the canonical stack of a sufficiently general fiber of a miniversal $\Q$-Gorenstein deformation.  Set
\[
g_{\mathrm{cat}}^{\mathrm{qG}}(X):=g_{\mathrm{cat}}(\mathcal X_t).
\]
This residual categorical genus is defined formally in Definition~\ref{def:qg-generic-genus}; independence of the choices is part of the following theorem.

\begin{theorem}[Categorical genus under deformation and mirror genus]\label{thm:intro-mirror}
Let $X$ be a log del Pezzo surface with cyclic quotient singularities and canonical stack $\cX$.  At each singular point, use the widths and indices of Theorem~\ref{thm:intro-cyclic} and write
\[
w_j=m_j\ell_j+w_{0,j},\qquad 0\le w_{0,j}<\ell_j.
\]
The value $g_{\mathrm{cat}}^{\mathrm{qG}}(X)$ is independent of the sufficiently general fiber and the chosen miniversal deformation, and satisfies
\[
\begin{aligned}
g_{\mathrm{cat}}^{\mathrm{qG}}(X)
&=1+\frac12\sum_j w_{0,j}(\ell_j-1),\\
g_{\mathrm{cat}}(\cX)-g_{\mathrm{cat}}^{\mathrm{qG}}(X)
&=\sum_j m_j\binom{\ell_j}{2}.
\end{aligned}
\]
If moreover $X$ admits a toric $\Q$-Gorenstein degeneration, choose one to $X_{P_0}$, and let $P$ be $P_0$ or a mutation-equivalent Fano polygon with the same residual basket.  For a general maximally mutable Laurent polynomial $f$ with Newton polygon $P$ and a general value $\eta$,
\[
g_{\mathrm{cat}}^{\mathrm{qG}}(X)
=1+\#(R\cap P^\circ)
=g\bigl(\widetilde{\{f=\eta\}}\bigr),
\]
where $R$ is the residual set of $P$ and the tilde denotes the smooth projective model of the fiber.
\end{theorem}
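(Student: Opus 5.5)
The proof has three parts: a local deformation analysis feeding the cyclic formula of Theorem~\ref{thm:intro-cyclic}, an arithmetic identity for the drop, and a lattice-point comparison with Tveiten's genus.

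\emph{Step 1: local structure of a general fiber.} A $\Q$-Gorenstein deformation of a log del Pezzo surface is unobstructed globally, since $H^2(T_X)=0$ for $-K_X$ ample. Hence the miniversal $\Q$-Gorenstein deformation surjects onto the product of the local $\Q$-Gorenstein deformation spaces of the singular points. Locally, write $\frac1n(1,q)$ with $n=w\ell$ and $w=m\ell+w_0$. By Kollár--Shepherd-Barron and Akhtar--Kasprzyk, the singularity is a $\Q$-Gorenstein quotient of $\{xy=z^{w\ell}\}$ by $\mu_\ell$, and its general $\Q$-Gorenstein smoothing splits it into $m$ Wahl-type $T$-singularities, which are smoothed, and a residual point $\frac1{w_0\ell}(1,q_0)$ of width $w_0$ and index $\ell$. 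The residual point is $\Q$-Gorenstein rigid, and is smooth when $w_0=0$. So a sufficiently general fiber $X_t$ is a log del Pezzo surface (ampleness of $-K$ is open) whose basket consists exactly of these residual singularities. This basket is determined by the local data alone, independent of the chosen fiber and of the miniversal family. I expect this to be the main obstacle: one must justify that a \emph{sufficiently general} fiber realizes the general point of every local $\Q$-Gorenstein versal space simultaneously, and that the residual index stays $\ell$. For the first point I would use openness of versality together with the surjectivity just noted. The second point follows because the $\mu_\ell$ cover and the Gorenstein index are preserved in $\Q$-Gorenstein families.

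\emph{Step 2: the residual formula and the drop.} Apply Theorem~\ref{thm:intro-cyclic} to the canonical stack $\mathcal X_t$. The residual point has width $w_0$ and index $\ell$, so it contributes $w_0(\ell-1)/2$, which gives the first formula. Subtracting this from Theorem~\ref{thm:intro-cyclic} yields
\[
\tfrac12\sum_j (w_j-w_{0,j})(\ell_j-1)=\tfrac12\sum_j m_j\ell_j(\ell_j-1)=\sum_j m_j\binom{\ell_j}{2}.
\]

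\emph{Step 3: mirror comparison.} A toric $\Q$-Gorenstein degeneration to $X_{P_0}$ preserves the residual basket, since the residual basket is a deformation invariant by Akhtar--Kasprzyk. Mutation also preserves it, so $P$ has cones matching the $X_t$-basket. For each cone of $P$ of width $w$ and height $\ell$, the residual set $R$ contributes exactly $w_0(\ell-1)/2$ interior lattice points; I would establish this using the local lattice count of Lemma~\ref{lem:age-count} on the residual subcone, matching the age-below-one sectors with lattice points under the line at height $1$. Summing over the cones gives $\#(R\cap P^\circ)=\frac12\sum_j w_{0,j}(\ell_j-1)$. The boundary-adjacent bookkeeping needs care, namely that $R\cap P^\circ$ decomposes disjointly over cones. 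The final equality with the genus of the smooth projective model of $\{f=\eta\}$ is Tveiten's theorem \cite[Theorem 3.13]{Tveiten}, in the form of \cite{CKPT}.
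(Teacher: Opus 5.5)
Your proposal is correct and follows the paper's own route. Step~1 is Lemma~\ref{lem:generic-residual-basket} (unobstructedness and formal smoothness of the global-to-local map, then pulling the dense residual locus back to the irreducible base), Step~2 is Proposition~\ref{prop:qg-residual-genus} with Corollary~\ref{cor:t-content-drop}, and Step~3 is Corollary~\ref{cor:qg-generic-mirror-genus} via Lemma~\ref{lem:age-count} and Tveiten's formula. One slip needs correcting: the index-one cover of $\frac1{w\ell}(1,wa-1)$ is $xy=z^{w}$, not $xy=z^{w\ell}$, and its equivariant deformations $xy=z^{w_0}F(z^\ell)$ give the residue $\frac1{w_0\ell}(1,w_0a-1)$, whose width $\gcd(w_0\ell,w_0a)=w_0$ yields index $\ell$ directly; this is cleaner than appealing to preservation of the Gorenstein index, which fails in general (points near a smoothed $T$-part have index one).
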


The mirror assertion of Theorem~\ref{thm:intro-mirror} is therefore the new identification with Tveiten's existing invariant,
\[
\boxed{g_{\mathrm{cat}}^{\mathrm{qG}}(X)=g_{\mathrm{MMLP}}.}
\]
The theorem combines Lemma~\ref{lem:generic-residual-basket}, Proposition~\ref{prop:qg-residual-genus}, and Corollaries~\ref{cor:t-content-drop} and~\ref{cor:qg-generic-mirror-genus}.  Neither the residual formula nor the drop formula requires a toric $\Q$-Gorenstein degeneration; that hypothesis enters only in the mirror comparison.  Under its boundary nondegeneracy hypotheses, Proposition~\ref{prop:boundary-delta} identifies the categorical $T$-content drop with the boundary genus defect already described by Tveiten \cite[Remark 3.14, arXiv version 2]{Tveiten}.  The genus is independent of the chosen toric degeneration, without assuming that two such polygons are mutation equivalent; see the discussion following Corollary~\ref{cor:qg-generic-mirror-genus}.  The locally rigid comparison is an immediate special case.

\begin{corollary}[The locally $\Q$-Gorenstein rigid case]\label{cor:intro-rigid}
With the notation and choices of Theorem~\ref{thm:intro-mirror}, if $X$ is locally $\Q$-Gorenstein rigid and admits a toric $\Q$-Gorenstein degeneration, then
\[
g_{\mathrm{cat}}(\cX)
=g_{\mathrm{cat}}^{\mathrm{qG}}(X)
=g\bigl(\widetilde{\{f=\eta\}}\bigr).
\]
\end{corollary}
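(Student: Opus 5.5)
The corollary follows from Theorem~\ref{thm:intro-mirror} once we show that local $\Q$-Gorenstein rigidity forces the $T$-content drop to vanish. The approach is purely local: translate rigidity into a condition on the integers $m_j$, substitute into the drop formula, and then read off the mirror equality.

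\textbf{Step 1 (rigidity gives $m_j=0$).} Fix a singular point $\frac1{n_j}(1,q_j)$ with width $w_j=\gcd(n_j,q_j+1)$ and Gorenstein index $\ell_j=n_j/w_j$. Write $w_j=m_j\ell_j+w_{0,j}$ as in Theorem~\ref{thm:intro-mirror}. By the Akhtar--Kasprzyk description of singularity content, the germ is $\Q$-Gorenstein deformation equivalent to a residual $R$-singularity together with $m_j$ primitive $T$-singularities. Its base of $\Q$-Gorenstein deformations is smooth of dimension $m_j$, so the germ is $\Q$-Gorenstein rigid exactly when $m_j=0$, that is, when $w_j<\ell_j$. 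This is the standard characterization recalled in Section~\ref{sec:mirror}. Local rigidity of $X$ means every singular point is rigid, so $m_j=0$ for all $j$ and hence $w_{0,j}=w_j$.

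\textbf{Step 2 (vanishing of the drop).} Substituting $m_j=0$ into the second displayed formula of Theorem~\ref{thm:intro-mirror} gives
\[
g_{\mathrm{cat}}(\cX)-g_{\mathrm{cat}}^{\mathrm{qG}}(X)=\sum_j 0\cdot\binom{\ell_j}{2}=0.
\]
As a consistency check, the two closed formulas $1+\frac12\sum_j w_j(\ell_j-1)$ and $1+\frac12\sum_j w_{0,j}(\ell_j-1)$ coincide because $w_{0,j}=w_j$. There is also a conceptual reason: a locally rigid surface has only locally trivial $\Q$-Gorenstein deformations, which preserve the analytic singularity types. The general fiber $X_t$ therefore has the same basket, and Theorem~\ref{thm:intro-cyclic} gives $g_{\mathrm{cat}}(\cX_t)=g_{\mathrm{cat}}(\cX)$ directly.

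\textbf{Step 3 (mirror equality).} Since $X$ admits a toric $\Q$-Gorenstein degeneration, the last display of Theorem~\ref{thm:intro-mirror} applies to $P$ and a general maximally mutable Laurent polynomial $f$. It gives $g_{\mathrm{cat}}^{\mathrm{qG}}(X)=1+\#(R\cap P^\circ)=g\bigl(\widetilde{\{f=\eta\}}\bigr)$. Combining this with Step~2 yields the chain of equalities in the corollary.

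The main obstacle is Step~1, namely identifying local $\Q$-Gorenstein rigidity of $\frac1n(1,q)$ with $w<\ell$. I would prove it by invoking the known computation of the $\Q$-Gorenstein deformation space of a cyclic quotient singularity via its index-one cover, the $A_{w-1}$ singularity $xy=z^{w}$ with $\mu_\ell$-action. Its $\mu_\ell$-invariant deformations are spanned by the monomials $z^{k}$ with $0\le k\le w-2$ and $k\equiv -1 \pmod{\ell}$, giving exactly $m=\lfloor w/\ell\rfloor$ invariant directions. With $m$ parameters, rigidity holds if and only if $m=0$.
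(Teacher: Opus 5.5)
Your argument is correct and is essentially the paper's own. Local $\Q$-Gorenstein rigidity means $m_j=0$ at every singular point, so the drop $\sum_j m_j\binom{\ell_j}{2}$ in Theorem~\ref{thm:intro-mirror} vanishes, and the mirror equality is then the last display of that theorem. The paper also notes that Theorem~\ref{cor:genus-match} proves this case directly.

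One correction concerns your optional justification of Step~1 via the index-one cover. With $\mu_\ell$ acting on $(x,y,z)$ by weights $(1,wa-1,a)$, the equation $xy-z^w$ has weight $wa$. Hence $z^k$ spans an invariant direction exactly when $k\equiv w\pmod\ell$, not $k\equiv-1\pmod\ell$.

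As written, your congruence gives $\lfloor (w-1)/\ell\rfloor$ directions. That is $0$ for the smoothable point $\frac14(1,1)$, so it would wrongly declare that point rigid. The corrected congruence gives $m$ directions for $\ell\ge2$. For Du Val points with $\ell=1$ it gives $w-1$ directions, while $m=w\ge2$ there, so your claim that the base has dimension $m_j$ also needs amending in that case. The equivalence ``rigid $\iff m=0$'' nevertheless holds for every genuine singular point. In any case, the paper simply takes this characterization as recalled in Section~\ref{sec:mirror}, as your Step~1 already does.
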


Indeed, local rigidity means $m_j=0$ at every singular point, so the $T$-content drop vanishes.  Section~\ref{sec:mirror} also proves this case directly in Theorem~\ref{cor:genus-match}.

\begin{corollary}[The toric case]\label{cor:intro-toric}
Let $P\subset N_{\mathbb R}$ be a Fano polygon and let $\mathcal X_P$ be the canonical stack of the toric del Pezzo surface defined by the fan over the faces of $P$.  Then
\[
g_{\mathrm{cat}}(\mathcal X_P)=\#(P^\circ\cap N).
\]
Consequently, for a general Laurent polynomial $h$ with Newton polygon $P$ and a general value $\eta$,
\[
g_{\mathrm{cat}}(\mathcal X_P)=g\bigl(\widetilde{\{h=\eta\}}\bigr).
\]
\end{corollary}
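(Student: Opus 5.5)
The plan is to apply Theorem~\ref{thm:intro-cyclic} to the toric surface $X_P$ and turn the local sum into a lattice-point count on $P$. For a Fano polygon $P$, each edge $E$ of $P$ gives a cone $\sigma_E$ of the fan. That cone is the cyclic quotient singularity $\frac1{n_E}(1,q_E)$, or a smooth point when $n_E=1$. The standard dictionary says that the lattice length of $E$ equals the width $w_E=\gcd(n_E,q_E+1)$, and the lattice height of $E$ above the origin equals the local index $\ell_E=n_E/w_E$. I would prove this with an explicit normal form: put $\sigma_E$ in the shape $\operatorname{cone}\bigl((0,1),(n,-q)\bigr)$. Then the edge joining these primitive generators has lattice length $\gcd(n,q+1)$, computed from the vector $(n,-q-1)$, and the height comes from the determinant $n$ divided by that length.

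Theorem~\ref{thm:intro-cyclic} then gives
\[
g_{\mathrm{cat}}(\mathcal X_P)=1+\tfrac12\sum_E w_E(\ell_E-1).
\]
Smooth cones contribute zero, since there $\ell_E=1$, so the sum may run over all edges. Next I triangulate $P$ into the cones $T_E=\operatorname{conv}(0,E)$. Each $T_E$ has normalized area $w_E\ell_E$, so the total normalized area $2\operatorname{Area}(P)$ equals $\sum_E w_E\ell_E$. The boundary count is $\#(\partial P\cap N)=\sum_E w_E$. Subtracting gives
\[
\sum_E w_E(\ell_E-1)=2\operatorname{Area}(P)-\#(\partial P\cap N).
\]
Pick's theorem says $2\operatorname{Area}(P)=2I+B-2$, where $I=\#(P^\circ\cap N)$ and $B=\#(\partial P\cap N)$. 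Substituting, the right-hand side above is $2I-2$. Therefore $g_{\mathrm{cat}}(\mathcal X_P)=1+(I-1)=I$, which is the first claim.

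For the second claim, I use the classical fact (Khovanskii, or Baker's theorem) that a general Laurent polynomial $h$ with Newton polygon $P$ has a general fiber $\{h=\eta\}$ that is nondegenerate with respect to $P$. The Newton polygon of $h-\eta$ is still $P$, because $0\in P^\circ$. So the smooth projective model of the fiber has genus equal to the number of interior lattice points of $P$, namely $\#(P^\circ\cap N)$. Combining this with the first claim gives the equality.

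The main obstacle is the edge dictionary between $(w_E,\ell_E)$ and the lattice length and height of $E$. Specifically, I must check that the convention $\frac1n(1,q)$ used in Theorem~\ref{thm:intro-cyclic} matches the cone of $E$, rather than its dual or the type $\frac1n(1,q')$ with $qq'\equiv1$. I would settle this by checking that $\gcd(n,q+1)=\gcd(n,q'+1)$ when $qq'\equiv 1 \pmod n$. This holds because $q'(q+1)\equiv 1+q' \pmod n$ and $q'$ is a unit mod $n$. Since $w$ and $n$ are both symmetric under $q\leftrightarrow q'$, the local term $w(\ell-1)$ does not depend on which convention is used. A second point to check is that the toric boundary does not alter the canonical stack, which holds because the stack is determined by the singularities alone.
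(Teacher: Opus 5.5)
Your proof is correct. For the second claim it matches the paper's argument: nondegeneracy of a general $h-\eta$, then Khovanskii's formula. The paper also invokes Bertini for irreducibility, which your citation of the classical genus theorem covers implicitly.

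For the first claim you take a somewhat different route. The paper triangulates $P$ into $\Delta_i=\operatorname{conv}\{0,v_i,v_{i+1}\}$ and applies Lemma~\ref{lem:age-count} cone by cone. That lemma identifies $\#(\Delta_i^\circ\cap N)$ with the local correction $(n_i-w_i)/2$ through an explicit correspondence with the twisted sectors of age less than one. The paper then notes that the radial segments $[0,v_i]$ carry no lattice points other than $0$ and $v_i$. Hence $P^\circ\cap N$ is the origin together with the disjoint union of the triangle interiors.

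You instead translate $(w_E,\ell_E)$ into the lattice length and height of the edge and apply Pick's theorem once to the whole polygon. This uses only $\sum_E n_E=2\operatorname{Area}(P)$ and $\sum_E w_E=\#(\partial P\cap N)$. Your version is shorter and needs no disjointness argument for the radial segments, but it yields only an equality of totals. The paper's local count carries the age interpretation point by point. It is also the form reused later, for residual triangles of a crepant subdivision in the mirror comparison.

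Your check that $\gcd(n,q+1)=\gcd(n,q'+1)$ when $qq'\equiv1 \pmod n$ soundly disposes of the orientation ambiguity. The paper sidesteps that ambiguity by fixing $N=\Z^2+\Z\frac1n(1,q)$ in the proof of Lemma~\ref{lem:age-count}.
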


This is Proposition~\ref{prop:toric-categorical-genus} together with Corollary~\ref{cor:toric-laurent-genus}.  A general Laurent polynomial records all interior points, whereas a general maximally mutable Laurent polynomial records only the residual points and the origin.  Thus the hierarchy runs from the categorical genus of the given stack, through its age interpretation, to the $\Q$-Gorenstein generic residual genus and then to mirror genus.

The surface $X=\Pj(1,1,4)$ shows why passing to a general $\Q$-Gorenstein deformation is necessary even for toric surfaces.  Its canonical stack has categorical genus two, but its $\frac14(1,1)$ point is a smoothable $T$-singularity, so $g_{\mathrm{cat}}^{\mathrm{qG}}(X)=1$, the genus of a general maximally mutable Laurent polynomial fiber.  The extra contribution belongs to the given stack and disappears under a sufficiently general $\Q$-Gorenstein deformation.  Local rigidity eliminates this distinction.

For the Corti--Heuberger surfaces, the categorical formula gives $g_{\mathrm{cat}}=k+1$ throughout all $29$ deformation families \cite{CH}.  Their $\frac13(1,1)$ points are locally $\Q$-Gorenstein rigid, and exactly $26$ families admit toric $\Q$-Gorenstein degenerations \cite[Theorem 8]{ACC+}; on these, $k+1$ is the genus of a general maximally mutable Laurent polynomial fiber.  In particular, the case of $X_{10}$ recovers the motivating genus-two prediction.

There is concrete mirror evidence beyond this genus-two case.  Oneto--Petracci \cite[Theorem 2.2]{OP} compare classical periods of explicit maximally mutable Laurent polynomials with restrictions of quantum periods for del Pezzo surfaces with $\frac13(1,1)$ points admitting toric $\Q$-Gorenstein degenerations.  Their result is conditional on the orbifold Quantum Lefschetz and Abelian/non-Abelian conjectures stated in \cite[Conjectures 5.2 and 6.1]{OP}.  Their polynomials predate categorical genus and provide independent models on which to test its numerical meaning.  Representative mirror families with $k=1,2,3,4,6$ give genera $2,3,4,5,7$, respectively, agreeing with $g_{\mathrm{cat}}=k+1$.  These direct checks complement the general maximally mutable Laurent polynomial genus theorem; the six-point case also illustrates why the Newton polygon and generality matter.  Petracci's toric example \cite{PetracciKmoduli} distinguishes the genus of the actual stack from that of a generic $\Q$-Gorenstein deformation.  Section~\ref{sec:qg-generic} measures this difference exactly by the $T$-content and identifies the toric categorical genus with the full interior-lattice-point count.

Section~\ref{sec:categorical} introduces categorical genus through the smooth case and the special McKay calculation for $\frac13(1,1)$ singularities.  Section~\ref{sec:serre} develops the intrinsic Serre-operator method and proves the cyclic quotient formula.  Section~\ref{sec:age} identifies the local corrections with age and lattice-point counts.  Section~\ref{sec:mirror} introduces residual singularities and proves the locally rigid mirror comparison.  Section~\ref{sec:qg-generic} completes Theorem~\ref{thm:intro-mirror} by proving the residual formula, the exact $T$-content drop, and the general mirror comparison; it then interprets the drop through boundary singularities.  Section~\ref{sec:explicit-mirrors} collects the explicit mirror curves, deformation examples, and the scope of the comparison.  Appendix~\ref{app:numerical-inertia} supplies the numerical inertial Riemann--Roch argument.

\section{Categorical genus and the geometric model}\label{sec:categorical}

We begin with the Euler pairing on a smooth surface and use its antisymmetric part to define categorical genus.  The special McKay correspondence then gives a geometric calculation for $\frac13(1,1)$ singularities.

We work over $\C$; all stacks considered are separated Deligne--Mumford stacks of finite type, hence tame.  The geometric setting is the same as in the author's earlier study of exceptional collections on canonical stacks of log del Pezzo surfaces with $\frac13(1,1)$ singularities \cite{GS}.

\begin{definition}[Log del Pezzo surfaces and canonical stacks]
A log del Pezzo surface is a normal projective surface $X$ with quotient singularities and ample $-K_X$.  Its basket of singularities is the multiset of its singularity types, counted with multiplicity.  Its canonical stack $\cX$ is the smooth Deligne--Mumford stack with coarse space $X$ and no stabilizers away from the singular points.
\end{definition}

\begin{definition}[Euler pairing and numerical Grothendieck group]
For a smooth proper Deligne--Mumford stack $\mathcal Z$, the Euler pairing and its antisymmetric part are
\[
\chi_{\mathcal Z}(E,F)
=\sum_i(-1)^i\dim\Ext^i(E,F),
\qquad
\chi_{\mathcal Z}^-(E,F)
:=\chi_{\mathcal Z}(E,F)-\chi_{\mathcal Z}(F,E).
\]
We use $K_0^{\mathrm{num}}(\mathcal Z)=K_0(\mathcal Z)/\operatorname{rad}\chi_{\mathcal Z}$.
\end{definition}

The left and right radicals agree: Serre duality gives $\chi(v,u)=\chi(u,Sv)$, where $S$ is the automorphism induced by the Serre functor.  Thus vanishing against every class in either argument is equivalent to vanishing in the other.

Unless another coefficient field is indicated, all Grothendieck groups in this section are tensored with $\Q$.  For a smooth projective surface $Y$, Kuznetsov \cite[Example 3.5]{Kuz} describes the numerical group as
\[
K_0^{\mathrm{num}}(Y)_{\Q}
\simeq
\Q\oplus\NS(Y)_{\Q}\oplus\Q,
\]
with coordinates represented by rank, first Chern class, and a degree-two class.  Hirzebruch--Riemann--Roch shows that only the first two coordinates enter the skew form.

\begin{lemma}[The smooth-surface skew form]\label{lem:RR-skew}
For $A,B\in K_0(Y)_{\Q}$, write $r_A=\rk(A)$ and $D_A=c_1(A)$, with analogous notation for $B$.  Their skew Euler pairing is
\begin{equation}\label{eq:smooth-skew}
\chi_Y^-(A,B)
=-K_Y\cdot\bigl(r_A D_B-r_BD_A\bigr).
\end{equation}
\end{lemma}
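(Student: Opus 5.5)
The plan is to apply Hirzebruch--Riemann--Roch to each Euler characteristic, $\chi_Y(A,B)=\chi(Y,A^\vee\otimes B)$, and then subtract. Throughout, write $\operatorname{td}(Y)=1-\tfrac12K_Y+\tfrac{1}{12}(K_Y^2+c_2(Y))$. The Chern characters are
\[
\operatorname{ch}(A)=r_A+D_A+\operatorname{ch}_2(A),\qquad
\operatorname{ch}(A^\vee)=r_A-D_A+\operatorname{ch}_2(A).
\]
Dualization flips the sign of the degree-one part and preserves the degree-two part. By multiplicativity of the Chern character,
\[
\operatorname{ch}(A^\vee)\operatorname{ch}(B)
=r_Ar_B+(r_AD_B-r_BD_A)+\bigl(r_A\operatorname{ch}_2(B)+r_B\operatorname{ch}_2(A)-D_A\cdot D_B\bigr).
\]

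Taking the degree-two part of this product times $\operatorname{td}(Y)$ gives
\[
\chi_Y(A,B)=r_Ar_B\,\chi(\mathcal O_Y)-\tfrac12K_Y\cdot(r_AD_B-r_BD_A)+r_A\operatorname{ch}_2(B)+r_B\operatorname{ch}_2(A)-D_A\cdot D_B.
\]
Next I swap $A$ and $B$ and subtract the two expressions. The terms $r_Ar_B\chi(\mathcal O_Y)$, $r_A\operatorname{ch}_2(B)+r_B\operatorname{ch}_2(A)$ and $D_A\cdot D_B$ are symmetric, so they cancel. The linear term $r_AD_B-r_BD_A$ is antisymmetric, so its contribution doubles:
\[
\chi^-_Y(A,B)=-K_Y\cdot(r_AD_B-r_BD_A).
\]
This is \eqref{eq:smooth-skew}. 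Everything is bilinear, so the argument extends from vector bundles to all of $K_0(Y)_{\Q}$. Every class on a smooth projective surface has a finite locally free resolution, so it suffices to check the formula on bundles.

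No step is a genuine obstacle; the computation is routine. The only point needing care is the sign convention $\chi(A,B)=\chi(A^\vee\otimes B)$, which places the dual on the first argument and fixes the overall sign. A cross-check via Serre duality confirms it: $\chi(B,A)=\chi(A,B\otimes\omega_Y)$, and twisting by $\omega_Y$ shifts $D_B$ by $r_BK_Y$, which produces the same antisymmetric term.

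The formula also shows that the degree-two coordinate of Kuznetsov's decomposition never enters the skew form, as remarked before the lemma.
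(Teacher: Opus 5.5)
Your proposal is correct and follows essentially the same route as the paper: apply Hirzebruch--Riemann--Roch to $\operatorname{ch}(A^\vee)\operatorname{ch}(B)\operatorname{td}(Y)$, observe that the degree-two part of the product and the $\operatorname{td}_2$ contribution are symmetric, and let the antisymmetric degree-one term $r_AD_B-r_BD_A$ pair with $\operatorname{td}_1(Y)=-K_Y/2$ and double under antisymmetrization. Your explicit expansion, the reduction to vector bundles, and the Serre-duality cross-check are all sound additions.
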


\begin{proof}
Hirzebruch--Riemann--Roch gives
\[
\chi_Y(A,B)=\int_Y
\operatorname{ch}(A^\vee)\operatorname{ch}(B)\operatorname{td}(Y).
\]
The degree-two part of $\operatorname{ch}(A^\vee)\operatorname{ch}(B)$ is the symmetric expression
\[
r_A\operatorname{ch}_2(B)+r_B\operatorname{ch}_2(A)-D_A D_B.
\]
The contribution involving $\operatorname{td}_2(Y)$ is also symmetric, so both cancel upon antisymmetrization.  The remaining term comes from $\operatorname{td}_1(Y)=-K_Y/2$.  Since the degree-one component of
\[
\operatorname{ch}(A^\vee)\operatorname{ch}(B)
-\operatorname{ch}(B^\vee)\operatorname{ch}(A)
\]
is $2(r_A D_B-r_BD_A)$, integration gives \eqref{eq:smooth-skew}.
\end{proof}

\begin{corollary}[The smooth del Pezzo rank]\label{ex:smooth-del-pezzo}
Every smooth del Pezzo surface $Y$ satisfies
\begin{equation}\label{eq:smooth-rank-two}
\rk_{\Q}\chi_Y^-=2.
\end{equation}
\end{corollary}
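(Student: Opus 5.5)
We need to prove that the skew Euler form on $K_0^{\mathrm{num}}(Y)_{\Q}$ has rank exactly $2$, using Lemma~\ref{lem:RR-skew}. By \eqref{eq:smooth-skew}, $\chi_Y^-$ factors through the linear map
\[
\phi\colon K_0^{\mathrm{num}}(Y)_{\Q}\to\Q^2,\qquad A\mapsto\bigl(r_A,\,-K_Y\cdot D_A\bigr),
\]
and equals the pullback of the standard alternating form $(x_1,y_1),(x_2,y_2)\mapsto x_1y_2-x_2y_1$ on $\Q^2$. The plan is to show that $\phi$ is well defined on the numerical group and surjective. Then $\chi^-_Y=\phi^*\omega$ with $\omega$ nondegenerate on $\Q^2$, so its rank is $\dim\Q^2=2$.

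For well-definedness, I will use Kuznetsov's identification $K_0^{\mathrm{num}}(Y)_{\Q}\simeq\Q\oplus\NS(Y)_{\Q}\oplus\Q$ quoted above. Rank and the numerical class of $c_1$ are among its coordinates, so $\phi$ descends. Alternatively, \eqref{eq:smooth-skew} itself shows that $\chi^-_Y$ vanishes on the radical of $\chi_Y$, since that radical is contained in the radical of the antisymmetrization; no extra argument is needed here.

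For surjectivity, I will test two explicit classes. The class $[\mathcal O_Y]$ maps to $(1,0)$. A skyscraper-free choice for the second coordinate is $[\mathcal O_Y(-K_Y)]-[\mathcal O_Y]$, which has rank $0$ and $c_1=-K_Y$, so it maps to $(0,K_Y^2)$. Since $Y$ is del Pezzo, $K_Y^2>0$ because $-K_Y$ is ample. Evaluating the pairing directly gives
\[
\chi^-_Y\bigl(\mathcal O_Y,\ \mathcal O_Y(-K_Y)-\mathcal O_Y\bigr)=K_Y^2\neq0,
\]
so the rank is at least $2$. The upper bound follows because the form factors through $\Q^2$.

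The only point requiring care is nonvanishing, and it is supplied exactly by ampleness of $-K_Y$, through $K_Y^2=\deg Y\ge1$. Everything else is linear algebra on the factorization of $\chi^-_Y$ through $\phi$.
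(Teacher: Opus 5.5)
Your proof is correct and follows essentially the same route as the paper: both read off from Lemma~\ref{lem:RR-skew} that $\chi_Y^-$ depends only on the two linear functionals $r$ and $K_Y\cdot D$, which gives rank at most two, and both exhibit a rank-one class and a class with $c_1$ proportional to $K_Y$ that pair to $\pm K_Y^2\neq0$. Your test class $[\mathcal O_Y(-K_Y)]-[\mathcal O_Y]$ differs from the paper's $-e_K=(0,-K_Y,0)$ only by a degree-two class in the radical of $\chi_Y^-$, and writing the form as the pullback $\phi^*\omega$ along a surjection onto $\Q^2$ is a repackaging of the paper's $2\times2$ matrix computation.
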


\begin{proof}
In the rational coordinates $(r,D,c)=(\rk,c_1,\deg\operatorname{ch}_2)$, Lemma~\ref{lem:RR-skew} gives
\[
\chi_Y^-\bigl((r,D,c),(r',D',c')\bigr)
=-K_Y\cdot(rD'-r'D).
\]
Thus the degree-two coordinate lies in the radical, and the form has rank at most two because it depends only on the two linear functionals $r$ and $K_Y\cdot D$.  On the span of $e_0=(1,0,0)$ and $e_K=(0,K_Y,0)$ its matrix is
\[
\begin{pmatrix}0&-K_Y^2\\K_Y^2&0\end{pmatrix}.
\]
Since $K_Y^2>0$, this restriction is nondegenerate and the upper bound is attained.
\end{proof}

\begin{definition}[Categorical genus]\label{def:categorical-genus}
For a smooth proper surface stack $\cX$, its \emph{categorical genus} is
\[
\boxed{
g_{\mathrm{cat}}(\cX)
:=\frac12\rk_{\Q}\chi^-_{\cX},}
\]
where the pairing is taken on $K_0^{\mathrm{num}}(\cX)_{\Q}$.  The skew form factors through this numerical quotient, so the rank agrees with that computed on $K_0(\cX)_{\Q}$.
\end{definition}

The rank of the skew Euler form is even, so the definition is integral, and Corollary~\ref{ex:smooth-del-pezzo} gives $g_{\mathrm{cat}}(Y)=1$.  In the log del Pezzo setting, $g_{\mathrm{cat}}-1$ counts the additional two-dimensional nondegenerate summands of the skew form beyond this smooth rank-two contribution.

\begin{remark}[Choice of stack]\label{rem:choice-of-stack}
Categorical genus is an invariant of the chosen stack and its derived category, not of its coarse space alone.  Throughout the basket formulas we use the canonical stack.  Adding root-stack structure can change the inertia and the Euler pairing, so these formulas do not automatically apply to other stacks with the same coarse space.  Nor do we apply Definition~\ref{def:categorical-genus} to the singular coarse surface.  In Section~\ref{sec:qg-generic}, the $\Q$-Gorenstein generic invariant uses the canonical stack of a general deformed surface; it need not equal the invariant of the original canonical stack.
\end{remark}

We next compute the contribution of $\frac13(1,1)$ singularities by the special McKay correspondence.  This provides an independent proof in a special case and connects the invariant with exceptional collections: Proposition~\ref{prop:rank-span} expresses the rank through intersection pairings, and Proposition~\ref{prop:third-rank} evaluates it.  Readers interested only in the general formula may pass directly to the weighted-projective calculation in Proposition~\ref{prop:weighted-rank}.

For this calculation, let $X$ be a log del Pezzo surface with singular points $p_1,\dots,p_r$, all of this type.  Write $f:Y\to X$ for the minimal resolution and $C_i\subset Y$ for the exceptional curve over $p_i$.  The exceptional curves satisfy
\[
C_i\cdot C_j=-3\delta_{ij},
\]
and the canonical classes are related by the discrepancy formula
\begin{equation}\label{eq:discrepancy}
K_Y=f^*K_X-\frac13\sum_{i=1}^r C_i.
\end{equation}
The global cyclic case of the special McKay correspondence \cite[Proposition 8.1]{IU} gives a fully faithful functor $\Phi:\Db(\coh Y)\to\Db(\coh\cX)$ and a semiorthogonal decomposition
\begin{equation}\label{eq:IU-sod}
\Db(\coh\cX)
=\left\langle E_1,\dots,E_r,\Phi\Db(\coh Y)\right\rangle,
\end{equation}
Here $\Phi$ is the integral functor with kernel $\mathcal O_{(Y\times_X\cX)_{\mathrm{red}}}$.  The special McKay correspondence embeds the resolution category into the stack category; the remaining objects are supported at the singular points.

The \emph{special representations} are the trivial representation and those associated with exceptional curves on the minimal resolution; the others are called non-special \cite[Section 2.3]{GR}.

In \eqref{eq:IU-sod}, the exceptional object $E_i$ is the sheaf $(\iota_i)_*\rho_2$ supported on the residual gerbe $\iota_i:\mathcal B\mu_3\hookrightarrow\cX$, with the character convention specified below.  Objects at distinct points are orthogonal because their supports are disjoint, and the order in \eqref{eq:IU-sod} gives $R\!\operatorname{Hom}(\Phi A,E_i)=0$.

\begin{remark}[Character conventions]\label{rem:character-conventions}
In \cite[Section 2.3 and Example 2.7]{GR}, a generator $h$ has cotangent weights $(\zeta,\zeta)$ and $\rho_b(h)=\zeta^b$.  The special representations are $\rho_0,\rho_1$, so the complement uses $\rho_2$.  For the generator $g=h^{-1}$ acting on tangent vectors with weights $(\zeta,\zeta)$, $\rho_2(g)=\zeta^{-2}=\zeta$.  This specifies the sheaf, not just its support, and fixes the adjoint calculation below.
\end{remark}

For the left adjoint $\Psi$ of $\Phi$, the local calculation of Gugiatti--Rota \cite[Example 3.5]{GR}, specialized to $n=3$ and representation index $2$, gives
\[
\Psi(E_i)\simeq\mathcal O_{C_i}(-2).
\]
\begin{lemma}[Euler pairings with residual-gerbe sheaves]\label{lem:local-pairing}
For every $A\in K_0(Y)_{\Q}$, one has
\begin{equation}\label{eq:local-pairing}
\chi_{\cX}(E_i,\Phi A)=-c_1(A)\cdot C_i,
\qquad
\chi_{\cX}(\Phi A,E_i)=0.
\end{equation}
\end{lemma}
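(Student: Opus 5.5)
The second identity, $\chi_{\cX}(\Phi A,E_i)=0$, follows directly from the semiorthogonal decomposition \eqref{eq:IU-sod}. The objects $E_1,\dots,E_r$ precede $\Phi\Db(\coh Y)$, so $R\!\operatorname{Hom}(\Phi A,E_i)=0$ for every $A\in\Db(\coh Y)$. The Euler characteristic of a zero complex vanishes, and this extends by linearity to $K_0(Y)_{\Q}$.

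For the first identity, use the adjunction $\Psi\dashv\Phi$, which gives
\[
R\!\operatorname{Hom}_{\cX}(E_i,\Phi A)\simeq R\!\operatorname{Hom}_Y(\Psi E_i,A),
\]
and hence $\chi_{\cX}(E_i,\Phi A)=\chi_Y(\Psi E_i,A)=\chi_Y(\mathcal O_{C_i}(-2),A)$ by the Gugiatti--Rota computation $\Psi(E_i)\simeq\mathcal O_{C_i}(-2)$ quoted above. It then remains to evaluate this pairing on the smooth surface $Y$ by Hirzebruch--Riemann--Roch, as in the proof of Lemma~\ref{lem:RR-skew}. The class $\mathcal O_{C_i}(-2)$ has rank $0$ and $c_1=C_i$. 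Its $\operatorname{ch}_2$ is computed from $0\to\mathcal O_Y(-C_i)\to\mathcal O_Y\to\mathcal O_{C_i}\to0$ and twisting by a degree $-2$ line bundle on $C_i\cong\Pj^1$:
\[
\operatorname{ch}_2(\mathcal O_{C_i})=-\tfrac12 C_i^2=\tfrac32,\qquad
\operatorname{ch}_2\bigl(\mathcal O_{C_i}(-2)\bigr)=\tfrac32-2=-\tfrac12.
\]

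Write $\operatorname{ch}(A)=(r,D,c)$ and $F=\mathcal O_{C_i}(-2)$. Then $\operatorname{ch}(F^\vee)=(0,-C_i,-\tfrac12)$, since dualization flips the sign of odd-degree parts and keeps $\operatorname{ch}_2$. The degree-two part of $\operatorname{ch}(F^\vee)\operatorname{ch}(A)\operatorname{td}(Y)$ is
\[
-\tfrac12 r-C_i\cdot D+(-rC_i)\cdot\bigl(-\tfrac12K_Y\bigr)
=-\tfrac12 r-C_i\cdot D+\tfrac r2\,C_i\cdot K_Y .
\]
Adjunction on $C_i\cong\Pj^1$ gives $K_Y\cdot C_i=-2-C_i^2=1$. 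The $r$-terms therefore cancel, leaving $\chi_Y(F,A)=-c_1(A)\cdot C_i$, which is the claim. This is consistent with the discrepancy formula \eqref{eq:discrepancy}, since $f^*K_X\cdot C_i=0$.

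The step I expect to need the most care is the bookkeeping of conventions. This means checking that the adjoint used really is the left adjoint, so that $E_i$ sits in the first slot, and that the character $\rho_2$ of Remark~\ref{rem:character-conventions} matches the Gugiatti--Rota twist $-2$ rather than $-1$. A twist of $-1$ would leave a residual $\tfrac r2$ term. I would verify the twist independently by checking $\chi(\mathcal O_{C_i}(-2),\mathcal O_Y)=0$ against $\chi_{\cX}(E_i,\mathcal O_{\cX})=0$: the latter holds because $\rho_2$ is a non-trivial character and $\Phi\mathcal O_Y=\mathcal O_{\cX}$.
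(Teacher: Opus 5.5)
Your proof is correct. The semiorthogonality step and the reduction $\chi_{\cX}(E_i,\Phi A)=\chi_Y(\mathcal O_{C_i}(-2),A)$, via the left adjoint and the Gugiatti--Rota computation, are exactly as in the paper. The difference is in how you evaluate this last pairing.

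The paper reduces to vector bundles and uses Grothendieck duality for the Cartier inclusion $j\colon C_i\hookrightarrow Y$, namely $j^!A=A|_{C_i}\otimes\mathcal O_{C_i}(C_i)[-1]$. This gives the complex-level identification $R\!\operatorname{Hom}_Y(\mathcal O_{C_i}(-2),A)\simeq R\Gamma(C_i,A|_{C_i}(-1))[-1]$, to which it applies Riemann--Roch on $\Pj^1$. You instead apply Hirzebruch--Riemann--Roch on $Y$ with $\operatorname{ch}(\mathcal O_{C_i}(-2))=(0,C_i,-\tfrac12)$ and $K_Y\cdot C_i=1$, and your arithmetic is right.

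Your version needs no reduction to locally free sheaves. It also shows directly that the $\operatorname{ch}_2$-coordinate of $A$ never enters, which is exactly what the block-matrix argument of Proposition~\ref{prop:rank-span} uses, and it parallels Lemma~\ref{lem:RR-skew}. The paper's version computes the Ext groups themselves, not just their Euler characteristic. It also explains the cancellation of the rank term structurally: $C_i^2=-3$ and the twist $-2$ combine into the acyclic $\mathcal O_{\Pj^1}(-1)$.

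Two small corrections to your side remarks. First, with twist $-1$ the leftover term would be $r$, not $\tfrac r2$, since $\operatorname{ch}_2$ shifts by $1$.

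Second, in your sanity check, $\chi_{\cX}(E_i,\mathcal O_{\cX})=0$ does not follow merely from nontriviality of $\rho_2$. That argument gives $\chi_{\cX}(\mathcal O_{\cX},E_i)=0$, the pairing in the opposite order. By Serre duality, $\chi_{\cX}((\iota_i)_*\rho,\mathcal O_{\cX})=\dim(\rho\otimes\omega_{\cX}|_{p_i})^{\mu_3}$. With the conventions of Remark~\ref{rem:character-conventions}, this vanishes for $\rho_2$ (and for $\rho_0$) but equals $1$ for the nontrivial $\rho_1$. So the check does distinguish the twists $-2$ and $-1$, but only through the canonical character, which is precisely the convention you wanted to test.
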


\begin{proof}
Semiorthogonality in \eqref{eq:IU-sod} gives the second equality.  For the first, adjunction yields
\[
\chi_{\cX}(E_i,\Phi A)=\chi_Y(\mathcal O_{C_i}(-2),A).
\]
Since locally free sheaves generate $K_0(Y)$, it suffices by additivity to take $A$ to be a vector bundle.  For the Cartier-divisor inclusion $j:C_i\hookrightarrow Y$, applying $R\mathcal{H}om_Y(-,A)$ to the Cartier divisor triangle
\[
\mathcal O_Y(-C_i)\longrightarrow\mathcal O_Y\longrightarrow j_*\mathcal O_{C_i}
\]
gives
\[
R\mathcal{H}om_Y(j_*\mathcal O_{C_i},A)
\simeq j_*\bigl(A|_{C_i}\otimes\mathcal O_{C_i}(C_i)\bigr)[-1].
\]
By the defining adjunction for $j^!$, this identifies
\[
j^!A=A|_{C_i}\otimes\mathcal O_{C_i}(C_i)[-1]
=A|_{C_i}\otimes\mathcal O_{C_i}(-3)[-1].
\]
Taking $R\!\operatorname{Hom}_{C_i}(\mathcal O_{C_i}(-2),j^!A)$ therefore gives
\[
R\!\operatorname{Hom}_Y(\mathcal O_{C_i}(-2),A)
\simeq
R\Gamma\bigl(C_i,A|_{C_i}\otimes\mathcal O_{C_i}(-1)\bigr)[-1],
\]
If $\delta_A=c_1(A)\cdot C_i$ is the degree of $A|_{C_i}$, Riemann--Roch on $C_i\simeq\Pj^1$ gives
\[
\chi\bigl(C_i,A|_{C_i}(-1)\bigr)=\delta_A.
\]
The shift $[-1]$ changes the sign of the Euler characteristic, proving \eqref{eq:local-pairing}.
\end{proof}

The smooth and local pairing formulas can now be assembled into a block matrix.  The minimal resolution $Y$ is rational: quotient singularities are rational, Kawamata--Viehweg vanishing gives $H^1(Y,\mathcal O_Y)=H^1(X,\mathcal O_X)=0$, and $-K_Y$ is big, so $H^0(Y,2K_Y)=0$.  Castelnuovo's criterion applies, as explained in \cite[Remark 4.3]{GR}.  We use a rational version of Kuznetsov's coordinates \cite[Example 3.5]{Kuz}, replacing the normalized Euler-characteristic coordinate by $\deg\operatorname{ch}_2$.  Explicitly, set
\[
\kappa:K_0(Y)_{\Q}\longrightarrow
\Q\oplus\NS(Y)_{\Q}\oplus\Q,
\qquad
[A]\longmapsto
\bigl(\rk(A),c_1(A),\deg\operatorname{ch}_2(A)\bigr).
\]
The rational Chern-character isomorphism described by Fulton \cite[Chapter 15]{Fulton} gives
\[
\operatorname{ch}_{\Q}:K_0(Y)_{\Q}\xrightarrow{\sim}
\operatorname{CH}^*(Y)_{\Q},
\]
and, since $Y$ is rational,
\[
\operatorname{CH}^0(Y)_{\Q}\cong\Q,
\qquad
\operatorname{CH}^1(Y)_{\Q}\cong\NS(Y)_{\Q},
\qquad
\operatorname{CH}^2(Y)_{\Q}\cong\Q.
\]
Thus $\kappa$ is an isomorphism.  Lemma~\ref{lem:RR-skew} shows that its third coordinate lies in the radical of the smooth skew form, while Lemma~\ref{lem:local-pairing} shows that it contributes nothing to the cross-pairings.

The semiorthogonal decomposition \eqref{eq:IU-sod} induces
\[
K_0(\cX)_{\Q}
\simeq
\bigoplus_{i=1}^r\Q[E_i]\oplus K_0(Y)_{\Q}.
\]
Using this decomposition and the coordinates $\kappa$, define
\[
e_0:=\Phi_*\bigl(\kappa^{-1}(1,0,0)\bigr),
\qquad
e_i:=[E_i]\quad(1\le i\le r).
\]
The rank and residual-gerbe classes span the subspace
\[
U:=\Q e_0\oplus\Q[E_1]\oplus\cdots\oplus\Q[E_r]
\subset K_0(\cX)_{\Q}.
\]
The rank of the skew Euler form can therefore be computed on $U\oplus\NS(Y)_{\Q}$, with the degree-two radical omitted.

\begin{proposition}[Rank from intersection pairings]\label{prop:rank-span}
The skew Euler form satisfies
\[
\rk_{\Q}\chi^-_{\cX}
=2\dim_{\Q}\Span\{K_Y,C_1,\dots,C_r\}.
\]
\end{proposition}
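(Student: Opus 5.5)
We compute the skew Euler form in block form on $U\oplus\NS(Y)_{\Q}$, where $\NS(Y)_{\Q}$ is embedded via $D\mapsto\Phi_*\kappa^{-1}(0,D,0)$. The degree-two coordinate is dropped because it lies in the radical: by Lemma~\ref{lem:RR-skew} it does not enter the smooth skew form, and by Lemma~\ref{lem:local-pairing} it does not enter the cross-pairings. Since $\Phi$ is fully faithful, $\chi_{\cX}(\Phi A,\Phi B)=\chi_Y(A,B)$. Lemma~\ref{lem:RR-skew} then gives the restriction of $\chi^-$ to the image of $K_0(Y)_{\Q}$:
\[
\chi^-(e_0,D)=-K_Y\cdot D,\qquad \chi^-(D,D')=0 .
\]
For the local classes, Lemma~\ref{lem:local-pairing} gives $\chi^-(E_i,\Phi A)=-c_1(A)\cdot C_i$. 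Hence $\chi^-(E_i,e_0)=0$ and $\chi^-(E_i,D)=-D\cdot C_i$. Distinct $E_i$ have disjoint supports, and each $E_i$ is exceptional, so $\chi^-(E_i,E_j)=0$ for all $i,j$.

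So $\chi^-$ vanishes identically on $U$ and on $\NS(Y)_{\Q}$. It is therefore the antisymmetric form built from the cross-pairing
\[
\beta:U\times\NS(Y)_{\Q}\to\Q,\qquad
\beta(e_0,D)=-K_Y\cdot D,\quad \beta(e_i,D)=-C_i\cdot D .
\]
In matrix terms it is $\begin{pmatrix}0&B\\-B^{\mathsf T}&0\end{pmatrix}$, which has rank $2\rk B$. The rank of $B$ equals the dimension of the image of $U\to\NS(Y)_{\Q}^\vee$, $u\mapsto\beta(u,-)$. This image is spanned by the functionals $-(K_Y\cdot -)$ and $-(C_i\cdot -)$.

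To finish, we identify this image with $\Span\{K_Y,C_1,\dots,C_r\}$ using the intersection pairing. The pairing on $\NS(Y)_{\Q}$ is nondegenerate by the Hodge index theorem, or simply because $\NS$ is numerical equivalence. Therefore $D\mapsto(D\cdot -)$ is injective, and the dimension of the span of the functionals equals $\dim\Span\{K_Y,C_1,\dots,C_r\}$.

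The main point needing care is the reduction to $U\oplus\NS(Y)_{\Q}$. We must check that $K_0(\cX)_{\Q}=U\oplus\NS(Y)_{\Q}\oplus\Q\,c$, where $c$ is the image of $\kappa^{-1}(0,0,1)$, and that $c$ pairs to zero skew-symmetrically with everything. The decomposition follows from \eqref{eq:IU-sod} together with $\kappa$ being an isomorphism. For $c$, its skew pairing with $\Phi$-classes vanishes by Lemma~\ref{lem:RR-skew}, and its pairing with each $E_i$ is $-c_1\cdot C_i=0$ in both orders, by Lemma~\ref{lem:local-pairing}. With this, the rank computed on the full space equals that of the block matrix. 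No further obstacle is expected.
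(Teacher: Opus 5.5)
Your proposal is correct and is essentially the paper's argument: show that $U$ and $\NS(Y)_{\Q}$ are isotropic, reduce to the off-diagonal block $B$ whose rows are the intersection functionals of $K_Y,C_1,\dots,C_r$, and use nondegeneracy of the intersection pairing to get $\rk_{\Q}\chi^-_{\cX}=2\rk B=2\dim_{\Q}\Span\{K_Y,C_1,\dots,C_r\}$. The differences are cosmetic. You fold into the proof the check that the $\deg\operatorname{ch}_2$ direction lies in the radical, which the paper does just before the proposition. Also, nondegeneracy on $\NS(Y)_{\Q}$ comes from $\NS(Y)_{\Q}$ agreeing with numerical equivalence, not from the Hodge index theorem.
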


\begin{proof}
Lemmas~\ref{lem:RR-skew} and~\ref{lem:local-pairing}, together with the mutual orthogonality of the $E_i$, show that $U$ and $\NS(Y)_{\Q}$ are both isotropic.  Relative to their direct sum, the skew form has matrix
\[
\begin{pmatrix}
0&B\\
-B^{\mathsf T}&0
\end{pmatrix},
\]
where $B:\NS(Y)_{\Q}\to U^\vee$ is defined by $B(D)(u)=\chi^-_{\cX}(u,D)$ and hence is given by
\[
B(D)=-(K_Y\cdot D)e_0^\vee
-\sum_{i=1}^r(C_i\cdot D)e_i^\vee.
\]
Indeed, $\chi^-_{\cX}(e_0,D)=-K_Y\cdot D$ and $\chi^-_{\cX}(E_i,D)=-C_i\cdot D$; reversing the arguments gives the positive signs in the lower-left block.  Thus the rows of $B$ are the negatives of the intersection functionals associated with $K_Y,C_1,\dots,C_r$.  The nondegenerate intersection pairing induces an isomorphism
\[
\NS(Y)_{\Q}\xrightarrow{\sim}\NS(Y)_{\Q}^\vee,
\qquad
L\longmapsto\bigl(D\longmapsto L\cdot D\bigr),
\]
and consequently
\[
\rk(B)=\dim_{\Q}\Span\{K_Y,C_1,\dots,C_r\}.
\]
The image of the block matrix is $\operatorname{Im}(B)\oplus\operatorname{Im}(B^{\mathsf T})$, so its rank is $2\rk(B)$.
\end{proof}

It remains to determine the dimension of this span.  The discrepancy formula and the positivity of $K_X^2$ give the required independence.

\begin{proposition}[The $\frac13(1,1)$ calculation]\label{prop:third-rank}
If $X$ has $r$ singularities of type $\frac13(1,1)$, then
\[
\boxed{
\rk_{\Q}\chi^-_{\cX}=2r+2,
\qquad
g_{\mathrm{cat}}(\cX)=r+1.}
\]
\end{proposition}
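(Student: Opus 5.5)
By Proposition~\ref{prop:rank-span}, it suffices to show that
\[
\dim_{\Q}\Span\{K_Y,C_1,\dots,C_r\}=r+1,
\]
that is, that $K_Y,C_1,\dots,C_r$ are linearly independent in $\NS(Y)_{\Q}$. The claimed value $g_{\mathrm{cat}}(\cX)=r+1$ then follows immediately from Definition~\ref{def:categorical-genus}.

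\textbf{The exceptional curves.} The $C_i$ are independent among themselves, since their intersection matrix is $-3\,\mathrm{Id}_r$ and hence nondegenerate.

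\textbf{The canonical class.} It remains to show that $K_Y$ does not lie in $\Span\{C_1,\dots,C_r\}$. We use the discrepancy formula \eqref{eq:discrepancy}, which gives
\[
K_Y=f^*K_X-\tfrac13\sum_i C_i .
\]
Suppose $K_Y=\sum_i a_iC_i$. Then $f^*K_X$ would lie in the span of the exceptional curves. But $f^*K_X\cdot C_i=0$ for every $i$, because pullbacks are orthogonal to exceptional curves. Since the intersection form restricted to $\Span\{C_i\}$ is negative definite, any class in that span orthogonal to all $C_i$ must be zero. This would force $f^*K_X=0$ in $\NS(Y)_{\Q}$. That contradicts
\[
(f^*K_X)^2=K_X^2>0,
\]
which holds because $-K_X$ is ample.

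\textbf{Conclusion.} The span therefore has dimension $r+1$, and Proposition~\ref{prop:rank-span} gives $\rk_{\Q}\chi^-_{\cX}=2r+2$.

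The only point that needs care is the orthogonality step: the $\Q$-Cartier pullback $f^*K_X$ must meet each $C_i$ trivially and satisfy $(f^*K_X)^2=K_X^2$. Both are standard properties of Mumford pullback on surfaces. Everything else is linear algebra.
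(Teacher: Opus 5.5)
Your proposal is correct and follows essentially the same route as the paper's proof. You reduce to linear independence of $K_Y,C_1,\dots,C_r$ via Proposition~\ref{prop:rank-span}, get independence of the $C_i$ from the matrix $-3I_r$, and rule out $K_Y$ lying in their span using the discrepancy formula, orthogonality of $f^*K_X$ to the $C_i$, and $(f^*K_X)^2=K_X^2>0$.
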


\begin{proof}
The curves $C_i$ are linearly independent because their intersection matrix is $-3I_r$.  Suppose that $K_Y$ belonged to their span.  The discrepancy formula \eqref{eq:discrepancy} would place $f^*K_X$ in the same span, while the projection formula gives
\[
f^*K_X\cdot C_i=0
\]
for every $i$.  The intersection form on $\Span\{C_1,\dots,C_r\}$ is nondegenerate, so this would force $f^*K_X=0$, contradicting
\[
(f^*K_X)^2=K_X^2>0.
\]
Therefore $K_Y,C_1,\dots,C_r$ are linearly independent, and Proposition~\ref{prop:rank-span} gives the result.
\end{proof}

The special McKay argument identifies the additional contributions to the rank geometrically: residual-gerbe objects pair with exceptional curves on the minimal resolution through the intersection-theoretic block matrix.  This description is specific to the $\frac13(1,1)$ calculation and connects it to the exceptional collections in \cite{GS}.

\begin{example}[$X_{10}$]\label{ex:x10-rank}
The motivating hypersurface $X_{10}\subset\Pj(1,2,3,5)$ has one singularity of type $\frac13(1,1)$ \cite[Section 7.3, entry 11]{OP}.  It is the case $r=1$ of Proposition~\ref{prop:third-rank}, which gives
\[
\rk_{\Q}\chi^-_{\cX_{10}}=4,
\qquad
g_{\mathrm{cat}}(\cX_{10})=2.
\]
\end{example}

Section~\ref{sec:serre} replaces this special $\frac13(1,1)$ calculation with an intrinsic Serre-operator argument for arbitrary cyclic quotient singularities, without using the resolution.

\section{The Serre operator and cyclic quotient singularities}\label{sec:serre}

We begin with the smooth weighted-projective stack $\mathcal P_n=\Pj(1,1,n)$, with $n\ge2$, whose coarse space has one point of type $\frac1n(1,1)$.  Its explicit $K$-theory presentation allows us to replace the resolution calculation by the Serre operator.  This is the prototype of the method used for general cyclic quotients.

\begin{proposition}[The weighted-projective calculation]\label{prop:weighted-rank}
For every $n\ge2$,
\[
\boxed{
\rk_{\Q}\chi^-_{\mathcal P_n}
=n+2-\gcd(n,2)
=2+\bigl(n-\gcd(n,2)\bigr).}
\]
Its categorical genus is
\[
\boxed{g_{\mathrm{cat}}(\mathcal P_n)=\left\lfloor\frac{n+1}{2}\right\rfloor.}
\]
\end{proposition}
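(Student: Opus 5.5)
The plan is to use the Serre operator instead of a resolution, since $K_0(\mathcal P_n)$ has an explicit ring presentation. Set $x=[\mathcal O(1)]$. The stack $\mathcal P_n=\Pj(1,1,n)$ has the full strong exceptional collection $\mathcal O,\mathcal O(1),\dots,\mathcal O(n+1)$ (Auroux--Katzarkov--Orlov, or Beilinson's argument for weighted projective stacks). Hence $K_0(\mathcal P_n)$ is free of rank $n+2$ and the Euler pairing is unimodular on it. In particular $K_0^{\mathrm{num}}(\mathcal P_n)=K_0(\mathcal P_n)$. From the Koszul resolution for the weights $(1,1,n)$ we get the presentation
\[
K_0(\mathcal P_n)\cong\Z[x]/\bigl((1-x)^2(1-x^n)\bigr).
\]
The Serre functor is $-\otimes\omega_{\mathcal P_n}[2]$ with $\omega_{\mathcal P_n}=\mathcal O(-n-2)$. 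The shift is even, so on $K_0$ it acts as multiplication by $S=x^{-(n+2)}$.

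Next I turn the rank of the skew form into a rank of an operator. Serre duality gives $\chi(v,u)=\chi(u,Sv)$, so
\[
\chi^-(u,v)=\chi\bigl(u,(1-S)v\bigr).
\]
Since $\chi$ is nondegenerate on $K_0(\mathcal P_n)_{\Q}$, this yields $\rk_{\Q}\chi^-_{\mathcal P_n}=\rk(1-S)$. The rank may be computed after extending scalars to $\C$.

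Then $K_0(\mathcal P_n)_{\C}$ splits by the Chinese remainder theorem according to the roots of $(1-x)^2(1-x^n)$. The root $x=1$ has multiplicity $3$ and gives the local factor $\C[t]/(t^3)$ with $x=1+t$. Each root $x=\zeta^k$, $1\le k\le n-1$, is simple and gives a copy of $\C$; here $\zeta$ is a primitive $n$th root of unity. The operator $1-S$ is multiplication by $1-x^{-(n+2)}$ and respects this splitting.

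On the factor for $x=\zeta^k$ the operator is the scalar $1-\zeta^{-k(n+2)}=1-\zeta^{-2k}$. This vanishes exactly when $n\mid 2k$, which happens for $\gcd(n,2)-1$ values of $k$ in the range $1\le k\le n-1$. On the unipotent factor $\C[t]/(t^3)$ we have
\[
1-x^{-(n+2)}=(n+2)t+O(t^2),
\]
which is a unit times $t$. Multiplication by it therefore has kernel $(t^2)$, of dimension one, and rank $2$. This recovers the smooth rank-two contribution of Corollary~\ref{ex:smooth-del-pezzo}.

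Adding up, the kernel of $1-S$ has dimension $1+(\gcd(n,2)-1)=\gcd(n,2)$, so
\[
\rk_{\Q}\chi^-_{\mathcal P_n}=n+2-\gcd(n,2).
\]
Halving gives $n/2$ for $n$ even and $(n+1)/2$ for $n$ odd, which is $\lfloor (n+1)/2\rfloor$ in both cases.

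The main point needing care is justifying the ring presentation of $K_0$ and the unimodularity of $\chi$ on it, which is what permits replacing $\rk\chi^-$ by $\rk(1-S)$. I would cite the exceptional collection for both. The unipotent block is the only non-diagonal piece, and the leading-term computation above settles it; it uses only that $n+2\neq 0$.
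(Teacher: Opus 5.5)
Your proposal is correct and follows essentially the paper's argument: the toric presentation $K_0(\mathcal P_n)_{\Q}\cong\Q[t^{\pm1}]/\bigl((1-t)^2(1-t^n)\bigr)$, nondegeneracy of $\chi$ from the full exceptional collection of line bundles, the reduction $\rk\chi^-=\rk(1-S)$, and a kernel count for multiplication by $1-[\omega]$. The only difference is cosmetic. You count the kernel root by root via the Chinese remainder theorem, including the unipotent block at $x=1$. The paper instead computes $\deg\gcd\bigl((1-t)^2(1-t^n),\,1-t^{n+2}\bigr)=\gcd(n,2)$, and it notes explicitly that the two counts are equivalent.
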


\begin{proof}
With $t=[\mathcal O(-1)]$, the toric $K$-theory presentation of Borisov--Horja \cite[Theorem 4.10]{BorisovHorja} specializes to
\[
K_0(\mathcal P_n)_{\Q}
\simeq
A_n:=\frac{\Q[t,t^{-1}]}{(1-t)^2(1-t^n)}.
\]
Indeed, the three torus-invariant divisor line bundles are $\mathcal O(1)$, $\mathcal O(1)$, and $\mathcal O(n)$.  Their classes are $t^{-1},t^{-1},t^{-n}$, and the resulting relation $(1-t^{-1})^2(1-t^{-n})=0$ differs from the displayed relation by an invertible Laurent monomial.
Because the defining polynomial has nonzero constant term, $t$ is already invertible in the ordinary polynomial quotient, and hence $\dim_{\Q}A_n=n+2$.  The standard full collection
\[
\bigl(\mathcal O(-n-1),\mathcal O(-n),\dots,\mathcal O\bigr)
\]
has an upper-triangular Euler matrix by semiorthogonality, with diagonal entries one by exceptionality; its determinant is therefore one, so the Euler form is nondegenerate on this $(n+2)$-dimensional space.

Since $\omega_{\mathcal P_n}\simeq\mathcal O(-n-2)$, the Serre operator $S$ acts on $A_n$ by multiplication by $t^{n+2}$.  Serre duality gives
\[
\chi^-(v,w)=\chi\bigl(v,(1-S)w\bigr),
\]
so nondegeneracy of $\chi$ implies
\[
\rk\chi^-=\rk(1-S).
\]
The rank calculation therefore reduces to finding the kernel of multiplication by $1-t^{n+2}$.  In a polynomial quotient $\Q[t]/(F)$, multiplication by $G$ has kernel of dimension $\deg\gcd(F,G)$.  Indeed, if $D=\gcd(F,G)$, then $F\mid Gh$ precisely when $F/D\mid h$, so the kernel consists of the multiples of $F/D$ modulo $F$ and has dimension $\deg D$.  Equivalently, the Chinese remainder theorem sums these kernel dimensions over the primary factors of $F$, including repeated roots.  In our case,
\[
F=(1-t)^2(1-t^n),
\qquad
G=1-t^{n+2}.
\]
Put $\delta=\gcd(n,2)$.  Since $\gcd(n,n+2)=\delta$, the common roots of $1-t^n$ and $1-t^{n+2}$ are exactly the $\delta$th roots of unity.  Although $F$ has a zero of order three at $t=1$, the polynomial $G$ has only a simple zero there.  Therefore
\[
\gcd(F,G)=1-t^\delta
\]
up to a nonzero scalar.  Hence $\dim\ker(1-S)=\delta$, and subtracting from $\dim A_n=n+2$ proves the formula.
\end{proof}

The rank-$2$ term recovers the contribution of the smooth case, while $n-\gcd(n,2)$ is the correction at $\frac1n(1,1)$.  The proof already uses $\rk\chi^-=\rk(1-S)$: the canonical bundle determines the exponent $n+2$, and the degree of the polynomial greatest common divisor counts the fixed subspace.  The local character calculation below will identify this as the $q=1$ case of the correction $n-\gcd(n,q+1)$.

The Serre operator $S_{\cX}$ is the automorphism of numerical $K$-theory induced by the Serre functor.  For a smooth proper surface stack, this functor is $E\mapsto E\otimes\omega_{\cX}[2]$ \cite[Theorem 2.22]{NironiDuality}.  The shift acts trivially on $K_0$, so $S_{\cX}$ is tensor product with $\omega_{\cX}$.  The following lemma expresses the rank of the skew Euler form in terms of this automorphism.

\begin{lemma}[The skew Euler form and the Serre operator]\label{lem:skew-serre}
Let $\cX$ be a smooth proper Deligne--Mumford stack, and suppose that $V=K_0^{\mathrm{num}}(\cX)_{\C}$ is finite-dimensional with nondegenerate Euler pairing $\chi$.  If $S_{\cX}$ is induced by the Serre functor, then
\begin{equation}\label{eq:skew-serre-rank}
\rk\chi^-=\rk(1-S_{\cX}).
\end{equation}
\end{lemma}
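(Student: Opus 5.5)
The plan is to write $\chi^-$ as $\chi$ composed with $1-S_{\cX}$ and then use nondegeneracy of $\chi$ to transfer ranks.

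\emph{Step 1: the Serre identity on numerical classes.} For objects $E,F$ of $\Db(\coh\cX)$, the Serre functor gives natural isomorphisms $\Ext^i(E,F)\simeq\Ext^{-i}(F,\mathbb S E)^\vee$, where $\mathbb S=-\otimes\omega_{\cX}[2]$. Taking alternating sums of dimensions yields $\chi(E,F)=\chi(F,\mathbb S E)$. The shift is even, so on $K_0$ this reads $\chi(u,v)=\chi(v,S_{\cX}u)$. Since $S_{\cX}$ preserves the radical (by the identity itself together with the invertibility of $S_{\cX}$), it descends to an automorphism of $V=K_0^{\mathrm{num}}(\cX)_{\C}$. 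The identity holds there after extending scalars bilinearly.

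\emph{Step 2: rewrite the skew form.} Using Step 1 with the roles adjusted, $\chi(w,v)=\chi(v,S_{\cX}w)$. Therefore
\[
\chi^-(v,w)=\chi(v,w)-\chi(w,v)=\chi\bigl(v,(1-S_{\cX})w\bigr).
\]
So the Gram matrix of $\chi^-$ in any basis equals the Gram matrix of $\chi$ times the matrix of $1-S_{\cX}$, with the convention that operators act in the second slot. Equivalently, the linear map $V\to V^\vee$, $w\mapsto\chi^-(-,w)$, factors as $V\xrightarrow{1-S_{\cX}}V\xrightarrow{\ \chi\ }V^\vee$.

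\emph{Step 3: conclude.} By hypothesis $\chi$ is nondegenerate on the finite-dimensional space $V$. Hence the map $V\to V^\vee$, $u\mapsto\chi(-,u)$, is an isomorphism, and composing with an isomorphism does not change rank. This gives $\rk\chi^-=\rk(1-S_{\cX})$.

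\emph{Expected obstacle.} The only delicate point is bookkeeping: checking which argument of $\chi$ the Serre operator enters, and that $S_{\cX}$ descends to the numerical quotient. Both follow from the displayed Serre identity and the remark in the paper that left and right radicals coincide. In particular, the sign and side conventions do not affect the rank, since $\rk(1-S)=\rk(1-S^{-1})$ because $1-S^{-1}=-S^{-1}(1-S)$. I would also note that rank over $\C$ agrees with rank over $\Q$ for forms defined over $\Q$, which matches the statement with $V=K_0^{\mathrm{num}}(\cX)_{\C}$.
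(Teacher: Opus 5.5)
Your proposal is correct and follows the paper's argument: Serre duality gives $\chi^-(u,v)=\chi\bigl(u,(1-S_{\cX})v\bigr)$, so the map $V\to V^\vee$ defined by $\chi^-$ is $1-S_{\cX}$ composed with the isomorphism $v\mapsto\chi(-,v)$ coming from the nondegenerate $\chi$. Your even-shift remark is needed only to identify $S_{\cX}$ with tensoring by $\omega_{\cX}$ on a surface; the identity $\chi(u,v)=\chi(v,S_{\cX}u)$, and hence the lemma, holds for the induced operator in any dimension.
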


\begin{proof}
Serre duality and the same identity with $u$ and $v$ interchanged give
\[
\begin{aligned}
\chi(u,v)&=\chi(v,S_{\cX}u),\\
\chi^-(u,v)&=\chi\bigl(u,(1-S_{\cX})v\bigr).
\end{aligned}
\]
Thus the map $V\to V^\vee$ defined by the skew form is the composition of $1-S_{\cX}$ with the isomorphism $v\mapsto\chi(-,v)$.  Their ranks agree.
\end{proof}

To apply the lemma, we use the inertia stack to separate the contribution of the surface from those of the stabilizer characters.  We work over $\C$ to diagonalize these characters; scalar extension does not change the ranks over $\Q$.

The inertia stack $I\cX$ parametrizes pairs $(x,g)$ with $g$ a stabilizer automorphism.  The identity component is the untwisted sector; the other connected components are the twisted sectors.

For a canonical stack with isolated cyclic quotient points $\frac1{n_j}(1,q_j)$, every nonidentity stabilizer element fixes only the corresponding point.  Since the stabilizers are abelian, the inertia stack decomposes as
\begin{equation}\label{eq:inertia-components}
I\cX=\cX\sqcup\coprod_j\coprod_{i=1}^{n_j-1}\mathcal B\mu_{n_j}^{(i)}.
\end{equation}
Each gerbe contributes one numerical Chow class after complexification.  To transfer this decomposition to numerical $K$-theory, we fix the Chern-character convention.

\begin{definition}[Inertial Chern character]
On the sector indexed by $g$, decompose the pullback of a vector bundle as $E|_{(g)}=\bigoplus_\lambda E_\lambda$, where $g$ acts on $E_\lambda$ by $\lambda$.  Our convention is
\begin{equation}\label{eq:inertial-character-convention}
\widetilde{\operatorname{ch}}(E)|_{(g)}
=\sum_\lambda\lambda\operatorname{ch}(E_\lambda).
\end{equation}
It evaluates characters at $g$, not at $g^{-1}$; on a point sector it is simply the trace of $g$ on the fiber.
\end{definition}
This agrees with Edidin's twisting operator \cite[Section 4.3.2]{Edidin}: on the inertia component represented by $g$, it weights each eigenbundle by its eigenvalue $\lambda=\lambda(g)$, so no inversion of the sector label occurs in \eqref{eq:inertial-character-convention}.

\begin{proposition}[Numerical inertial decomposition]\label{prop:numerical-inertia}
Let $X$ be a projective surface with isolated cyclic quotient singularities $\frac1{n_j}(1,q_j)$, and let $\cX$ be its canonical stack.  The inertial Chern character induces an $S_{\cX}$-invariant decomposition
\begin{equation}\label{eq:inertial-decomposition}
K_0^{\mathrm{num}}(\cX)_{\C}
\simeq A^*_{\mathrm{num}}(\cX)_{\C}
\oplus\bigoplus_j\bigoplus_{i=1}^{n_j-1}\C_{j,i}.
\end{equation}
The untwisted summand is the numerical Chow group of the smooth stack $\cX$, with its ordinary intersection product.  The line $\C_{j,i}$ is associated with the $i$th nonidentity element of $\mu_{n_j}$.  In these coordinates, $S_{\cX}$ is multiplication by $\widetilde{\operatorname{ch}}(\omega_{\cX})$.
\end{proposition}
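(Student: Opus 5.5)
We will deduce the decomposition from the orbifold (Toën/Kawasaki) Riemann--Roch theorem for smooth proper Deligne--Mumford stacks. It says that the inertial Chern character
\[
\widetilde{\operatorname{ch}}:K_0(\cX)_{\C}\longrightarrow A^*(I\cX)_{\C}
\]
is a ring homomorphism. It also says that the Euler pairing is computed by an integral over $I\cX$ against a twisted Todd class. By the decomposition \eqref{eq:inertia-components}, the target splits as $A^*(\cX)_{\C}\oplus\bigoplus_j\bigoplus_{i=1}^{n_j-1}A^*(\mathcal B\mu_{n_j}^{(i)})_{\C}$, and each gerbe summand is $\C$ in degree zero. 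Precomposing with the appropriate projections gives a candidate map onto the right-hand side of \eqref{eq:inertial-decomposition}.

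First, we show that this map descends to numerical $K$-theory and is an isomorphism there. In Toën's formula the pairing is
\[
\chi(E,F)=\sum_{\text{sectors}}\frac{1}{|\mu|}\int \widetilde{\operatorname{ch}}(E^\vee)\,\widetilde{\operatorname{ch}}(F)\,\widetilde{\operatorname{td}}.
\]
On a gerbe sector $\mathcal B\mu_n^{(i)}$ the integrand is a number: the product of traces, divided by $\det(1-g^{-1})$ on the normal bundle. That determinant is nonzero because the fixed locus is isolated. Hence the Euler form, transported to $A^*(I\cX)_{\C}$, is block diagonal. The untwisted block is the ordinary Hirzebruch--Riemann--Roch pairing on $A^*(\cX)_{\C}$, whose numerical quotient is $A^*_{\mathrm{num}}(\cX)_{\C}$. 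The twisted block is diagonal with nonzero entries, except that the dual $E^\vee$ carries sector $g$ to sector $g^{-1}$; so the twisted block is a nondegenerate pairing matching $(j,i)$ with $(j,n_j-i)$.

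It follows that the kernel of the composite onto the right-hand side lies in the radical of $\chi$. Conversely, nondegeneracy on each block shows that the radical equals this kernel, so the induced map from $K_0^{\mathrm{num}}(\cX)_{\C}$ is injective.

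Next, we prove surjectivity. The untwisted component surjects onto $A^*_{\mathrm{num}}(\cX)_{\C}$, because the ordinary Chern character of the coarse/rational $K$-theory already does. For the twisted sectors we use the sheaves $(\iota_j)_*\rho_b$ supported on the residual gerbes, for $b=0,\dots,n_j-1$. Their restriction to the untwisted sector is a degree-two class, namely a point class divided by $n_j$. Their restriction to sector $(j,i)$ is, by the self-intersection formula, the character $\rho_b(g^i)$ multiplied by the $K$-theoretic Euler class $\lambda_{-1}(N^\vee)$ evaluated at $g^i$. This Euler class is $(1-\zeta^{-i})(1-\zeta^{-iq_j})\neq0$, up to the sign convention. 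Since the character table of $\mu_{n_j}$ is invertible, these sheaves, together with the point class, hit every line $\C_{j,i}$. This local surjectivity is the step I expect to be the main obstacle: it needs careful control of the convention \eqref{eq:inertial-character-convention} and of the Koszul Euler factor. The details are deferred to Appendix~\ref{app:numerical-inertia}.

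Finally, we identify the Serre operator and check invariance. Since $S_{\cX}$ is tensor product with $\omega_{\cX}$, multiplicativity of $\widetilde{\operatorname{ch}}$ shows that it acts as multiplication by $\widetilde{\operatorname{ch}}(\omega_{\cX})$. This multiplication preserves each sector, so the decomposition is $S_{\cX}$-invariant. On $\C_{j,i}$ it acts by the scalar trace of $g^i$ on the fiber of $\omega_{\cX}$. On the untwisted summand it is the unipotent multiplication by $\operatorname{ch}(\omega_{\cX})$.
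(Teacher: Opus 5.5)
Your argument is correct, but it reaches the isomorphism by a different route from the paper's.

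\textbf{The paper's route.} The paper uses the full strength of Edidin's inertial Riemann--Roch theorem. The map $\tau=\widetilde{\operatorname{ch}}(\,\cdot\,)\,\mathcal T$ is an isomorphism $G_0(\cX)_\C\to A^*(I\cX)_\C$ and $\mathcal T$ is a unit, so $\widetilde{\operatorname{ch}}$ is already bijective before any numerical quotient is taken. It then matches the two radicals abstractly: the intersection radical is a graded ideal stable under $\iota^*$, so neither multiplication by $\mathcal T$ nor the involution $\dagger$ moves it. No twisted-sector entry is evaluated and no generators are exhibited.

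\textbf{Your route.} You use Riemann--Roch only as a formula for $\chi$. You compute the twisted block explicitly, as a permuted-diagonal form pairing $(j,i)$ with $(j,n_j-i)$ with nonzero entries. You then replace the isomorphism theorem by a spanning argument. The $n_j$ sheaves $(\iota_j)_*\rho_b$ have sector coordinates $\rho_b(g^i)\,\lambda_{-1}(N^\vee)(g^i)$, with untwisted part $\frac1{n_j}$ times a point class. Invertibility of the character table therefore fills the whole local piece $\C[\mathrm{pt}]\oplus\bigoplus_i\C_{j,i}$.

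\textbf{What each buys.} Your route needs weaker Riemann--Roch input. It also yields explicit generators, namely the residual-gerbe objects of the special McKay picture, and an explicit twisted pairing. In exchange, you must supply by hand several steps the paper gets for free.

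\textbf{Points to fix when writing it up.}
\begin{itemize}
\item Prove surjectivity before the radical comparison. The inclusion of the radical in the kernel tests against the image of your map, so it depends on surjectivity.
\item Spell out the untwisted surjectivity; the phrase ``coarse/rational $K$-theory'' is vague. $A^*_{\mathrm{num}}(\cX)_\C$ is spanned by $[\cX]$, divisor classes and a point class. These are hit by $\mathcal O_\cX$, by $\mathcal O_\cX(D)$ for Weil divisors $D$ on $X$ (Cartier on the smooth stack), and by a skyscraper at a smooth point. Their twisted components are then removed using the local sheaves.
\item Justify that the untwisted block, whose integrand is $\operatorname{ch}(E)^\dagger\operatorname{ch}(F)\operatorname{td}(T_\cX)$, has exactly the intersection radical. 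This is the paper's unit-and-ideal argument.
\item Note that the same ideal property lets multiplication by $\widetilde{\operatorname{ch}}(\omega_\cX)$ descend to the numerical quotient in your Serre step.
\item The hypotheses of Toën's theorem need checking. They hold because the canonical stack is a global quotient via its frame bundle.
\end{itemize}
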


Vistoli's rational coarse-space correspondence \cite[Proposition 6.1]{Vistoli} will identify the untwisted rational Chow groups with those of $X$ for the calculation in Lemma~\ref{lem:untwisted-rank}.

The compatibility of the inertial Chern character with numerical equivalence is proved in Appendix~\ref{app:numerical-inertia}.  For the rank calculation, the consequence is that $1-S_{\cX}$ preserves the summands in \eqref{eq:inertial-decomposition}, and its ranks on them add.

The untwisted computation is the Serre-operator counterpart of Corollary~\ref{ex:smooth-del-pezzo}.  There Hirzebruch--Riemann--Roch computes the skew form directly on a smooth surface; here the same rank-two contribution reappears as the untwisted part of $1-S_{\cX}$.

\begin{lemma}[The untwisted contribution]\label{lem:untwisted-rank}
For the canonical stack of a log del Pezzo surface $X$ with cyclic quotient singularities, the untwisted summand in \eqref{eq:inertial-decomposition} satisfies
\[
\rk\bigl((1-S_{\cX})|_{A^*_{\mathrm{num}}(\cX)_{\C}}\bigr)=2.
\]
\end{lemma}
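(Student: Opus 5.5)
We plan to identify the untwisted summand $A^*_{\mathrm{num}}(\cX)_{\C}$ with the numerical Chow ring of the coarse surface, and then compute $1-S_{\cX}$ there explicitly as multiplication by $1-\operatorname{ch}(\omega_{\cX})$.

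By Vistoli's correspondence \cite[Proposition 6.1]{Vistoli}, the rational Chow groups of $\cX$ agree with those of $X$. Since $X$ has rational singularities and $H^1(X,\mathcal O_X)=0$, the numerical Chow ring is graded:
\[
A^*_{\mathrm{num}}(\cX)_{\C}\simeq\C\oplus\NS(X)_{\C}\oplus\C,
\]
with degrees $0,1,2$. Its intersection product restricts on $\NS(X)_{\C}$ to the $\Q$-valued intersection form of the $\Q$-factorial surface $X$, which is nondegenerate by Hodge index.

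On the untwisted sector, $\widetilde{\operatorname{ch}}(\omega_{\cX})$ is the ordinary Chern character of $\omega_{\cX}$. Its degree-one part is $c_1(\omega_{\cX})$, which corresponds under the coarse identification to $K_X$, since the canonical stack has no codimension-one stabilizers. Hence
\[
\operatorname{ch}(\omega_{\cX})=1+K_X+\tfrac12K_X^2\,[\mathrm{pt}],
\]
and $1-S_{\cX}$ acts on $(a,D,c)$ by
\[
(a,D,c)\longmapsto\bigl(0,\,-aK_X,\,-K_X\cdot D-\tfrac12aK_X^2\bigr).
\]
This map is strictly lower triangular with respect to the grading, so its rank is at most two: the image lies in the span of $(0,K_X,0)$ and $(0,0,1)$.

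To show the rank is exactly two, evaluate on two vectors. The image of $(1,0,0)$ is $(0,-K_X,-\tfrac12K_X^2)$, and the image of $(0,K_X,0)$ is $(0,0,-K_X^2)$. Since $-K_X$ is ample, $K_X\ne0$ in $\NS(X)$ and $K_X^2>0$. The two images are therefore linearly independent, and the rank is $2$.

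The main obstacle is justifying that the untwisted summand really is this three-term graded ring with $c_1(\omega_{\cX})\mapsto K_X$, and that Proposition~\ref{prop:numerical-inertia} makes $S_{\cX}$ act on it precisely by multiplication by the untwisted component of $\widetilde{\operatorname{ch}}(\omega_{\cX})$, with no mixing from twisted sectors. We would argue that the inertial Chern character is a ring homomorphism componentwise, so tensoring by $\omega_{\cX}$ becomes multiplication sector by sector.

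Only the nonvanishing of $K_X$ and of $K_X^2$ is used. Nondegeneracy on $\NS$ is not actually needed, because the rank computation above relies only on those two facts.
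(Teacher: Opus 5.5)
Your proposal is correct and matches the paper's proof essentially step for step. It uses the same identification $A^*_{\mathrm{num}}(\cX)_{\C}\simeq\C\oplus\NS(X)_{\C}\oplus\C$ via Vistoli (the paper additionally spells out $\Q$-factoriality and $\operatorname{Pic}^0(X)=0$ to pass from divisor classes to $\NS(X)_{\Q}$), $K_{\cX}=\pi^*K_X$, the same formula for $1-S_{\cX}$, and the same test vectors $(1,0,0)$ and $(0,K_X,0)$; you are also right that nondegeneracy on $\NS$ is unnecessary, since only $K_X^2>0$ enters.
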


\begin{proof}
We calculate on $A^*_{\mathrm{num}}(\cX)_{\C}$, where the inertial Chern character is the ordinary one.  Vistoli's rational coarse-space pushforward identifies the Chow groups of $\cX$ with those of $X$ \cite[Proposition 6.1]{Vistoli}; the intersection pairing remains the one on the smooth stack.  For rational Cartier divisors it agrees with the usual pairing on $X$ by the projection formula.  Quotient singularities are $\Q$-factorial: locally the norm of a defining equation upstairs makes a multiple of any Weil divisor Cartier.  Thus $\operatorname{Cl}(X)_{\Q}=\operatorname{Pic}(X)_{\Q}$.  Moreover $H^1(X,\mathcal O_X)=H^2(X,\mathcal O_X)=0$: quotient singularities are rational and their minimal resolution is rational, as in \cite[Remark 4.3]{GR}.  In particular $\operatorname{Pic}^0(X)=0$, giving $\operatorname{Pic}(X)_{\Q}=\NS(X)_{\Q}$.  Through this correspondence, the degree-zero and degree-two numerical groups are each one-dimensional, while the divisor group is $\NS(X)_{\C}$.  Thus
\[
A^*_{\mathrm{num}}(\cX)_{\C}\simeq\C\oplus\NS(X)_{\C}\oplus\C,
\]
and write a class as $(r,D,c)$.  Let $K=K_X$.  Since the canonical stack has no stabilizers in codimension one, its coarse-space map $\pi:\cX\to X$ satisfies $K_{\cX}=\pi^*K$ as rational divisor classes.  Multiplication by $e^K=1+K+K^2/2$ therefore gives
\[
S_{\cX}(r,D,c)
=\left(r,D+rK,c+K\cdot D+\frac{rK^2}{2}\right).
\]
It follows that
\[
(1-S_{\cX})(r,D,c)
=\left(0,-rK,-K\cdot D-\frac{rK^2}{2}\right).
\]
The image is contained in the span of $(0,K,0)$ and $(0,0,1)$, so its dimension is at most two.  Since $-K_X$ is ample, $K^2>0$, and the two images
\[
\begin{aligned}
(1-S_{\cX})(1,0,0)&=\left(0,-K,-\frac{K^2}{2}\right),\\
(1-S_{\cX})(0,K,0)&=(0,0,-K^2)
\end{aligned}
\]
are linearly independent.  The untwisted contribution is therefore exactly two.
\end{proof}

The remaining summands are one-dimensional, so their contribution reduces to the canonical character at a single cyclic quotient point.

\begin{lemma}[The local cyclic contribution]\label{lem:local-cyclic-rank}
At a point of type $\frac1n(1,q)$, put $w=\gcd(n,q+1)$ and let $W=\bigoplus_{i=1}^{n-1}\C_i$ be its twisted summand in \eqref{eq:inertial-decomposition}.  The kernel of $(1-S_{\cX})|_W$ has dimension $w-1$, and its rank is $n-w$.
\end{lemma}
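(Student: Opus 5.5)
By Proposition~\ref{prop:numerical-inertia}, the Serre operator $S_{\cX}$ acts on the twisted summands as multiplication by the inertial Chern character $\widetilde{\operatorname{ch}}(\omega_{\cX})$. Each twisted sector $\mathcal B\mu_n^{(i)}$ is a point gerbe, so its summand $\C_i$ is one-dimensional. On such a line, $S_{\cX}$ is therefore multiplication by a scalar: the trace of the $i$th nonidentity stabilizer element $g^i$ on the fiber of $\omega_{\cX}$ at the orbifold point. The plan is to compute this scalar for each $i$ and then count the indices for which it equals $1$. Each such index contributes a one-dimensional kernel of $1-S_{\cX}$. Every other index contributes rank one.

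First I fix local coordinates. Near the point, $\cX$ is $[\C^2/\mu_n]$, and the generator $g$ acts on the coordinates by $(x,y)\mapsto(\zeta x,\zeta^q y)$. The cotangent space then carries the dual weights, and $\omega_{\cX}$ is the top exterior power $dx\wedge dy$ of the cotangent space. Using the convention \eqref{eq:inertial-character-convention} of evaluating characters at $g$ rather than $g^{-1}$, the fiber of $\omega_{\cX}$ at the point is a character on which $g^i$ acts by $\zeta^{-i(q+1)}$. This is the eigenvalue anticipated in the introduction. Its inverse would do equally well, since only whether the eigenvalue equals $1$ matters. On the line $\C_i$, the operator $1-S_{\cX}$ is therefore multiplication by $1-\zeta^{-i(q+1)}$.

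It remains to count. The scalar vanishes exactly when $n\mid i(q+1)$, which holds exactly when $(n/w)\mid i$, where $w=\gcd(n,q+1)$. Among $1\le i\le n-1$, these are the multiples $i=k\,n/w$ with $1\le k\le w-1$. So there are $w-1$ of them. Because the summands $\C_i$ are one-dimensional and $S_{\cX}$ preserves each of them, $(1-S_{\cX})|_W$ is diagonal. Its kernel has dimension $w-1$, and its rank is $(n-1)-(w-1)=n-w$.

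The main obstacle I expect is justifying that $S_{\cX}$ really acts diagonally on the lines $\C_{i}$ through the pointwise trace. Multiplication by $\widetilde{\operatorname{ch}}(\omega_{\cX})$ in the inertial decomposition is the inertial product restricted to a sector. On a zero-dimensional twisted sector, this reduces to the degree-zero part, namely the scalar trace, and it does not mix sectors. I would deduce this from the multiplicativity of the inertial Chern character, $\widetilde{\operatorname{ch}}(E\otimes F)=\widetilde{\operatorname{ch}}(E)\,\widetilde{\operatorname{ch}}(F)$ computed sector by sector, which is part of Proposition~\ref{prop:numerical-inertia}. I would also note that the sign convention only permutes $i\leftrightarrow n-i$, so the count is unaffected.
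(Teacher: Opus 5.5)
Your proposal is correct and follows essentially the same route as the paper: $S_{\cX}$ acts on each one-dimensional twisted line $\C_i$ by the canonical character $\zeta^{-i(q+1)}$, and counting the solutions of $n\mid i(q+1)$ with $1\le i\le n-1$ gives $w-1$ fixed lines and rank $n-w$. The only step the paper spells out that you leave implicit is the equivalence $n\mid i(q+1)\iff (n/w)\mid i$, which follows from writing $n=wn'$ and $q+1=wa$ with $\gcd(n',a)=1$.
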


\begin{proof}
Fix a point of type $\frac1n(1,q)$ and a primitive $n$th root of unity $\zeta$.  The generator acts on the tangent fiber by $\operatorname{diag}(\zeta,\zeta^q)$, so its action on the canonical fiber $\bigwedge^2T^*$ is multiplication by $\zeta^{-(1+q)}$.  Thus, with convention \eqref{eq:inertial-character-convention}, $S_{\cX}$ acts on the component $\C_i$ indexed by $\zeta^i$ through
\[
\zeta^{-i(1+q)}.
\]
Using inverse elements to label the components would invert these eigenvalues without changing the rank.  Each $\C_i$ is one-dimensional, so it contributes one to $\rk(1-S_{\cX})$ unless its eigenvalue is $1$.  The latter condition is
\[
n\mid i(q+1).
\]
Put $w=\gcd(n,q+1)$ and write $n=wn'$ and $q+1=wa$, with $\gcd(n',a)=1$.  Then
\[
n\mid i(q+1)\iff wn'\mid iwa\iff n'\mid i.
\]
Among $1\le i\le n-1$, its solutions are precisely
\[
i=kn',\qquad 1\le k\le w-1.
\]
Thus $\dim W=n-1$ and $\dim\ker((1-S_{\cX})|_W)=w-1$, giving rank $(n-1)-(w-1)=n-w$.  In particular,
\[
\boxed{\text{local contribution of }\tfrac1n(1,q)=n-\gcd(n,q+1).}
\]
\end{proof}

For $q=1$, the condition $n\mid2i$ gives the same $w=\gcd(n,2)=\gcd(n,n+2)$ as the common-root count in Proposition~\ref{prop:weighted-rank}.  The root $t=1$ accounts for the untwisted kernel direction; the other $w-1$ roots correspond to fixed twisted sectors.  Thus the weighted-projective calculation is the $q=1$ model of the canonical-character count.

The following values recover the initial $\frac13(1,1)$ calculation and show that the correction depends on $q$, not just on the group order:
\[
\begin{array}{c|c|c|c|c}
\text{singularity}&w=\gcd(n,q+1)&(w,\ell)&\substack{\text{local rank of the}\\\text{skew Euler form}}&\text{local genus}\\
\hline
\frac12(1,1)&2&(2,1)&0&0\\
\frac13(1,1)&1&(1,3)&2&1\\
\frac14(1,1)&2&(2,2)&2&1\\
\frac14(1,3)&4&(4,1)&0&0\\
\frac18(1,1)&2&(2,4)&6&3\\
\frac18(1,3)&4&(4,2)&4&2
\end{array}
\]
The last two columns are the local contributions $n-w$ and $(n-w)/2$, respectively.  In particular, the $A_1$ singularity $\frac12(1,1)$ has zero local skew-rank contribution and is invisible to categorical genus.

Adding these local corrections to the untwisted rank proves the formula announced in the introduction.

\begin{theorem}[The general cyclic quotient formula]\label{thm:cyclic-rank}
Let $X$ be a log del Pezzo surface with cyclic quotient singularities $\frac1{n_j}(1,q_j)$, $j=1,\dots,s$, and let $\cX$ be its canonical stack.  Put $w_j=\gcd(n_j,q_j+1)$ and $\ell_j=n_j/w_j$.  The rank of its skew Euler form is
\[
\boxed{
\rk_{\Q}\chi^-_{\cX}
=2+\sum_{j=1}^s(n_j-w_j).}
\]
Consequently its categorical genus is
\[
\boxed{
g_{\mathrm{cat}}(\cX)
=1+\frac12\sum_{j=1}^s w_j(\ell_j-1).}
\]
\end{theorem}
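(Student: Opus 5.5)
The plan is to assemble the pieces already established in this section. First, I reduce the rank of the skew form to the rank of $1-S_{\cX}$ via Lemma~\ref{lem:skew-serre}. For this I need $K_0^{\mathrm{num}}(\cX)_{\C}$ to be finite-dimensional with nondegenerate Euler pairing. Nondegeneracy is automatic, since we have quotiented by the radical, and the left and right radicals agree by Serre duality. Finite-dimensionality follows from Proposition~\ref{prop:numerical-inertia}. There the untwisted summand is $\C\oplus\NS(X)_{\C}\oplus\C$, as in the proof of Lemma~\ref{lem:untwisted-rank}, and the twisted part has dimension $\sum_j(n_j-1)$. Since scalar extension from $\Q$ to $\C$ preserves ranks, we get $\rk_{\Q}\chi^-_{\cX}=\rk_{\C}(1-S_{\cX})$.

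Next, I use that the decomposition \eqref{eq:inertial-decomposition} is $S_{\cX}$-invariant, so $1-S_{\cX}$ is block diagonal and its rank is the sum of the ranks on the summands. On the untwisted summand, Lemma~\ref{lem:untwisted-rank} gives rank $2$. The twisted lines group by singular point $j$ into $W_j=\bigoplus_{i=1}^{n_j-1}\C_{j,i}$, and on each $W_j$ the operator $S_{\cX}$ is diagonal with eigenvalues $\zeta_j^{-i(q_j+1)}$. Lemma~\ref{lem:local-cyclic-rank} therefore gives rank $n_j-w_j$ on $W_j$. Summing yields
\[
\rk_{\Q}\chi^-_{\cX}=2+\sum_j(n_j-w_j).
\]

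For the genus formula, I divide by two and substitute $n_j=w_j\ell_j$, so that $n_j-w_j=w_j(\ell_j-1)$. Integrality needs no separate argument, since the rank of an alternating form is even. It can also be checked directly: when $w_j$ is odd, $n_j$ and $q_j+1$ cannot both be even, so $\ell_j$ must be odd.

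The main point requiring care is the legitimacy of the block decomposition on \emph{numerical} $K$-theory. The inertial Chern character must descend to $K_0^{\mathrm{num}}$ and be an isomorphism onto the stated direct sum, with each twisted gerbe contributing exactly one surviving line and $S_{\cX}$ acting as multiplication by $\widetilde{\operatorname{ch}}(\omega_{\cX})$. This is precisely the content of Proposition~\ref{prop:numerical-inertia}, which rests on the appendix. I would invoke it as stated, noting only that multiplication by $\widetilde{\operatorname{ch}}(\omega_{\cX})$ acts sectorwise and hence respects the summands.
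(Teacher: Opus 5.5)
Your proposal is correct and follows the paper's proof: Lemma~\ref{lem:skew-serre} reduces the rank to $\rk(1-S_{\cX})$, Proposition~\ref{prop:numerical-inertia} splits it into the untwisted rank $2$ of Lemma~\ref{lem:untwisted-rank} and the local ranks $n_j-w_j$ of Lemma~\ref{lem:local-cyclic-rank}, and extension of scalars followed by division by two finishes the argument. Your optional direct parity check tacitly uses $\gcd(n_j,q_j)=1$ in the case where $q_j+1$ is odd, since that is what forces $n_j$ to be odd; as you note, the evenness of alternating ranks already makes that check unnecessary.
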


\begin{proof}
By Lemma~\ref{lem:skew-serre}, the rank of the skew Euler form equals $\rk(1-S_{\cX})$.  Proposition~\ref{prop:numerical-inertia} splits this rank into the untwisted contribution, which is two by Lemma~\ref{lem:untwisted-rank}, and the local twisted contributions, which are $n_j-w_j$ by Lemma~\ref{lem:local-cyclic-rank}.  The resulting sum is unchanged by extension of scalars from $\Q$ to $\C$.  Dividing by two gives the genus formula.
\end{proof}

\begin{corollary}[An obstruction to derived equivalence]\label{cor:derived-basket-obstruction}
Let $\mathcal X$ and $\mathcal X'$ be the canonical stacks of two log del Pezzo surfaces $X$ and $X'$ with cyclic quotient singularities.  If there is a $\C$-linear exact equivalence
\[
\Db(\coh\mathcal X)\simeq\Db(\coh\mathcal X'),
\]
then
\[
g_{\mathrm{cat}}(\mathcal X)=g_{\mathrm{cat}}(\mathcal X').
\]
Writing $\rho(X)=\rk\NS(X)$, the singularity baskets satisfy
\[
\begin{aligned}
\sum_j(n_j-w_j)&=\sum_k(n'_k-w'_k),\\
\rho(X)+\sum_j(n_j-1)&=\rho(X')+\sum_k(n'_k-1).
\end{aligned}
\]
Failure of either equality therefore obstructs $\C$-linear derived equivalence of the canonical stacks.
\end{corollary}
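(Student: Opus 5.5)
The statement to prove is Corollary~\ref{cor:derived-basket-obstruction}.  It has three parts: invariance of categorical genus, invariance of the rank of numerical $K$-theory, and the combination of these with Theorem~\ref{thm:cyclic-rank} and Proposition~\ref{prop:numerical-inertia}.

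\textbf{Invariance of genus and of the numerical rank.}  Let $\Phi:\Db(\coh\cX)\to\Db(\coh\cX')$ be a $\C$-linear exact equivalence.
\begin{itemize}
\item $\Phi$ induces an isomorphism $K_0(\cX)\to K_0(\cX')$.  Since $\dim\Ext^i$ is preserved, this isomorphism is an isometry for $\chi$.
\item An isometry carries the radical to the radical.  Hence $\Phi$ descends to an isometric isomorphism $K_0^{\mathrm{num}}(\cX)_{\Q}\simeq K_0^{\mathrm{num}}(\cX')_{\Q}$.
\item Being isometric for $\chi$, it is also isometric for $\chi^-=\chi-\chi^{\mathsf T}$.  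So $\rk_\Q\chi^-$ agrees on both sides, and therefore $g_{\mathrm{cat}}(\cX)=g_{\mathrm{cat}}(\cX')$.
\item The same isomorphism shows that $\dim_\Q K_0^{\mathrm{num}}$ is a derived invariant.
\end{itemize}

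\textbf{First basket equality.}  Theorem~\ref{thm:cyclic-rank} gives $\rk_\Q\chi^-_{\cX}=2+\sum_j(n_j-w_j)$, and likewise for $\cX'$.  Equating the two ranks and cancelling the $2$ gives the first identity.

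\textbf{Second basket equality.}  I compute $\dim K_0^{\mathrm{num}}(\cX)_\C$ from the decomposition \eqref{eq:inertial-decomposition} of Proposition~\ref{prop:numerical-inertia}.
\begin{itemize}
\item The twisted lines contribute $\sum_j(n_j-1)$.
\item The untwisted summand is $\C\oplus\NS(X)_\C\oplus\C$, as identified in the proof of Lemma~\ref{lem:untwisted-rank}.  That proof uses Vistoli's correspondence, $\Q$-factoriality, and $\operatorname{Pic}^0(X)=0$.
\end{itemize}
One must check that this summand really is numerical, so that no further quotient occurs.
\begin{itemize}
\item The intersection pairing on $\NS(X)_\Q$ is nondegenerate by definition of numerical equivalence.
\item Degrees $0$ and $2$ pair perfectly.
\end{itemize}
So the untwisted dimension is $\rho(X)+2$, and
\[
\dim K_0^{\mathrm{num}}(\cX)_\C=\rho(X)+2+\sum_j(n_j-1).
\]
Equating this with the corresponding expression for $\cX'$ and cancelling the $2$ gives the second identity.

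\textbf{Main obstacle.}  The delicate step is justifying that \eqref{eq:inertial-decomposition} is an isomorphism onto the full direct sum, with no twisted line killed by numerical equivalence.  Each line must survive in $K_0^{\mathrm{num}}$, not merely in $K_0$.  This is the content deferred to Appendix~\ref{app:numerical-inertia}, and I would invoke Proposition~\ref{prop:numerical-inertia} as stated.

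As a sanity check, the twisted lines are spanned by inertial characters of skyscraper sheaves $\iota_*\rho_b$ on the residual gerbe.  Their pairings are given by the $\mu_n$ character table twisted by the Koszul factor, and this factor is nonzero on nonidentity elements for an isolated fixed point.  That makes nondegeneracy plausible.

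The last sentence of the corollary is then just the contrapositive.
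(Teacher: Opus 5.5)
Your proposal is correct and is essentially the paper's own proof. In both, a $\C$-linear exact equivalence preserves $\chi$ and its radical, hence both $\rk_{\Q}\chi^-$ and $\dim K_0^{\mathrm{num}}$; Theorem~\ref{thm:cyclic-rank} then gives the first basket identity, and Proposition~\ref{prop:numerical-inertia} with Lemma~\ref{lem:untwisted-rank} gives the second. Your extra nondegeneracy checks and the skyscraper-sheaf heuristic are harmless but not needed: survival of each twisted line is exactly Lemma~\ref{lem:app-gerbe-chow}, where the gerbe's fundamental class has degree $1/n\neq0$.
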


\begin{proof}
A $\C$-linear exact equivalence preserves the Euler pairing and its numerical radical, hence the rank of the antisymmetrized Euler form and the dimension of numerical $K$-theory.  The first gives equality of categorical genera and the first basket condition by Theorem~\ref{thm:cyclic-rank}; the second gives the other condition by Proposition~\ref{prop:numerical-inertia} and Lemma~\ref{lem:untwisted-rank}.
\end{proof}

The general theorem recovers Proposition~\ref{prop:third-rank}: each $\frac13(1,1)$ point contributes $3-\gcd(3,2)=2$, so $r$ such points give rank of the skew Euler form $2+2r$.  The two proofs encode the same additional contributions to the rank differently---through exceptional curves and their cross-pairings in the special McKay decomposition, or through twisted-sector eigenlines on which the Serre operator acts nontrivially.  The latter description extends the geometric calculation to arbitrary cyclic baskets.

\begin{example}[Johnson--Koll\'ar surfaces]\label{ex:johnson-kollar-categorical}
For $k\ge1$, consider a general Johnson--Koll\'ar surface
\[
X_{8k+4}\subset\Pj(2,2k+1,2k+1,4k+1).
\]
Gugiatti--Rota construct a full exceptional collection of $10+12k$ objects on its canonical stack $\cX$ \cite[Theorem 5.2]{GR}.  The basket is
\[
\frac1{4k+1}(1,1),\qquad
4\times\frac1{2k+1}(1,k)
\]
\cite[Section 4.1]{GR}.  At the first point, $w=\gcd(4k+1,2)=1$ and $\ell=4k+1$, giving local genus contribution $2k$.  At each remaining point, $\gcd(2k+1,k+1)=1$, so $w=1$, $\ell=2k+1$, and the contribution is $k$.  Theorem~\ref{thm:cyclic-rank} therefore gives
\[
\boxed{g_{\mathrm{cat}}(\cX)=1+2k+4k=6k+1,}
\qquad
\boxed{\rk\chi^-_{\cX}=12k+2.}
\]
The full exceptional collection gives $\dim K_0^{\mathrm{num}}(\cX)_{\Q}=10+12k$.  The twisted summands have total dimension $4k+4(2k)=12k$; thus the untwisted part has dimension $2+\rho(X)=10$, and $\rho(X)=8$.  Since every width is $1$, no nonidentity twisted sector is fixed by the Serre operator.  On the untwisted part, $1-S_{\cX}$ has rank $2$, so its total kernel has dimension $8$, as also follows from $(10+12k)-(12k+2)=8$.

For $k=1$, the strong full collection on $X_{12}\subset\Pj(2,3,3,5)$ gives a direct check using the Euler matrix.  Exact row reduction of our transcription of the Euler matrix from \cite[Remark 5.3, Lemma 5.9, and Tables 1 and 4]{GR} gives $\rk\chi^-=14$ and $g_{\mathrm{cat}}=7=1+2+4\cdot1$.   Gugiatti--Rota's general adjoint formula for finite small subgroups of $\mathrm{GL}(2,\C)$ \cite[Theorem 3.1]{GR} also suggests that the special McKay calculation of Proposition~\ref{prop:third-rank} could in principle be extended beyond the $\frac13(1,1)$ case.
\end{example}

\begin{remark}[Smooth-point blow-ups]\label{rem:smooth-blowup}
A smooth-point blow-up leaves the basket unchanged.  Hence Theorem~\ref{thm:cyclic-rank} gives $g_{\mathrm{cat}}(\widetilde{\cX})=g_{\mathrm{cat}}(\cX)$ whenever the blown-up surface remains log del Pezzo.
\end{remark}

\section{Age and lattice points}\label{sec:age}

We reinterpret half of the local correction in Theorem~\ref{thm:cyclic-rank} as the number of nontrivial twisted sectors of age less than one.

At a point of type $\frac1n(1,q)$, a generator acts on tangent vectors by $(z_1,z_2)\mapsto(\zeta z_1,\zeta^qz_2)$, where $\gcd(n,q)=1$.  The age of $\zeta^i$ is $i/n+\{iq/n\}$, where braces denote fractional part.

\begin{lemma}[Three equal local counts]\label{lem:age-count}
For a cyclic quotient singularity $\frac1n(1,q)$, set $w=\gcd(n,q+1)$ and $\ell=n/w$.  Let $v_1,v_2$ be the primitive generators of its cone in the quotient lattice $N$, and put $\Delta=\operatorname{conv}\{0,v_1,v_2\}$.  Let $W$ be its local twisted summand.  The following three numbers agree:
\[
\frac12\rk\bigl((1-S)|_W\bigr)
=\#\{i:0<\operatorname{age}(i)<1\}
=\#(\Delta^\circ\cap N)
=\frac{n-w}{2}=\frac{w(\ell-1)}2.
\]
\end{lemma}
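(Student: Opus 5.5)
The plan is to establish each equality separately against the common value $(n-w)/2$.

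\emph{First count.} Lemma~\ref{lem:local-cyclic-rank} gives $\rk((1-S)|_W)=n-w$ directly. The identity $n-w=w(\ell-1)$ is immediate from $n=w\ell$.

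\emph{Age count.} I would pair the sectors indexed by $i$ and $n-i$. For $1\le i\le n-1$, set
\[
a(i)=\frac{i}{n}+\Bigl\{\frac{iq}{n}\Bigr\}.
\]
Since $\gcd(n,q)=1$ and $0<i<n$, neither $i/n$ nor $iq/n$ is an integer. Hence
\[
a(i)+a(n-i)=1+1=2,
\]
because $\{x\}+\{-x\}=1$ for non-integral $x$. So exactly one of each pair has age less than one, unless $a(i)=a(n-i)=1$.

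Next I would characterize age exactly one. The condition $a(i)=1$ means $i/n+\{iq/n\}=1$, that is, $i+iq\equiv 0\pmod n$. By the computation in Lemma~\ref{lem:local-cyclic-rank}, this is the condition $n\mid i(q+1)$. Its solutions are $i=kn'$ with $1\le k\le w-1$, so there are $w-1$ of them.

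Ages are strictly between $0$ and $2$. The remaining $n-1-(w-1)=n-w$ indices therefore split into pairs, one element of each pair having age below one. This gives $(n-w)/2$ sectors of age less than one. It also shows that $n-w$ is even, a parity fact the pairing must confirm rather than assume.

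\emph{Lattice count.} Use the standard model: $N=\Z^2+\Z\cdot\frac1n(1,q)$ and $v_1,v_2$ the standard basis vectors of $\Z^2$. The nonzero points of $N$ in the half-open parallelogram spanned by $v_1,v_2$ are the representatives
\[
u_i=\Bigl(\frac in,\Bigl\{\frac{iq}n\Bigr\}\Bigr),\qquad 1\le i\le n-1,
\]
with none lying on an edge since both coordinates are non-integral. Because $v_1,v_2$ span $\Z^2$, the coordinate sum of $u_i$ is exactly $\operatorname{age}(i)$. A point of $N$ lies in $\Delta^\circ$ iff it lies in the open parallelogram with coordinate sum less than one. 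These points are therefore exactly the $u_i$ with $\operatorname{age}(i)<1$, so $\#(\Delta^\circ\cap N)$ equals the age count.

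\emph{Main obstacle.} I expect the only delicate point to be convention bookkeeping. The age formula and the identification of $N$ must use the same generator, whether $\zeta$ or $\zeta^{-1}$. The sector labelling in $W$ is irrelevant, since only counts are compared, and Lemma~\ref{lem:local-cyclic-rank} already notes that inversion does not change the rank. A consistency check would be the table entries, for example $\frac18(1,3)$ giving $2$.
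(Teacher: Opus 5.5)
Your proposal is correct and follows the paper's proof essentially step for step. It uses the same pairing $i\leftrightarrow n-i$ with $\operatorname{age}(i)+\operatorname{age}(n-i)=2$, the same characterization of age one by $n\mid i(q+1)$ with its $w-1$ solutions from Lemma~\ref{lem:local-cyclic-rank}, and the same model $N=\Z^2+\Z\frac1n(1,q)$, in which the coordinate sum of the coset representative equals the age. The only point the paper spells out that you leave implicit is the possible self-paired index $i=n/2$: your identity already forces it to have age exactly one, so it lies among the $w-1$ excluded sectors.
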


\begin{proof}
The sectors are indexed by $i=1,\dots,n-1$, with
\[
\operatorname{age}(i)
=\frac{i}{n}+\left\{\frac{iq}{n}\right\}.
\]
Since $q$ is coprime to $n$, both summands lie strictly between zero and one.  Their sum is an integer precisely when $n\mid i(q+1)$, and that integer can only be one.  Consequently
\[
\operatorname{age}(i)=1
\quad\Longleftrightarrow\quad
n\mid i(q+1).
\]
The divisibility count in Lemma~\ref{lem:local-cyclic-rank} gives exactly $w-1$ such indices.  A nonzero self-paired index must be $i=n/2$, with $n$ even; since $\gcd(q,n)=1$ forces $q$ to be odd, $\operatorname{age}(n/2)=\frac12+\{q/2\}=1$.  For every remaining index, the sectors $i$ and $n-i$ are distinct and satisfy
\[
\operatorname{age}(i)+\operatorname{age}(n-i)=2.
\]
Indeed, $iq/n$ is nonintegral, so $\{(n-i)q/n\}=\{-iq/n\}=1-\{iq/n\}$, while $i/n+(n-i)/n=1$.
The other $n-w$ sectors consequently occur in pairs with one age below one and the other above one.  Lemma~\ref{lem:local-cyclic-rank} identifies half their number with half the local rank of the skew Euler form.

To see the lattice interpretation directly, identify $v_1,v_2$ with the standard basis and $N$ with $\Z^2+\Z\frac1n(1,q)$.  The nonzero cosets modulo $\Z^2$ have representatives
\[
p_i=\left(\frac in,\left\{\frac{iq}{n}\right\}\right),\qquad 1\le i<n.
\]
Both coordinates are positive, and $p_i\in\Delta^\circ$ precisely when their sum, $\operatorname{age}(i)$, is less than one.  Conversely every interior lattice point has this form.  This proves the three-way equality; $n=w\ell$ gives its final expression.  Equivalently, Pick's theorem gives the same count: the triangle has area $n/2$, while its three edges have lattice lengths $1,1,w$, so its total lattice boundary length, equivalently the number of boundary lattice points, is $w+2$.
\end{proof}

Figure~\ref{fig:local-triangle} illustrates the local equalities in an integral basis of $N$.
\begin{figure}[htbp]
\centering
\begin{tikzpicture}[x=1.1cm,y=0.65cm,font=\small]
  \draw[step=1,gray!20,very thin] (-1.4,-0.4) grid (1.4,5.4);
  \filldraw[fill=blue!9,draw=blue!65!black,thick] (0,0)--(1,0)--(-1,5)--cycle;
  \fill (0,0) circle (2pt) node[below left] {$0$};
  \fill (1,0) circle (1.8pt) node[below right] {$v_1=(1,0)$};
  \fill (-1,5) circle (1.8pt) node[above] {$v_2=(-1,5)$};
  \fill[blue!70!black] (0,1) circle (2.5pt);
  \fill[blue!70!black] (0,2) circle (2.5pt);
  \node[anchor=west,align=left] at (2,3.1)
    {$\frac15(1,1)$: $w=1$, $\ell=5$\\[5pt]
     $(0,1)\longleftrightarrow\operatorname{age}=\frac25$\\[4pt]
     $(0,2)\longleftrightarrow\operatorname{age}=\frac45$\\[5pt]
     local skew rank $=4$\\
     local genus $=2$};
\end{tikzpicture}
\caption{The cone triangle for $\frac15(1,1)$ in the lattice $N=\Z^2$.  Its two interior points correspond to the two age-$<1$ sectors.  Each count equals the local genus contribution, not the total categorical genus.}
\label{fig:local-triangle}
\end{figure}

The lattice description is the usual age-simplex construction in the McKay correspondence.

\begin{definition}[Junior elements]
In the terminology of Ito--Reid, a stabilizer element is junior if its age is exactly one \cite[Section 1.2]{ItoReid}.
\end{definition}

We therefore say \emph{age less than one}, rather than \emph{junior}.  The vector space $H^{\operatorname{age}<1}_{\cX}$ has a basis indexed by the nontrivial sectors of age strictly less than one.

\begin{theorem}[Age interpretation]\label{thm:age-interpretation}
Let $X$ be a log del Pezzo surface with cyclic quotient singularities, and let $\cX$ be its canonical stack.  Its categorical genus satisfies
\begin{equation}\label{eq:age-genus}
\boxed{
g_{\mathrm{cat}}(\cX)=1+\dim H^{\operatorname{age}<1}_{\cX}.}
\end{equation}
\end{theorem}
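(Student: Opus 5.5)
The identity will follow by summing the local equalities of Lemma~\ref{lem:age-count} over the basket and comparing with Theorem~\ref{thm:cyclic-rank}. The first step is to describe $H^{\operatorname{age}<1}_{\cX}$ through the inertia decomposition \eqref{eq:inertia-components}. The nontrivial twisted sectors of $I\cX$ are the gerbes $\mathcal B\mu_{n_j}^{(i)}$, indexed by a singular point $j$ and an index $1\le i\le n_j-1$. Each of these is connected, so it contributes exactly one basis vector $\1_{(j,i)}$ precisely when its age lies strictly between zero and one. Hence
\[
\dim H^{\operatorname{age}<1}_{\cX}=\sum_{j=1}^s\#\{\,i:1\le i\le n_j-1,\ 0<\operatorname{age}_j(i)<1\,\}.
\]
Here $\operatorname{age}_j(i)=i/n_j+\{iq_j/n_j\}$ is computed with the tangent action $\operatorname{diag}(\zeta,\zeta^{q_j})$ at the $j$th point. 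The labelling of sectors should match the one used in Lemma~\ref{lem:local-cyclic-rank}. If one labels by inverse elements instead, the pairing $i\leftrightarrow n_j-i$ exchanges age below one with age above one. That pairing only permutes indices, so it does not change the count of indices with age less than one.

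Next, apply Lemma~\ref{lem:age-count} at each point. It gives
\[
\#\{i:0<\operatorname{age}_j(i)<1\}=\frac{n_j-w_j}{2}=\frac{w_j(\ell_j-1)}{2}.
\]
Summing over $j$ yields $\dim H^{\operatorname{age}<1}_{\cX}=\frac12\sum_j w_j(\ell_j-1)$. Adding one and comparing with the boxed genus formula of Theorem~\ref{thm:cyclic-rank} gives \eqref{eq:age-genus}.

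I expect no real obstacle, because the substance is already in Lemma~\ref{lem:age-count}. There, the age-one sectors are exactly the $w-1$ fixed sectors, and the remaining $n-w$ sectors pair off into one with age below one and one with age above one. The only point that needs care is bookkeeping. Only nontrivial sectors are counted, and the untwisted sector, of age zero, is excluded from $H^{\operatorname{age}<1}_{\cX}$. That excluded sector corresponds to the global summand $1$, which comes from the untwisted rank $2$ of Lemma~\ref{lem:untwisted-rank}. I would state this correspondence explicitly, so that the two sides match term by term: untwisted sector to the $1$, and age-$<1$ twisted sectors to the local halves $\tfrac12(n_j-w_j)$.
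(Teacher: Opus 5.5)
Your proposal is correct and is essentially the paper's proof: summing the local count of Lemma~\ref{lem:age-count} over the basket gives $\dim H^{\operatorname{age}<1}_{\cX}=\frac12\sum_j(n_j-w_j)$, and Theorem~\ref{thm:cyclic-rank} identifies this sum with $g_{\mathrm{cat}}(\cX)-1$. Your remark about labelling sectors by inverse elements is harmless but unnecessary: the age of a sector is determined by the pair $(x,g)$ itself, so the number of sectors with age less than one does not depend on how the sectors are indexed.
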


\begin{proof}
For singularities $\frac1{n_j}(1,q_j)$, Lemma~\ref{lem:age-count} gives
\[
\dim H^{\operatorname{age}<1}_{\cX}
=\frac12\sum_j\bigl(n_j-\gcd(n_j,q_j+1)\bigr).
\]
Theorem~\ref{thm:cyclic-rank} identifies this sum with $g_{\mathrm{cat}}(\cX)-1$.
\end{proof}

\section{Residual singularities and mirror genus}\label{sec:mirror}

Lemma~\ref{lem:age-count} has identified half the local skew-rank contribution with both an age-$<1$ count and the interior lattice points of a cone triangle.  We now apply that identity to the residual cones of a toric degeneration, beginning with the relevant definitions.

\begin{definition}[Fano polygons]
A Fano polygon is a lattice polygon with primitive vertices and the origin strictly inside it.  The toric surface $X_P$ associated with a Fano polygon $P$ is defined by the fan over its edges, not by its normal fan.
\end{definition}

\begin{definition}[Maximally mutable Laurent polynomials]
A maximally mutable Laurent polynomial admits the prescribed algebraic mutations along polygon mutations: birational torus substitutions required to remain Laurent.  We use normalized vertex coefficients and zero constant term, with the mutability conventions of \cite[Section 3]{CKPT}.
\end{definition}

\begin{definition}[$\Q$-Gorenstein deformations]
A $\Q$-Gorenstein deformation is a deformation that locally comes from an equivariant deformation of the index-one cover, the cyclic cover on which the canonical class becomes Cartier.  This formulation includes the required base-change compatibility for reflexive powers of the relative canonical sheaf \cite[Basic Concepts]{ACC+}.
\end{definition}

\begin{definition}[Toric degenerations and local rigidity]
A toric $\Q$-Gorenstein degeneration of a del Pezzo surface is a $\Q$-Gorenstein degeneration with reduced fibers to a normal toric del Pezzo surface \cite[Definition 1]{ACC+}.  A surface is locally $\Q$-Gorenstein rigid if its singularities admit no nontrivial local $\Q$-Gorenstein deformations.
\end{definition}

\begin{definition}[Width and local Gorenstein index]
Consider a cyclic quotient $\frac1n(1,q)$.  Its cone has lattice width $w$ and lattice height $\ell$, equal to the local Gorenstein index (the height denoted $r$ in \cite[Basic Concepts]{ACC+}), determined by
\[
w=\gcd(n,q+1),\qquad n=w\ell.
\]
\end{definition}
Putting $a=(q+1)/w$ gives the useful normal form
\begin{equation}\label{eq:cyclic-normal-form}
\frac1{w\ell}(1,wa-1),\qquad \gcd(a,\ell)=1.
\end{equation}
The index-one cover is the hypersurface $xy=z^w$, with a residual $\mu_\ell$-action.  Its equivariant deformations describe the local $\Q$-Gorenstein deformations \cite[Basic Concepts]{ACC+}.

\begin{definition}[Smoothable and residual cyclic quotient singularities]
The decomposition into smoothable and residual parts is described by Euclidean division:
\[
w=m\ell+w_0,
\qquad 0\le w_0<\ell.
\]
The case $w_0=0$ gives a $T$-singularity, which is $\Q$-Gorenstein smoothable.  The case $m=0$, equivalently $0<w<\ell$, gives a residual singularity, which is $\Q$-Gorenstein rigid.  In general the residue keeps the same $a$ in \eqref{eq:cyclic-normal-form} and replaces $w$ by $w_0$:
\[
\frac1{w_0\ell}(1,w_0a-1)\quad(w_0>0);
\qquad\text{no residual point if }w_0=0.
\]
\end{definition}
Thus $m$ counts the primitive smoothable $T$-cones and $m\ell$ is the width occupied by the $T$-part.  A general local $\Q$-Gorenstein deformation removes this part \cite[Basic Concepts]{ACC+}.  A surface is locally $\Q$-Gorenstein rigid precisely when all its singularities are residual.  For example, $\frac13(1,1)$ and $\frac16(1,1)$ are residual, with $(w,\ell)=(1,3)$ and $(2,3)$, whereas $\frac14(1,1)$ is a $T$-singularity with $(w,\ell)=(2,2)$.

\begin{definition}[Singularity content]
The \emph{singularity content} of a Fano polygon is $(\sum m,\mathcal B)$, where the sum counts primitive $T$-cones and $\mathcal B$ is the multiset of its residual singularities.
\end{definition}

These conventions will also be used in Section~\ref{sec:qg-generic}, without redefining the local data.

The existence of a toric $\Q$-Gorenstein degeneration and local $\Q$-Gorenstein rigidity are independent conditions.  The surface $\Pj(1,1,4)$ is toric and hence admits such a degeneration, but is not locally $\Q$-Gorenstein rigid.  Conversely, the complete intersection $X_{6,6}\subset\Pj(2,2,3,3,3)$ has four residual $\frac13(1,1)$ points and degree $1/3$; it is the family $X_{4,1/3}$ in \cite[Section 7.6, entry 25]{OP}.  It is one of the three families without a toric $\Q$-Gorenstein degeneration: its anticanonical system is empty, whereas a toric surface has an effective anticanonical boundary, and $h^0(-K)$ is invariant in $\Q$-Gorenstein families \cite[Section 7]{CH}.

To apply Tveiten's genus formula, we express the residual singularities of a Fano polygon in terms of lattice points.

Figure~\ref{fig:mixed-cone-schematic} illustrates the separation of smoothable and residual contributions.

\begin{figure}[htbp]
\centering
\begin{tikzpicture}[x=1cm,y=1cm,font=\small]
  \fill[orange!22] (3.5,0)--(0,3)--(7.5,3)--cycle;
  \fill[blue!9] (3.5,0)--(7.5,3)--(10,3)--cycle;
  \foreach \position in {2.5,5,7.5}
    \draw[gray!70,dashed] (3.5,0)--(\position,3);
  \draw[thick] (3.5,0)--(0,3)--(10,3)--cycle;
  \fill (3.5,0) circle (2.2pt) node[below] {$0$};
  \fill (0,3) circle (1.8pt) node[left] {$u$};
  \fill (10,3) circle (1.8pt) node[right] {$v$};
  \foreach \position/\width in {1.25/\ell,3.75/\cdots,6.25/\ell,8.75/w_0}
    \node[above] at (\position,3.05) {$\width$};
  \draw[thin] (0,3.55)--(0,3.95);
  \draw[thin] (10,3.55)--(10,3.95);
  \draw[<->] (0,3.8)--(10,3.8) node[midway,above] {$E:\quad w=m\ell+w_0$};
  \node[fill=orange!22,inner sep=2pt] at (3.5,2.45) {$m$ primitive $T$-triangles};
  \draw[gray!70,dotted] (3.5,0)--(10.9,0);
  \draw[gray!70,dotted] (10,3)--(10.9,3);
  \draw[<->] (10.75,0)--(10.75,3) node[midway,right] {$\ell$};
  \node[text=orange!65!black] at (2.7,-0.75) {$T$-part: $m\binom{\ell}{2}$};
  \node[text=blue!70!black] at (8.3,-0.75) {residue: $\dfrac{w_0(\ell-1)}2$};
  \node at (5.2,-1.65)
    {$\displaystyle\frac{w(\ell-1)}2=m\binom{\ell}{2}+\frac{w_0(\ell-1)}2$};
\end{tikzpicture}
\caption{Schematic decomposition of a mixed cone of width $w=m\ell+w_0$ and height $\ell$.  The orange part consists of $m$ primitive $T$-cones and contributes $m\binom{\ell}{2}$ to the categorical genus of the toric stack; the blue residual cone contributes $w_0(\ell-1)/2$ and survives a general $\Q$-Gorenstein deformation.  The repeated pieces are schematic, not a prescribed number of lattice triangles.}
\label{fig:mixed-cone-schematic}
\end{figure}

\begin{definition}[Residual lattice points]\label{def:residual-lattice-points}
Choose a crepant subdivision of the spanning fan of $P$ into primitive $T$-cones and residual cones $C_\alpha$: the inserted primitive ray generators lie on the original outer edge, so the canonical support function is unchanged and the induced toric morphism is crepant.  If $u_\alpha,v_\alpha$ are the primitive generators of $C_\alpha$, set
\[
R_{C_\alpha}
=\bigl(\operatorname{conv}\{0,u_\alpha,v_\alpha\}\cap N\bigr)
\setminus\{0,u_\alpha,v_\alpha\},
\qquad R=\bigcup_\alpha R_{C_\alpha}.
\]
\end{definition}
This is the residual set of \cite[Definitions 3.5--3.6]{CKPT}.  Here $R$ is a set of lattice points.  A mixed cone is subdivided first; only its residual subcone contributes to $R$.  The set $R$ itself may contain points of $\partial P$, but only $R\cap P^\circ$ enters the genus formula.  Primitive radial edges have no interior lattice points, so this intersection consists precisely of the interiors of the residual triangles.  Its cardinality is subdivision-independent: each residual triangle has width $w_0$ and height $\ell$ and contributes $w_0(\ell-1)/2$ by Lemma~\ref{lem:age-count}, regardless of where the $T$-part was cut.  Example~\ref{ex:mixed-cone-delta} exhibits two different residual sets with the same interior cardinality.  We use Tveiten's theorem \cite[Theorem 3.13]{Tveiten} in the following formulation \cite[Theorem 3.13]{CKPT}.

\begin{citedtheorem}[Tveiten's maximally mutable Laurent polynomial genus formula]
Let $f$ be a maximally mutable Laurent polynomial in two variables with Newton polygon $P$, and let $R$ be a choice of residual points as above.  Resolve the toric surface defined by the normal fan of $P$ and the basepoints of the pencil defined by $1$ and $f$.  The smooth projective model of a general member has genus
\[
g\bigl(\widetilde{\{f=\eta\}}\bigr)=1+\#(R\cap P^\circ).
\]
\end{citedtheorem}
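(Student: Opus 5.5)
The statement just above is Tveiten's genus formula, which the paper quotes (in the formulation of \cite[Theorem 3.13]{CKPT}) rather than proves. The plan is therefore a reconstruction of why it holds, organised as a computation of the geometric genus of a general fiber through the Newton polygon. For a general Laurent polynomial $h$ with Newton polygon $P$, Khovanskii's theorem gives the genus of the closure of $\{h=\eta\}$ in the toric surface of the normal fan of $P$ as $\#(P^\circ\cap N)$. The maximally mutable condition imposes constraints along the edges of $P$, and the plan is to show that these constraints lower the genus by exactly the number of interior lattice points coming from the $T$-part. The fiber must still contain the interior point $0$ and the residual interior points, which gives the count $1+\#(R\cap P^\circ)$ once the origin is separated out.

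\textbf{Step 1: normal form of the edge coefficients.} Fix an edge $E$ of $P$ at height $\ell$ with width $w=m\ell+w_0$. Maximal mutability forces the restriction of $f$ to the lattice points of $E$, written as a one-variable polynomial, to be divisible by $(1+x)^{m\ell}$, up to the normalization of vertex coefficients. Generically the remaining factor of degree $w_0$ has distinct roots. The point to establish is that, on each parallel slice at height $k$ below the edge, the coefficients are divisible by $(1+x)^{m(\ell-k)}$ for $0\le k<\ell$; this is the binomial edge data of \cite[Section 3]{CKPT}.

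\textbf{Step 2: local genus drop at each boundary divisor.} At the torus-fixed boundary divisor $D_E$, the curve $\overline{\{f=\eta\}}$ meets $D_E$ at $x=-1$ with multiplicity $m\ell$. The Step 1 divisibility says that, in local coordinates, the curve has a singularity of the form $y^{\ell}\sim (x+1)^{m\ell}$ spread over $m$ branches. This is an $m$-fold stacked $A$-type tangency whose delta-invariant is $m\binom{\ell}{2}$. The plan is to compute this delta by Newton polygon methods at the point: the local Newton diagram has $m$ edges of slope $1/\ell$ and size $\ell$, and Kouchnirenko's formula for the delta-invariant gives $m\binom{\ell}{2}$. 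Resolving base points of the pencil realises the normalization.

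\textbf{Step 3: assembly.} The genus equals $\#(P^\circ\cap N)-\sum_E\delta_E$. Subdividing each cone into $T$-triangles and a residual triangle, as in Definition~\ref{def:residual-lattice-points}, Lemma~\ref{lem:age-count} gives
\[
\#(P^\circ\cap N)=1+\sum_E\Bigl(m_E\binom{\ell_E}{2}+\frac{w_{0,E}(\ell_E-1)}2\Bigr).
\]
Here the origin is counted once, and the radial primitive edges contribute no interior points. The $T$-cones contribute interior points of their own and also interior points on shared radial lines; reconciling these two counts is part of the bookkeeping. Subtracting $\sum_E m_E\binom{\ell_E}{2}$ leaves $1+\#(R\cap P^\circ)$.

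\textbf{Main obstacle.} The hard step is Step 2. One must show that the singularity of a general maximally mutable $f$ is no worse than the forced one, so that the delta-invariant is exactly $m\binom{\ell}{2}$, and one must also rule out additional singularities in the torus. For the torus, Bertini applied to the general pencil member should suffice. At the boundary, the first thing to try is a generic nondegeneracy argument for the remaining coefficients after factoring out the binomial part.
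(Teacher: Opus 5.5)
The paper does not prove this statement; it quotes it from Tveiten. Your architecture is the same as the paper's own argument in Proposition~\ref{prop:boundary-delta}: the arithmetic genus $\#(P^\circ\cap N)$ of the toric closure, minus a boundary $\delta$-invariant at $x=-1$ on each $D_E$, followed by the split $w_E(\ell_E-1)/2=m_E\binom{\ell_E}{2}+w_{0,E}(\ell_E-1)/2$.

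However, Step~2, which carries all the content, is wrong as written. Use the coordinates $t=x+1$, $u=y^{-1}$ and the weights $\operatorname{wt}(t)=1$, $\operatorname{wt}(u)=m$. Your slice condition then puts the weighted initial form on a single Newton edge, from $(m\ell,0)$ to $(0,\ell)$, of lattice length $\ell$. It is not $m$ edges of slope $1/\ell$. Your own model $u^\ell\sim t^{m\ell}$ has $\ell$ smooth branches $u=\alpha_jt^m+O(t^{m+1})$ with pairwise contact $m$, and that is what gives $\delta=m\binom{\ell}{2}$. The picture you actually describe, $m$ branches and $m$ pieces of slope $1/\ell$, is the diagram of $u^m\sim t^{m\ell}$. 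That singularity has $\delta=\ell\binom{m}{2}$. So the value you state does not follow from your local description.

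Second, you name the nondegeneracy as the main obstacle but do not prove it, and the genericity you propose is not available. The statement fixes $f$ and varies only the member of the pencil, so neither the residual factor nor the remaining coefficients can be chosen generically. The missing idea is that genericity comes from $\eta$.
\begin{itemize}
\item Write $F_0=t^{m\ell}Q(u/t^m)$ with $Q(z)=\sum_{i=0}^{\ell}c_iz^i$.
\item The coefficients $c_0,\dots,c_{\ell-1}$ are fixed by $f$, and $c_0\neq0$ exactly when $r_E(-1)\neq0$.
\item Only $c_\ell=f_0(-1)-\eta$ moves, and it is affine in $\eta$.
\item A repeated root of $Q$ forces $-c_\ell$ to be a critical value of the nonconstant function $\bigl(\sum_{i<\ell}c_iz^i\bigr)/z^\ell$. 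Hence for general $\eta$ the roots are distinct and nonzero.
\item The implicit function theorem then produces the $\ell$ branches.
\end{itemize}

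Even with this, the argument needs hypotheses on $f$ that you leave implicit.
\begin{itemize}
\item $r_E(-1)\neq0$: otherwise the curve meets $D_E$ at $x=-1$ with multiplicity exceeding $m\ell$, and the diagram changes.
\item Simple roots of $r_E$ and nonzero vertex coefficients: these exclude other boundary singularities. They also keep the closure off the fixed points, so that $p_a=\#(P^\circ\cap N)$ follows by toric adjunction. Khovanskii's theorem cannot be used here, since it requires nondegeneracy.
\item Irreducibility of the general fiber: this is needed to turn $p_a-\sum_E\delta_E$ into a genus.
\end{itemize}
These are exactly the hypotheses of Proposition~\ref{prop:boundary-delta}. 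With them your route proves the formula; without them it does not reach the statement for an arbitrary maximally mutable $f$.

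Finally, the radial-line bookkeeping you worry about in Step~3 is not an issue. Every cut point lies at a lattice distance from a primitive vertex that is a multiple of $\ell$, so it is primitive, and radial segments carry no lattice points besides their endpoints. In any case, Lemma~\ref{lem:age-count} applied to the unsubdivided edge triangle already gives $w_E(\ell_E-1)/2$.
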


The cited theorem does not require rigidity of the maximally mutable Laurent polynomial; ``general'' refers to the member of its pencil.  We use it for general polynomials in the relevant maximally mutable Laurent polynomial families, with the specified Newton polygon.

The polygon can be chosen without assuming a classification conjecture.  By hypothesis, choose a $\Q$-Gorenstein degeneration of $X$ to a toric surface $X_{P_0}$ and take its fan polygon $P_0$.  One may also replace $P_0$ by a mutation-equivalent polygon: mutation preserves the residual basket \cite[Theorem 3 and its proof]{ACC+}.  We do not require the conjectural injectivity of the assignment from mutation classes to $\Q$-Gorenstein deformation classes.

The basket equality is a local deformation statement.  In the normal form \eqref{eq:cyclic-normal-form}, the index-one cover deforms as $xy=z^{w_0}F(z^\ell)$, with $F$ a monic polynomial of degree $m$.  A root at zero of multiplicity $b$ retains a point of width $w_0+b\ell$ with the same residue; multiple nonzero roots produce only smoothable Du Val points.  Thus a nongeneral fiber can still have mixed singularities, not merely a disjoint union of residual and $T$-singularities.  On a locally $\Q$-Gorenstein rigid fiber, however, all remaining smoothable parts must vanish.  Its basket is therefore exactly the residual basket of $P_0$, and hence of $P$ \cite[Basic Concepts; Lemma 6 and proof of Theorem 3]{ACC+}.

\begin{theorem}[The locally rigid mirror-genus comparison]\label{cor:genus-match}

Let $X$ be a locally $\Q$-Gorenstein rigid log del Pezzo surface with cyclic quotient singularities admitting a toric $\Q$-Gorenstein degeneration, let $\cX$ be its canonical stack, and choose a $\Q$-Gorenstein degeneration to $X_{P_0}$.  Let $P$ be $P_0$ or a mutation-equivalent Fano polygon.  For a general maximally mutable Laurent polynomial $f$ with Newton polygon $P$ and a general value $\eta$, the smooth projective model of the fiber satisfies
\[
\boxed{
g_{\mathrm{cat}}(\cX)
=g\bigl(\widetilde{\{f=\eta\}}\bigr).}
\]
\end{theorem}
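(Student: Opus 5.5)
The plan is to compare both sides through the residual basket: compute the left side with Theorem~\ref{thm:cyclic-rank} and the right side with Tveiten's formula. Since $X$ is locally $\Q$-Gorenstein rigid, every singular point $\frac1{n_j}(1,q_j)$ is residual, so $m_j=0$ and $w_j=w_{0,j}<\ell_j$. Theorem~\ref{thm:cyclic-rank} then gives
\[
g_{\mathrm{cat}}(\cX)=1+\frac12\sum_j w_j(\ell_j-1),
\]
and the right-hand side involves only residual data of $X$.

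On the mirror side, Tveiten's theorem in the formulation of \cite[Theorem 3.13]{CKPT} gives $g\bigl(\widetilde{\{f=\eta\}}\bigr)=1+\#(R\cap P^\circ)$ for a general member of the pencil of a general maximally mutable Laurent polynomial $f$ with Newton polygon $P$. By the discussion after Definition~\ref{def:residual-lattice-points}, $R\cap P^\circ$ is the disjoint union of the interiors of the residual triangles $\operatorname{conv}\{0,u_\alpha,v_\alpha\}$. Each such triangle is the cone triangle of a residual singularity $\frac1{w_0\ell}(1,w_0a-1)$. Lemma~\ref{lem:age-count}, applied to this residual cyclic quotient, counts its interior points as $w_0(\ell-1)/2$. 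Hence $\#(R\cap P^\circ)=\frac12\sum_\alpha w_{0,\alpha}(\ell_\alpha-1)$, where the sum runs over the residual basket $\mathcal B$ of $P$, and this count is independent of the subdivision.

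It remains to identify the residual basket of $P$ with the basket of $X$, as multisets of types $(w_0,\ell)$, or equivalently of types $\frac1n(1,q)$. This is the step I expect to carry the weight of the argument, and I would handle it with the local deformation analysis stated just before the theorem. In the $\Q$-Gorenstein degeneration $X\rightsquigarrow X_{P_0}$, each singularity of $X_{P_0}$ of type $(w,\ell,a)$ deforms locally as $xy=z^{w_0}F(z^\ell)$. In this family the residual part $\frac1{w_0\ell}(1,w_0a-1)$ persists at a root of $F$ at zero, and all other roots yield only Du Val $T$-points. Because $X$ is locally rigid, no smoothable parts can remain on it. Its basket is therefore exactly the residual basket of $P_0$, with the rigid points coming in bijection with the residual cones \cite[Lemma 6 and proof of Theorem 3]{ACC+}.

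For a mutation-equivalent $P$, mutation invariance of the residual basket \cite[Theorem 3]{ACC+} transfers this identification to $P$. The main technical care lies in two places. First, one must check that the Du Val points which appear contribute $0$ on both sides; this holds because $w=\ell\cdot m$ forces $w_0=0$, so they are absent from $\mathcal B$. Second, one must check that the residue carries the same $a$, so that the local types, and not merely the orders, match. Once these are settled, summing the matched local contributions gives equality of the two displayed expressions, which proves the boxed identity.
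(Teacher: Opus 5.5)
Your proposal is correct and is essentially the paper's proof: you identify the basket of $X$ with the residual basket of $P_0$ (and hence of $P$) via the local deformation $xy=z^{w_0}F(z^\ell)$ and \cite[Lemma 6 and proof of Theorem 3]{ACC+}, use Lemma~\ref{lem:age-count} to count $w_0(\ell-1)/2$ interior points in each residual triangle, and finish with Tveiten's formula. The paper only differs in routing the categorical side through Theorem~\ref{thm:age-interpretation}, via $\#(R\cap P^\circ)=\dim H^{\operatorname{age}<1}_{\cX}$, rather than quoting Theorem~\ref{thm:cyclic-rank} directly; also, only the pairs $(w_0,\ell)$ need to match for the count, so your extra check on $a$ is harmless but not needed (and matching $(w_0,\ell)$ is not literally equivalent to matching the types $\frac1n(1,q)$).
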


\begin{proof}
By the preceding local argument, the singularities of $X$ are the residues of the cones of $P$.  Lemma~\ref{lem:age-count} identifies the contribution of each residual triangle with its age-$<1$ count and local categorical genus correction.  Summing gives
\[
\#(R\cap P^\circ)=\dim H^{\operatorname{age}<1}_{\cX}.
\]
The maximally mutable Laurent polynomial genus formula \cite[Theorem 3.13]{CKPT} and Theorem~\ref{thm:age-interpretation} now yield
\[
g\bigl(\widetilde{\{f=\eta\}}\bigr)
=1+\#(R\cap P^\circ)
=g_{\mathrm{cat}}(\cX).
\]
\end{proof}

\begin{remark}\label{rem:local-rigidity-needed}
For $\Pj(1,1,4)$, Proposition~\ref{prop:weighted-rank} gives $g_{\mathrm{cat}}=2$, but its $\frac14(1,1)$ point is $\Q$-Gorenstein smoothable and contributes no residual point, so a general maximally mutable Laurent polynomial fiber has genus one.  Thus local $\Q$-Gorenstein rigidity cannot simply be dropped from the comparison with the given stack.  The categorical and age formulas remain valid; Section~\ref{sec:qg-generic} instead compares maximally mutable Laurent polynomial genus with the generic $\Q$-Gorenstein deformation.
\end{remark}

For the Corti--Heuberger families discussed in the introduction, Theorem~\ref{cor:genus-match} identifies the categorical value $k+1$ with mirror genus whenever the family admits a toric $\Q$-Gorenstein degeneration.  Remark~\ref{rem:smooth-blowup} also shows that this value is constant along smooth-locus blow-ups in their cascades.

\begin{example}[The $X_{10}$ mirror polygon]\label{ex:x10-mirror-polygon}
For $X_{10}$, the case $r=1$ gives $g_{\mathrm{cat}}(\cX_{10})=2$.  Its mirror mutation class contains
\[
P=\operatorname{conv}\{(-3,-5),(-3,5),(7,5)\},
\]
with singularity content $(10,\{\frac13(1,1)\})$ \cite[Table 4 and Theorem 9]{KNP}.  Here $10$ counts primitive $T$-cones and the residual basket consists of one $\frac13(1,1)$ point.  The general maximally mutable Laurent polynomial fiber has genus two, matching $g_{\mathrm{cat}}(\cX_{10})=2$.  Proposition~\ref{prop:x10-explicit-model} derives the explicit hyperelliptic model from the Laurent polynomial in \cite[Section 7.3, entry 11]{OP}.
\end{example}

Figure~\ref{fig:x10-subdivision} separates the residual point from the smoothable content; Example~\ref{ex:x10-toric-drop} computes the resulting drop of $34$.
\begin{figure}[htbp]
\centering
\begin{tikzpicture}[x=0.61cm,y=0.61cm,font=\small]
  \fill[orange!16] (-3,-5)--(7,5)--(-3,5)--cycle;
  \fill[blue!22] (0,0)--(-3,-4)--(-3,-5)--cycle;
  \foreach \horizontal/\vertical in {-3/-5,-1/-3,1/-1,3/1,5/3,7/5,2/5,-3/5,-3/2,-3/-1,-3/-4}
    \draw[gray!70,dashed,thin] (0,0)--(\horizontal,\vertical);
  \draw[blue!70!black,thick] (-3,-4)--(0,0)--(-3,-5);
  \draw[thick] (-3,-5)--(7,5)--(-3,5)--cycle;
  \foreach \horizontal/\lowest in {-2/-3,-1/-2,0/-1,1/0,2/1,3/2,4/3,5/4}
    \foreach \vertical in {\lowest,...,4}
      \fill[orange!65!black] (\horizontal,\vertical) circle (1.6pt);
  \fill[blue!75!black] (-2,-3) circle (2.7pt);
  \fill[white] (0,0) circle (3.5pt);
  \fill (0,0) circle (2.4pt) node[above left] {$0$};
  \node[above left] at (-3,5) {$(-3,5)$};
  \node[above right] at (7,5) {$(7,5)$};
  \node[below] at (-3,-5) {$(-3,-5)$};
  \node[above,align=center] at (2,5.6) {$2$ $T$-cones, $\ell=5$};
  \node[anchor=east,align=right] at (-3.35,1.3) {$3$ $T$-cones\\$\ell=3$};
  \node[anchor=west,align=left] at (4,0.1) {$5$ $T$-cones\\$\ell=2$};
  \node[anchor=east,align=right,text=blue!70!black] at (-3.4,-3.9) {residual cone\\$w_0=1$, $\ell=3$};
  \draw[blue!70!black,->] (-3.4,-3.9)--(-2.55,-3.85);
  \node[anchor=west,align=left] at (1,-3.5)
    {$34$ orange points: $T$-content\\
     $1$ blue point $+$ origin: genus $2$};
\end{tikzpicture}
\caption{A crepant subdivision of the $X_{10}$ mirror polygon into ten primitive $T$-cones (orange) and one residual cone (blue).  The $36$ interior points comprise $34$ points in the $T$-triangles, one residual point, and the origin.  Thus the toric canonical stack has categorical genus $36$, while the residual and maximally mutable Laurent polynomial genera are $2$.}
\label{fig:x10-subdivision}
\end{figure}

The polygon has substantial smoothable $T$-content.  What happens to categorical genus when that content is removed?  The next section answers this question under general $\Q$-Gorenstein deformation and identifies the surviving value with mirror genus.

\section{\texorpdfstring{$\Q$-Gorenstein}{Q-Gorenstein} deformation and the mirror-side genus drop}\label{sec:qg-generic}

Section~\ref{sec:mirror} distinguishes the categorical genus of the given canonical stack, which retains both residual and smoothable local contributions, from the maximally mutable Laurent polynomial genus, which retains only the residual part.  The categorical object to compare with the mirror is therefore the canonical stack of a sufficiently general $\Q$-Gorenstein deformation.  What survives this deformation, and where does the discarded genus appear on the compactified mirror curve?  We answer the first question by the residual formula and the exact $T$-content drop, neither of which requires a toric degeneration.  With a toric degeneration, the surviving value is mirror genus; under the boundary hypotheses below, the discarded part is the total boundary $\delta$-invariant.

\begin{lemma}[The generic residual basket]\label{lem:generic-residual-basket}
Let $X$ be a log del Pezzo surface with cyclic quotient singularities.  A sufficiently small miniversal $\Q$-Gorenstein deformation $\mathfrak X\to(B,0)$, with $X_0=X$, has a smooth irreducible base germ and a nonempty open locus $B^\circ$ on which the fibers $X_t$ have exactly the residual basket of $X$.  After shrinking $B$, these fibers remain log del Pezzo.
\end{lemma}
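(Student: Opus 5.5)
The plan is to prove the lemma in three steps: unobstructedness of the global deformation functor, a local-to-global surjectivity that lets each singularity be deformed independently, and an explicit local analysis in the normal form \eqref{eq:cyclic-normal-form}.

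\emph{Step 1: smooth base.} For a log del Pezzo surface $X$ with quotient singularities, global $\Q$-Gorenstein deformations are unobstructed. The local-to-global spectral sequence for the $\Q$-Gorenstein tangent sheaves $\mathcal T^i_{qG}$ places the global obstructions in $H^2(X,T_X)$ and $H^1(X,\mathcal T^1_{qG})$. The second group vanishes because $\mathcal T^1_{qG}$ is supported at finitely many points. For the first, Serre duality on the canonical stack identifies $H^2(\cX,T_{\cX})$ with $H^0(\cX,\Omega_{\cX}\otimes\omega_{\cX})^\vee$. This group vanishes because $-K$ is ample and the minimal resolution is rational; one way to see it is to pull back to $Y$ and use that $\Omega_Y(K_Y+\text{exceptional})$ has no sections. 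This is the argument of \cite[Basic Concepts]{ACC+} and Hacking--Prokhorov, which I would cite rather than redo. Local $\Q$-Gorenstein deformations of cyclic quotient singularities are also unobstructed: each is an equivariant deformation of $xy=z^w$, which is a hypersurface. Hence the miniversal base germ $B$ is smooth, in particular irreducible. The same vanishing $H^2(X,T_X)=0$ shows that the map from global to local deformations,
\[
\mathrm{Def}^{qG}_X\longrightarrow\prod_j\mathrm{Def}^{qG}_{p_j},
\]
is smooth, hence surjective on tangent spaces and submersive.

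\emph{Step 2: local generic fiber.} In the normal form \eqref{eq:cyclic-normal-form}, the $\mu_\ell$-equivariant miniversal deformation of $xy=z^w$ is
\[
xy=z^{w_0}F(z^\ell)+(\text{terms not surviving the invariance}),
\]
with $F$ monic of degree $m$ and arbitrary lower coefficients. This is the description already recalled before Theorem~\ref{cor:genus-match}, and I would use it as given. On the open dense subset of the local base where $F$ has $m$ distinct nonzero roots, the fiber has exactly one singular point: the residual point $\frac1{w_0\ell}(1,w_0a-1)$ when $w_0>0$, and none when $w_0=0$. The nonzero roots give smooth points after the free $\mu_\ell$-quotient. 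Conversely, a root of multiplicity $b$ at zero retains width $w_0+b\ell$. So the good locus is the complement of a proper closed discriminant-type subset, and it is nonempty and open.

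\emph{Step 3: globalize.} Let $B^\circ$ be the preimage of the product of the local good loci under the smooth, hence open, map of Step 1. It is open, and it is nonempty and dense because $B$ is irreducible and the map is dominant. Away from the singular points, fibers near $X_0$ acquire no new singularities, since smoothness is an open condition and the family is locally trivial near the smooth locus. Therefore fibers over $B^\circ$ have exactly the residual basket. For log del Pezzo, $K_{\mathfrak X/B}$ is $\Q$-Cartier by the $\Q$-Gorenstein hypothesis. Ampleness of $-K_{X_t}$ is then open in proper flat families, and quotient singularities are preserved, so shrinking $B$ keeps every fiber log del Pezzo.

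The main obstacle is Step 1: the vanishing of $H^2(T_X)$ in the $\Q$-Gorenstein sense and the resulting local-to-global surjectivity. I would rely on the cited ACC+/Hacking--Prokhorov statement. If that citation is not enough, I would try the stack Serre-duality computation together with Kawamata--Viehweg-type vanishing on $Y$.
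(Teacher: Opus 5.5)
Your proposal is correct and takes the same route as the paper. The paper cites \cite[Lemma 6]{ACC+} for unobstructedness and formal smoothness of the global-to-local map, which is your Step 1. It then takes the preimage of the dense open local locus in the normal form $xy=z^{w_0}F(z^\ell)$, uses irreducibility of the smooth base for nonemptiness, and concludes with openness of ampleness and the absence of new singularities away from the original points. Your Step 2 makes explicit the local check that the paper states in one line: $F$ has distinct nonzero roots, and the $\mu_\ell$-action is free off the origin.
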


\begin{proof}
By \cite[Lemma 6]{ACC+}, $\Q$-Gorenstein deformations of $X$ are unobstructed and restriction to the product of the local deformation functors is formally smooth.  On miniversal base germs this gives a smooth, hence dominant, map to the product of the smooth local bases.  The local normal form in \eqref{eq:cyclic-normal-form} has a dense open deformation locus removing all $T$-content and keeping exactly its residue.  The inverse image of the product of these loci is nonempty and open in the irreducible global base.  Openness of ampleness keeps the fibers log del Pezzo after shrinking, and smoothness away from neighborhoods of the original singularities prevents additional singular points.  This gives the required locus $B^\circ$.
\end{proof}

\begin{definition}[Categorical genus under general $\Q$-Gorenstein deformation]\label{def:qg-generic-genus}
Choose a miniversal family $\mathfrak X\to(B,0)$ as in Lemma~\ref{lem:generic-residual-basket}.  For $t\in B^\circ$, let $\mathcal X_t$ denote the canonical stack of the fiber $X_t$.  Define
\[
g_{\mathrm{cat}}^{\mathrm{qG}}(X)
=g_{\mathrm{cat}}^{\mathrm{res}}(X)
:=g_{\mathrm{cat}}(\mathcal X_t).
\]
The superscripts $\mathrm{res}$ and $\mathrm{qG}$ indicate, respectively, the residual basket and the $\Q$-Gorenstein deformation used to compute the invariant.
\end{definition}

A sufficiently general deformation removes all $T$-content and retains exactly the residual basket, by Lemma~\ref{lem:generic-residual-basket}.  Theorem~\ref{thm:cyclic-rank} therefore makes the value independent of the general member and the miniversal realization, with no choice of a local deformation component.  This concerns the miniversal germ at $X$, not components of a global moduli space.  The numerical consequence is the following residual formula.

\begin{proposition}[Residual formula]\label{prop:qg-residual-genus}
Let $X$ be a cyclic quotient log del Pezzo surface, and use the local data $(w_j,\ell_j,m_j,w_{0,j})$ of Section~\ref{sec:mirror} at its singular points.  The categorical genus under general $\Q$-Gorenstein deformation is
\begin{equation}\label{eq:qg-generic-genus}
\boxed{g_{\mathrm{cat}}^{\mathrm{qG}}(X)
=1+\frac12\sum_j w_{0,j}(\ell_j-1).}
\end{equation}
\end{proposition}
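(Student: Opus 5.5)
The plan is to apply Theorem~\ref{thm:cyclic-rank} to the canonical stack $\mathcal X_t$ of a general fiber $X_t$, $t\in B^\circ$, as supplied by Lemma~\ref{lem:generic-residual-basket}. By Definition~\ref{def:qg-generic-genus}, $g_{\mathrm{cat}}^{\mathrm{qG}}(X)=g_{\mathrm{cat}}(\mathcal X_t)$. Two things must be verified before the theorem can be used. First, $X_t$ is a log del Pezzo surface with cyclic quotient singularities; the lemma gives this after shrinking $B$. Second, the basket of $X_t$ is exactly the residual basket of $X$. The theorem then computes $g_{\mathrm{cat}}(\mathcal X_t)$ purely from the local pairs $(w,\ell)$ of the singularities of $X_t$, with no further dependence on the global geometry.

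The substantive step is to identify the local invariants of the residual singularities. Start from the normal form \eqref{eq:cyclic-normal-form}, $\frac1{w_j\ell_j}(1,w_ja_j-1)$, at the $j$th point. If $w_{0,j}=0$, the point has no residual part, so $X_t$ is smooth nearby and it contributes nothing. If $w_{0,j}>0$, the residue is $\frac1{w_{0,j}\ell_j}(1,w_{0,j}a_j-1)$. I would recompute its width as $\gcd\bigl(w_{0,j}\ell_j,w_{0,j}a_j\bigr)=w_{0,j}\gcd(\ell_j,a_j)=w_{0,j}$, using $\gcd(a_j,\ell_j)=1$. Its index is then $w_{0,j}\ell_j/w_{0,j}=\ell_j$. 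I also need $\gcd(w_{0,j}\ell_j,\,w_{0,j}a_j-1)=1$ so that this is a genuine cyclic quotient of the stated type. This holds because any common divisor divides $w_{0,j}a_j-1$, is therefore coprime to $w_{0,j}$, and so must divide both $\ell_j$ and $w_{0,j}a_j-1$. In addition, $\gcd(\ell_j,w_ja_j-1)=1$ because the original $q_j$ is coprime to $n_j$, and $w_{0,j}a_j\equiv w_ja_j\pmod{\ell_j}$.

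Substituting $(w_{0,j},\ell_j)$ into Theorem~\ref{thm:cyclic-rank} gives local contribution $\frac12 w_{0,j}(\ell_j-1)$. This expression vanishes when $w_{0,j}=0$, so the absent points are correctly accounted for. Summing the contributions and adding the untwisted term $1$ yields \eqref{eq:qg-generic-genus}. The same local computation shows the value is independent of $t\in B^\circ$ and of the chosen miniversal family, since only the basket enters.

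The main obstacle is justifying that a general fiber has \emph{exactly} the residual basket. Two failures must be excluded: new singularities appearing elsewhere on $X_t$, and mixed singularities of width $w_{0,j}+b\ell_j$ with $b>0$ surviving at special $t$. Lemma~\ref{lem:generic-residual-basket} handles both. Formal smoothness onto the product of local deformation spaces lets us choose $t$ so that each local polynomial $F$ in $xy=z^{w_0}F(z^\ell)$ has $m$ distinct nonzero roots. Smoothness away from the original singular points is open in families. I would cite that lemma and the local deformation description rather than re-prove them.
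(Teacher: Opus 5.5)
Your proposal is correct and follows the paper's own proof essentially step for step. You invoke Lemma~\ref{lem:generic-residual-basket} to pass to the residual basket, keep the same $a$ in the normal form so that the residue is $\frac1{w_0\ell}(1,w_0a-1)$, verify that it is a reduced cyclic quotient with width $\gcd(w_0\ell,w_0a)=w_0$ and index $\ell$, and read off the local contribution $w_0(\ell-1)/2$ from Theorem~\ref{thm:cyclic-rank}. Your common-divisor argument for $\gcd(w_0\ell,w_0a-1)=1$ is only a repackaging of the paper's two congruences $w_0a-1\equiv q\pmod{\ell}$ and $w_0a-1\equiv-1\pmod{w_0}$.
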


\begin{proof}
Use the normal form \eqref{eq:cyclic-normal-form}, with $a=(q+1)/w$ and $\gcd(a,\ell)=1$.  Lemma~\ref{lem:generic-residual-basket} removes the $m\ell$-part of the width, leaving the residue with the same $a$:
\[
\frac1{w_0\ell}(1,w_0a-1)
\quad\text{if }w_0>0,
\]
and there is no residual point if $w_0=0$.  This is a reduced cyclic quotient: since $w_0a-1\equiv wa-1=q\pmod\ell$, it is coprime to $\ell$, and $w_0a-1\equiv-1\pmod{w_0}$ shows that it is also coprime to $w_0$.  The greatest common divisor determining its width is
\[
\gcd(w_0\ell,w_0a)=w_0.
\]
Theorem~\ref{thm:cyclic-rank} therefore gives local genus contribution
$(w_0\ell-w_0)/2=w_0(\ell-1)/2$.  Summing over the generic residual basket proves \eqref{eq:qg-generic-genus}.
\end{proof}

Thus $g_{\mathrm{cat}}^{\mathrm{qG}}$ is obtained from the cyclic basket formula simply by replacing each width $w_j$ by its residual width $w_{0,j}$.  The remaining contribution measures precisely the categorical genus lost when the smoothable $T$-content is removed.

\begin{corollary}[The $T$-content drop]\label{cor:t-content-drop}
For a cyclic quotient log del Pezzo surface $X$, with the notation of Proposition~\ref{prop:qg-residual-genus}, one has
\begin{equation}\label{eq:t-content-drop}
\boxed{g_{\mathrm{cat}}(\cX)-g_{\mathrm{cat}}^{\mathrm{qG}}(X)
=\sum_j m_j\binom{\ell_j}{2}.}
\end{equation}
Equivalently, for $t\in B^\circ$ in Lemma~\ref{lem:generic-residual-basket}, the ranks of the skew Euler forms of $\cX$ and the canonical stack $\mathcal X_t$ of $X_t$ satisfy
\[
\boxed{\rk_{\Q}\chi^-_{\cX}-\rk_{\Q}\chi^-_{\mathcal X_t}
=\sum_j m_j\ell_j(\ell_j-1).}
\]
\end{corollary}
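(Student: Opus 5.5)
The plan is to subtract the residual formula of Proposition~\ref{prop:qg-residual-genus} from the cyclic basket formula of Theorem~\ref{thm:cyclic-rank}, one singular point at a time, and then double the result to get the rank statement. Both formulas are sums over the same index set $j$: the singular points of $X$. Points with $w_{0,j}=0$ contribute nothing to the residual sum, so they can be kept in that sum with zero contribution. The global constants $1$ cancel, and
\[
g_{\mathrm{cat}}(\cX)-g_{\mathrm{cat}}^{\mathrm{qG}}(X)
=\frac12\sum_j\bigl(w_j-w_{0,j}\bigr)(\ell_j-1).
\]

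The local step uses the Euclidean division $w_j=m_j\ell_j+w_{0,j}$ from Section~\ref{sec:mirror}. It gives $w_j-w_{0,j}=m_j\ell_j$, so each summand equals
\[
\frac12 m_j\ell_j(\ell_j-1)=m_j\binom{\ell_j}{2}.
\]
This proves \eqref{eq:t-content-drop}. It is the same splitting $w(\ell-1)/2=m\binom{\ell}{2}+w_0(\ell-1)/2$ recorded in Figure~\ref{fig:mixed-cone-schematic}.

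For the rank version, we fix $t\in B^\circ$. By Lemma~\ref{lem:generic-residual-basket}, $X_t$ is a log del Pezzo surface with cyclic quotient singularities whose basket is exactly the residual basket of $X$. Theorem~\ref{thm:cyclic-rank} therefore applies to $\mathcal X_t$, and Definition~\ref{def:categorical-genus} gives $\rk_{\Q}\chi^-=2g_{\mathrm{cat}}$ for both stacks. Multiplying the genus identity by two gives $\sum_j m_j\ell_j(\ell_j-1)$.

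The only point needing care is that the residual singularity $\frac1{w_0\ell}(1,w_0a-1)$ has width exactly $w_0$ and index exactly $\ell$. Otherwise Theorem~\ref{thm:cyclic-rank} would not produce the contribution $w_0(\ell-1)/2$. This was already checked in the proof of Proposition~\ref{prop:qg-residual-genus}, so here we only invoke it. With that in place, the argument is bookkeeping.
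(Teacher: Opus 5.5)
Your proposal is correct and is essentially the paper's own proof. The paper likewise subtracts the local term $w_{0,j}(\ell_j-1)/2$ of Proposition~\ref{prop:qg-residual-genus} from the term $w_j(\ell_j-1)/2$ of Theorem~\ref{thm:cyclic-rank}, uses $w_j-w_{0,j}=m_j\ell_j$ to get $m_j\binom{\ell_j}{2}$, sums over $j$, and doubles for the rank statement. Your remarks on points with $w_{0,j}=0$ and on applying Theorem~\ref{thm:cyclic-rank} to $\mathcal X_t$ via Lemma~\ref{lem:generic-residual-basket} just make explicit the bookkeeping the paper leaves implicit.
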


\begin{proof}
Theorem~\ref{thm:cyclic-rank} gives local genus correction
$(n_j-w_j)/2=w_j(\ell_j-1)/2$ on $\cX$, whereas Proposition~\ref{prop:qg-residual-genus} gives $w_{0,j}(\ell_j-1)/2$ on the generic stack.  Their difference is
\[
\frac{w_j(\ell_j-1)}2-\frac{w_{0,j}(\ell_j-1)}2
=\frac{m_j\ell_j(\ell_j-1)}2
=m_j\binom{\ell_j}{2}.
\]
Summing proves \eqref{eq:t-content-drop}; doubling gives the skew-rank formula.
\end{proof}

The corollary separates the categorical genus into its surviving residual term and the exact contribution of the smoothable $T$-content:
\[
\boxed{g_{\mathrm{cat}}(\cX)
=g_{\mathrm{cat}}^{\mathrm{qG}}(X)+\sum_jm_j\binom{\ell_j}{2}.}
\]
The categorical side is now completely separated into a deformation-invariant residual contribution and a smoothable contribution.  Residual points have $m_j=0$ and leave the genus unchanged; at a pure $T$-singularity, $w_{0,j}=0$, so the entire local contribution disappears.  In particular, a $\frac14(1,1)$ point has $(m,\ell)=(1,2)$ and contributes a drop of $1$.  This gives $g_{\mathrm{cat}}^{\mathrm{qG}}(\Pj(1,1,4))=1$ and explains the drop of $2$ in Petracci's example.

The preceding formulas are intrinsic to $X$.  When $X$ is toric, both pieces acquire a lattice interpretation.  We first count all interior lattice points of the full polygon, before removing any $T$-content.

\begin{proposition}[Toric categorical genus]\label{prop:toric-categorical-genus}
Let $P\subset N_{\mathbb R}$ be a Fano polygon in a rank-two lattice $N$, and let $X_P$ be the toric del Pezzo surface whose fan consists of the cones over the faces of $P$.  Its canonical stack satisfies
\begin{equation}\label{eq:toric-categorical-genus}
\boxed{g_{\mathrm{cat}}(\mathcal X_P)=\#(P^\circ\cap N).}
\end{equation}
\end{proposition}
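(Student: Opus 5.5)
The plan is to combine the age interpretation (Theorem~\ref{thm:age-interpretation}) with the local lattice count of Lemma~\ref{lem:age-count}, and to decompose $P$ into the cone triangles over its edges. The toric surface $X_P$ is a log del Pezzo surface: $-K_{X_P}$ is ample because $P$ is a Fano polygon, and its singularities are the cyclic quotients attached to the two-dimensional cones of the fan. The formula of Theorem~\ref{thm:cyclic-rank} therefore applies to the canonical stack $\mathcal X_P$. Smooth cones contribute nothing, since they have $n=1$ and no twisted sectors. It will suffice to show
\[
\#(P^\circ\cap N)=1+\sum_{\sigma}\#(\Delta_\sigma^\circ\cap N),
\]
where $\sigma$ runs over the maximal cones and $\Delta_\sigma=\operatorname{conv}\{0,u_\sigma,v_\sigma\}$. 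This uses the fact that $u_\sigma,v_\sigma$ are the two vertices of $P$ bounding the corresponding edge.

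First, for each edge $E$ of $P$ with endpoints $u,v$, identify the cone over $E$ as a cyclic quotient $\frac1n(1,q)$. Its primitive generators are $u,v$ because vertices of a Fano polygon are primitive, and $n=|\det(u,v)|$. Lemma~\ref{lem:age-count} then gives $\#(\Delta^\circ\cap N)=(n-w)/2$. The lemma is stated in the quotient lattice with $v_1,v_2$ the standard basis, which is the same as working in $N$ with $u,v$ as the cone generators, so no translation is needed. Summing over edges and applying Theorem~\ref{thm:cyclic-rank} gives $g_{\mathrm{cat}}(\mathcal X_P)=1+\sum_\sigma\#(\Delta_\sigma^\circ\cap N)$.

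Next, partition $P^\circ\cap N$ using the triangulation of $P$ into the triangles $\Delta_\sigma$. Every interior lattice point of $P$ lies in exactly one of three places:
\begin{itemize}
\item the origin;
\item the interior of some $\Delta_\sigma$;
\item the relative interior of a radial segment $[0,u]$ for a vertex $u$.
\end{itemize}
Radial segments contain no lattice points other than their endpoints, because $u$ is primitive. Points on the outer edges $E$ lie on $\partial P$ and so are excluded from $P^\circ$. The triangle interiors are disjoint and lie in $P^\circ$, because $0\in P^\circ$. This gives the displayed identity, and hence \eqref{eq:toric-categorical-genus}.

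The main point of care is the bookkeeping in the first step. One must check that the lattice-point count in Lemma~\ref{lem:age-count}, which is phrased via $N=\Z^2+\Z\frac1n(1,q)$, matches interior points of $\Delta_\sigma$ in the original lattice $N$. This amounts to choosing the sublattice $\Z u\oplus\Z v$ as the $\Z^2$ of the lemma. One must also check that the canonical stack of $X_P$ has exactly these cyclic stabilizers, which holds because the canonical stack is the toric stack with the unmodified primitive ray generators. Beyond that, the argument is only the partition of lattice points.
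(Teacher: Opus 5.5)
Your proposal is correct and follows essentially the same route as the paper: Lemma~\ref{lem:age-count} matches the interior lattice points of each edge triangle with the local correction $(n-w)/2$, Theorem~\ref{thm:cyclic-rank} sums these corrections, and primitivity of the vertices empties the radial segments, so $P^\circ\cap N$ splits into the origin and the disjoint triangle interiors. The opening appeal to Theorem~\ref{thm:age-interpretation} is unnecessary, since you actually apply Theorem~\ref{thm:cyclic-rank} directly, exactly as the paper does.
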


\begin{proof}
Order the primitive vertices $v_1,\dots,v_r$ counterclockwise, with $v_{r+1}=v_1$, and put $\Delta_i=\operatorname{conv}\{0,v_i,v_{i+1}\}$.  The cone over the $i$th edge has determinant $n_i>0$, edge width $w_i$, and local Gorenstein index $\ell_i$, with $n_i=w_i\ell_i$.  For a singular cone of type $\frac1{n_i}(1,q_i)$, its width is $w_i=\gcd(n_i,q_i+1)$.  For a smooth cone we take $n_i=w_i=\ell_i=1$, so its local correction is zero.

Lemma~\ref{lem:age-count} gives $\#(\Delta_i^\circ\cap N)=(n_i-w_i)/2$, the local genus correction in Theorem~\ref{thm:cyclic-rank}; for a smooth cone both numbers are zero.  The triangles cover $P$ and meet along radial segments.  Such a segment contains no lattice points other than $0$ and its primitive endpoint, which lies on $\partial P$.  Thus every nonzero lattice point in $P^\circ$ belongs to exactly one triangle interior.  The origin is itself an interior lattice point of the Fano polygon and contributes the initial $1$.  Consequently
\[
\#(P^\circ\cap N)
=1+\sum_i\frac{n_i-w_i}{2}
=g_{\mathrm{cat}}(\mathcal X_P).
\]
\end{proof}

\begin{remark}[A toric numerical consistency check]\label{rem:toric-numerical-check}
In the setting of Proposition~\ref{prop:toric-categorical-genus}, let $r$ be the number of rays and $n_i$ the cone determinants, including $n_i=1$ for smooth cones.  The toric divisor sequence gives $\rho(X_P)=r-2$ \cite[Theorem 4.1.3 and Proposition 4.2.7]{CLS}.  Proposition~\ref{prop:numerical-inertia} and the untwisted decomposition in Lemma~\ref{lem:untwisted-rank} therefore give
\[
\dim_{\Q}K_0^{\mathrm{num}}(\mathcal X_P)_{\Q}
=2+(r-2)+\sum_{i=1}^r(n_i-1)
=\sum_{i=1}^r n_i=2\operatorname{Area}_N(P),
\]
where a fundamental parallelogram of $N$ has area one.  This uses the canonical stack, with no generic or divisorial stabilizers.  Writing $I=\#(P^\circ\cap N)$ and $B=\#(\partial P\cap N)$, Proposition~\ref{prop:toric-categorical-genus} and Pick's theorem yield
\[
\rk_{\Q}\chi^-_{\mathcal X_P}=2I,
\qquad
2\operatorname{Area}_N(P)=2I+B-2.
\]
By Lemma~\ref{lem:skew-serre}, the kernel on numerical $K$-theory consequently satisfies
\[
\dim_{\Q}\ker(1-S_{\mathcal X_P})=B-2.
\]
Together with the general Laurent-polynomial genus in Corollary~\ref{cor:toric-laurent-genus}, these identities provide a numerical consistency check for the comparison between the Euler and Seifert pairings \cite[Sections 4.2 and 5]{HarderThompson}.
\end{remark}

The same full interior-lattice-point count gives the genus of a general Laurent-polynomial fiber, providing the first bridge from categorical genus to curve geometry.

\begin{corollary}[General Laurent-polynomial fibers]\label{cor:toric-laurent-genus}
With $P$ as in Proposition~\ref{prop:toric-categorical-genus}, let $h\in\C[N]$ be a general Laurent polynomial with Newton polygon $P$.  For general $\eta$, the smooth projective model of its fiber satisfies
\[
\boxed{g_{\mathrm{cat}}(\mathcal X_P)
=g\bigl(\widetilde{\{h=\eta\}}\bigr).}
\]
\end{corollary}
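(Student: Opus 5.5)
By Proposition~\ref{prop:toric-categorical-genus}, the left side equals $\#(P^\circ\cap N)$. It therefore suffices to prove the classical statement that a general Laurent polynomial $h$ with Newton polygon $P$ has a general fiber whose smooth projective model has genus $\#(P^\circ\cap N)$. This is Khovanskii's genus formula for nondegenerate curves in a toric surface (equivalently Baker's theorem), and the plan is to reprove it in the form needed.

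First, replace the fiber $\{h=\eta\}$ by the zero locus of $h-\eta$. Since $P$ is Fano, the origin lies in $P^\circ$, so subtracting a constant does not change the Newton polygon. For general coefficients and general $\eta$, the polynomial $h-\eta$ is nondegenerate with respect to $P$: for every face $\tau$ of $P$, the truncation $(h-\eta)_\tau$ has no singular zeros in the torus. This holds by Bertini applied to the linear system spanned by the monomials of $P$ on the torus and on each torus orbit. Write $X_{\check P}$ for the projective toric surface defined by the normal fan of $P$; this is the surface in which the Newton polygon is a moment polygon, not the fan polygon. Nondegeneracy then gives that the closure $\overline C$ of the fiber in $X_{\check P}$ is a curve in the linear system $|L_P|$ that misses the torus-fixed points. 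It meets each boundary divisor $D_\tau$ transversally, in $\ell(\tau)$ points, where $\ell(\tau)$ is the lattice length of $\tau$.

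The second step is to note that $\overline C$ is smooth, because it avoids the singular points of $X_{\check P}$, all of which are torus-fixed. Hence $\overline C$ is the smooth projective model. Adjunction on the toric surface, with $K=-\sum_\tau D_\tau$ and $L=L_P$, gives
\[
2g-2=L^2+K\cdot L=2\operatorname{Area}_N(P)-\#(\partial P\cap N).
\]
Here $L_P^2=2\operatorname{Area}(P)$ and $-K\cdot L_P=\sum_\tau\ell(\tau)=B$. Pick's theorem, in the form $2\operatorname{Area}=2I+B-2$ recorded in Remark~\ref{rem:toric-numerical-check}, yields $g=I$. Combining this with Proposition~\ref{prop:toric-categorical-genus} proves the corollary.

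The main obstacle is the genericity step. One must check that nondegeneracy holds for a general $h$ together with a general value $\eta$, not merely for a general polynomial. The plan is to observe that $(h,\eta)\mapsto h-\eta$ maps dominantly onto the space of polynomials with Newton polygon $P$, since the constant coefficient is free. Generic nondegeneracy is an open dense condition there: away from the torus-fixed points the linear system is base-point-free on each stratum. The intersection-number computations on the possibly singular surface $X_{\check P}$ are standard, since $L_P$ is an ample Cartier divisor; they can be cited from \cite{CLS}.
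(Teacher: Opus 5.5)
Your argument is correct and follows the paper's route: you establish nondegeneracy of $h-\eta$ for general coefficients and general $\eta$, compute the genus of the resulting nondegenerate curve, and then apply Proposition~\ref{prop:toric-categorical-genus}. The only real difference is that the paper cites Khovanskii's formula, whereas you rederive it by toric adjunction on the normal-fan surface and Pick's theorem.

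The one step you leave implicit is irreducibility, which the paper obtains from Bertini. Adjunction only computes the arithmetic genus $1-\chi(\mathcal O_{\overline C})$. You should therefore add that a smooth member of the ample system $|L_P|$ is connected, hence irreducible, before identifying that number with the genus of the smooth projective model.
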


\begin{proof}
For general coefficients and $\eta$, the polynomial $h-\eta$ is nondegenerate with respect to $P$: for each face, including $P$ itself, its face polynomial and logarithmic partial derivatives have no common zero in the torus.  A general member is irreducible by Bertini for the toric linear system of the two-dimensional polygon $P$.  For its smooth projective model $C$, Khovanskii's formula \cite[Section 1, Theorem 1]{KhovanskiiGenus} gives $\chi(\mathcal O_C)=1-\#(P^\circ\cap N)$, hence
$g(C)=\#(P^\circ\cap N)$.  Proposition~\ref{prop:toric-categorical-genus} identifies this with categorical genus.
\end{proof}

Here the curve is compactified using the normal fan of its Newton polygon, distinct from the fan over the faces of $P$ defining $X_P$.  Maximal mutability changes the genus count: a general Laurent polynomial records all interior lattice points, whereas a general maximally mutable Laurent polynomial records only the residual interior points together with the origin.  It is therefore the residual categorical genus that enters the mirror comparison.

\begin{corollary}[$\Q$-Gorenstein generic mirror genus]\label{cor:qg-generic-mirror-genus}
Let $X$ be a cyclic quotient log del Pezzo surface admitting a toric $\Q$-Gorenstein degeneration, and choose $X_t$ as in Lemma~\ref{lem:generic-residual-basket}.  Let $P$ be a Fano polygon whose residual basket agrees with that of $X_t$; in particular one may use a polygon of a chosen toric $\Q$-Gorenstein degeneration of $X$, or a mutation-equivalent polygon.  For a general maximally mutable Laurent polynomial $f$ with Newton polygon $P$ and general $\eta$, one has
\begin{equation}\label{eq:qg-generic-mirror-genus}
\boxed{g_{\mathrm{cat}}^{\mathrm{qG}}(X)
=1+\#(R\cap P^\circ)
=g\bigl(\widetilde{\{f=\eta\}}\bigr),}
\end{equation}
where $R$ is the residual set of $P$.
\end{corollary}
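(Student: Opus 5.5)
The plan is to chain three earlier results, with the comparison of residual baskets as the only genuinely new input. First, Proposition~\ref{prop:qg-residual-genus} gives $g_{\mathrm{cat}}^{\mathrm{qG}}(X)=1+\frac12\sum_j w_{0,j}(\ell_j-1)$. Here the sum runs over the residual basket of $X$. By Lemma~\ref{lem:generic-residual-basket} this is exactly the basket of the general fiber $X_t$, and $X_t$ is locally $\Q$-Gorenstein rigid.

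Second, I will rewrite the right-hand side as a lattice count on $P$. Choose a crepant subdivision of the spanning fan of $P$ into primitive $T$-cones and residual cones $C_\alpha$, as in Definition~\ref{def:residual-lattice-points}. The residual cone attached to an edge of $P$ of width $w=m\ell+w_0$ and height $\ell$ is a cone of type $\frac1{w_0\ell}(1,w_0a-1)$. By Lemma~\ref{lem:age-count}, the interior of its triangle $\operatorname{conv}\{0,u_\alpha,v_\alpha\}$ contains exactly $w_0(\ell-1)/2$ lattice points. This count does not depend on where the $T$-part was cut, by the remark following Definition~\ref{def:residual-lattice-points}.

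Primitive radial edges contain no lattice points besides their endpoints. Hence $R\cap P^\circ$ is the disjoint union of these triangle interiors, and summing over edges gives
\[
\#(R\cap P^\circ)=\sum_{\text{residual cones of }P}\frac{w_0(\ell-1)}2 .
\]

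The main obstacle is matching the multiset of residual cones of $P$ with the residual basket of $X_t$. This is where the hypothesis that $P$ has the same residual basket as $X_t$ is used. When $P=P_0$ comes from a toric $\Q$-Gorenstein degeneration of $X$, I would argue as in the discussion before Theorem~\ref{cor:genus-match}, which rests on \cite[Lemma 6 and proof of Theorem 3]{ACC+}:
\begin{itemize}
\item $X_t$ lies in the same $\Q$-Gorenstein deformation class as $X$, and hence degenerates to $X_{P_0}$;
\item a locally rigid fiber of such a family has basket equal to the residual basket of $P_0$, by the local normal form $xy=z^{w_0}F(z^\ell)$.
\end{itemize}
The second point requires that $X$ and $X_t$ be connected through a $\Q$-Gorenstein family that still degenerates to $X_{P_0}$, which I would check by taking $X_t$ in the miniversal family of the degeneration. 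Mutation-equivalent polygons have the same residual basket by \cite[Theorem 3]{ACC+}, so the statement extends to them. Since both sides of the genus formula depend only on the basket, this gives $g_{\mathrm{cat}}^{\mathrm{qG}}(X)=1+\#(R\cap P^\circ)$.

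Finally, Tveiten's maximally mutable Laurent polynomial genus formula \cite[Theorem 3.13]{CKPT}, applied to a general $f$ with Newton polygon $P$ and a general value $\eta$, gives $g(\widetilde{\{f=\eta\}})=1+\#(R\cap P^\circ)$. Combining this with the previous step proves \eqref{eq:qg-generic-mirror-genus}.

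Alternatively, one could pass through Theorem~\ref{cor:genus-match} applied to the locally rigid surface $X_t$. In that route $g_{\mathrm{cat}}^{\mathrm{qG}}(X)=g_{\mathrm{cat}}(\mathcal X_t)$ by Definition~\ref{def:qg-generic-genus}, and the only remaining task is to verify that $X_t$ admits a toric $\Q$-Gorenstein degeneration to $X_{P_0}$. That verification is the same obstacle as above.
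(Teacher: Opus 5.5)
Your proposal is correct and follows the paper's proof. It uses Proposition~\ref{prop:qg-residual-genus} for the left side and Lemma~\ref{lem:age-count} on each residual triangle (width $w_0$, height $\ell$) to count $\#(R\cap P^\circ)$. The basket hypothesis then matches the two sums, and Tveiten's formula \cite[Theorem 3.13]{CKPT} gives the curve genus.

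The ``obstacle'' you raise for $P=P_0$ can be avoided. The local normal form $xy=z^{w_0}F(z^\ell)$ shows that residues are preserved under any $\Q$-Gorenstein deformation. So the degeneration $X\rightsquigarrow X_{P_0}$ already shows that $X$ has the residual basket of $P_0$, and Lemma~\ref{lem:generic-residual-basket} transfers it to $X_t$. You never need $X_t$ itself to degenerate to $X_{P_0}$.
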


\begin{proof}
By Proposition~\ref{prop:qg-residual-genus}, the left-hand side is $1+\frac12\sum_jw_{0,j}(\ell_j-1)$.  By Lemma~\ref{lem:age-count}, each nonempty residue contributes exactly $w_{0,j}(\ell_j-1)/2$ interior residual points.  The basket hypothesis identifies their sum with $\#(R\cap P^\circ)$, and the maximally mutable Laurent polynomial genus formula \cite[Theorem 3.13]{CKPT} proves the remaining equality.
\end{proof}

This completes the mirror comparison in Theorem~\ref{thm:intro-mirror}.  Equation~\eqref{eq:qg-generic-mirror-genus} identifies the categorical residual basket, the residual lattice-point count, and the genus of a general maximally mutable Laurent-polynomial fiber as three descriptions of the same quantity.

\begin{remark}[Choice of toric degeneration]\label{rem:choice-of-toric-degeneration}
Theorem~3 of \cite{ACC+} gives a surjection from polygon mutation classes to $\Q$-Gorenstein deformation classes of locally $\Q$-Gorenstein rigid surfaces admitting toric $\Q$-Gorenstein degenerations.  The deformation input is Lemma~6 of \cite{ACC+}: unobstructedness and formal smoothness of the global-to-local deformation map, together with the local residual smoothing, give the generic residual basket, as in Lemma~\ref{lem:generic-residual-basket}.  This is the mechanism used in the proof of \cite[Theorem 3]{ACC+} to pass to locally rigid members.  Consequently, the sufficiently general deformations associated with any two toric degenerations of $X$ have the residual basket of a sufficiently general deformation of $X$, and Corollary~\ref{cor:qg-generic-mirror-genus} gives the same mirror genus for both.  No mutation equivalence of the two polygons is required: injectivity of the surjection is the separate assertion of Conjecture~A in \cite{ACC+} and is not used here.
\end{remark}

The numerical loss in Corollary~\ref{cor:t-content-drop} raises a geometric question: where does the genus disappear on the mirror curve?  The categorical calculation measures how much is lost, and Corollary~\ref{cor:qg-generic-mirror-genus} identifies what survives with mirror genus.  The mirror-side defect itself is already present in Tveiten's local discussion.

Tveiten observes that when $m$ mutation factors on an edge of height $\ell$ coincide, the resulting point is not an ordinary $m\ell$-fold point: it is an $\ell$-fold point whose branches have order-$m$ tangency, with total genus defect $m\binom{\ell}{2}$ \cite[Remark 3.14, arXiv version 2]{Tveiten}.  The same remark notes that special fibers such as $f=0$ may have smaller genus when the constant coefficient vanishes.  Our comparison uses a general member $f=\eta$, for which the constant coefficient of $f-\eta$ is $-\eta$.

Lutz describes the corresponding boundary basepoints.  His mutable cycle records the points and multiplicities prescribed by mutation factors \cite[Definition 3.1]{Lutz}; Theorem~3.3 and Corollary~3.4 of \cite{Lutz} identify the associated sections after successive boundary blow-ups, subtracting the edge height $h_E=\ell_E$ from each exceptional class.  For the normalized $T$-part, the $m_E$ successive centers lie over $x=-1$; after the first blow-up they are infinitely near, at intersections of the strict boundary transform with the new exceptional divisor.  Lemma~3.2 of \cite{Lutz} gives the weighted local vanishing and multiplicity at least $\ell_E$ at each center.  Equality holds for general fibers under the hypotheses below.  We use this general construction, not the elliptic-fibration conclusion for $T$-polygons whose pencil is resolved in \cite[Lemma 3.6]{Lutz}.  Proposition~\ref{prop:boundary-delta} makes the geometry explicit in the normalized maximally mutable Laurent-polynomial coordinates used here and identifies its $\delta$-invariant with the independently computed categorical $\Q$-Gorenstein genus drop.

\begin{samepage}
For $m>0$ and $\ell>1$, the hypotheses of Proposition~\ref{prop:boundary-delta} give the chain
\[
\boxed{\begin{gathered}
\text{maximal mutability}\Longrightarrow\text{mutable boundary data}\\
\Longrightarrow\text{$m$ successive boundary centers of multiplicity $\ell$}\\
\Longrightarrow\text{boundary singularity}\Longrightarrow\delta=m\binom{\ell}{2}.
\end{gathered}}
\]
\end{samepage}
For $\ell=1$, the point is smooth and contributes zero.  Figure~\ref{fig:boundary-delta} illustrates the geometry under the proposition's hypotheses.  The proof below derives the local invariant from slice divisibility and identifies it with the categorical contribution computed in Corollary~\ref{cor:t-content-drop}.

\begin{figure}[!htbp]
\centering
\begin{tikzpicture}[x=1cm,y=1cm,font=\small]
  \useasboundingbox (-0.15,-2.05) rectangle (14.1,3.5);
  \begin{scope}
    \node[align=center] at (2.05,3.1) {(a) Mutable point and\\residual intersections};
    \draw[thick] (0,1.25)--(4.05,1.25);
    \node[above left] at (0,1.25) {$D_E$};
    \foreach \position in {2.7,3.65} {
      \draw[blue!70!black,thick] (\position-0.15,0.65)--(\position+0.15,1.85);
      \fill[blue!70!black] (\position,1.25) circle (2.5pt);
    }
    \fill[orange!65!black] (1.05,1.25) circle (3pt);
    \node[below,align=center] at (1.05,1.05) {$p_E$\\$x=-1$};
    \node[text=blue!70!black,align=center] at (3.15,2.2) {simple residual\\roots};
    \node at (2.05,-0.55) {$(1+x)^{m\ell}r_E(x)$};
    \node at (2.05,-1.15) {$r_E(-1)\ne0$};
  \end{scope}
  \begin{scope}[xshift=4.9cm]
    \node[align=center] at (2,3.1) {(b) Weighted Newton\\segment};
    \draw[->] (0.35,0.2)--(3.85,0.2) node[right] {$t$};
    \draw[->] (0.35,0.2)--(0.35,2.55) node[above] {$u$};
    \draw[orange!65!black,thick] (0.35,2.1)--(3.25,0.2);
    \fill[orange!65!black] (0.35,2.1) circle (2pt);
    \fill[orange!65!black] (3.25,0.2) circle (2pt);
    \node[above right] at (0.35,2.1) {$(0,\ell)$};
    \node[below] at (3.25,0.15) {$(m\ell,0)$};
    \node at (2,-0.55) {$\operatorname{wt}(t)=1,\quad\operatorname{wt}(u)=m$};
    \node at (2,-1.25) {$F_0=t^{m\ell}Q\!\left(\dfrac{u}{t^m}\right)$};
  \end{scope}
  \begin{scope}[xshift=9.8cm]
    \node[align=center] at (2,3.1) {(c) Smooth branches\\with contact $m$};
    \draw[gray!70,->] (0.15,0.35)--(3.85,0.35) node[right] {$t$};
    \draw[gray!70,->] (2,0.05)--(2,2.15);
    \node[gray!70,anchor=west] at (2.05,1.75) {$u$};
    \foreach \coefficient in {0.3,0.6,0.9}
      \draw[orange!65!black,thick,domain=-1.4:1.4,samples=45]
        plot ({2+\x},{0.35+\coefficient*\x*\x});
    \fill[orange!65!black] (2,0.35) circle (2.5pt);
    \node[text=orange!65!black] at (2,2.4) {$\ell$ branches};
    \node at (2,-0.35) {$u=\alpha_jt^m+\cdots$};
    \node at (2,-0.9) {$I(C_i,C_j)=m$};
    \node[align=center] at (2,-1.65)
      {$\delta_E=\sum_{i<j}I(C_i,C_j)$\\[3pt]$=m\binom{\ell}{2}$};
  \end{scope}
\end{tikzpicture}
\caption{Boundary geometry of the $T$-content, schematically and under the hypotheses of Proposition~\ref{prop:boundary-delta}.  The smoothable edge factor is concentrated at the orange point $x=-1$.  The weighted Newton segment produces $\ell$ smooth branches with pairwise intersection multiplicity $m$, hence $\delta_E=m\binom{\ell}{2}$.  The blue residual roots meet the boundary transversely.  Panel (c) uses three representative tangent branches to illustrate $m>1$, not a fixed value of $\ell$.}
\label{fig:boundary-delta}
\end{figure}

\begin{samepage}
\begin{proposition}[$T$-content as a boundary $\delta$-invariant]\label{prop:boundary-delta}
Let $P$ be a Fano polygon and let $f$ belong to the normalized maximally mutable family of \cite[Propositions 3.7 and 3.9]{CKPT}.  For each edge $E$, write $w_E=m_E\ell_E+w_{0,E}$, with $0\le w_{0,E}<\ell_E$.  Up to a monomial, write its edge polynomial as $(1+x)^{m_E\ell_E}r_E(x)$ in a primitive edge coordinate.  Assume that every residual factor $r_E$ has simple roots, that $r_E(-1)\ne0$ whenever $m_E>0$, and that the general fiber is irreducible.

Let $\overline C_\eta$ be the closure of $f=\eta$ on the toric surface defined by the normal fan of $P$.  For general $\eta$ and each edge with $m_E>0$, the point $x=-1$ on its boundary divisor has $\ell_E$ smooth branches with pairwise intersection multiplicity $m_E$.  Its local invariant, with $\delta_E=0$ for $m_E=0$, is
\[
\boxed{\delta_E=m_E\binom{\ell_E}{2}.}
\]
These are the only possible singularities of $\overline C_\eta$, and
\[
\boxed{
g_{\mathrm{cat}}(\mathcal X_P)-g\bigl(\widetilde{\overline C_\eta}\bigr)
=\sum_E\delta_E=\sum_E m_E\binom{\ell_E}{2}.}
\]
\end{proposition}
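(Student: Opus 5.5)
The strategy is to compare two genus counts for $\overline C_\eta$. Since the fiber is general, the affine part $\{f=\eta\}\subset(\C^*)^2$ is smooth, by Bertini in the torus for the pencil spanned by $1$ and $f$. Hence every singularity of $\overline C_\eta$ lies on the toric boundary. The arithmetic genus of a curve with Newton polygon $P$ on the toric surface of its normal fan is $p_a(\overline C_\eta)=\#(P^\circ\cap N)$, by the adjunction and Khovanskii count already used in Corollary~\ref{cor:toric-laurent-genus}. This equals $g_{\mathrm{cat}}(\mathcal X_P)$ by Proposition~\ref{prop:toric-categorical-genus}. Irreducibility then gives $g(\widetilde{\overline C_\eta})=p_a-\sum_p\delta_p$. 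The whole proof therefore reduces to a local analysis on each boundary divisor $D_E$, followed by summation.

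\emph{Step 1 (boundary intersections).} In a chart $(x,t)$ near $D_E=\{t=0\}$, with $x$ the primitive edge coordinate and $t$ the lattice-height coordinate, the restriction of $f-\eta$ to $D_E$ is the edge polynomial, up to a unit. Here the constant $-\eta$ contributes only at positive height. So $\overline C_\eta\cap D_E$ consists of the roots of $(1+x)^{m_E\ell_E}r_E(x)$. At a simple root of $r_E$, the curve meets $D_E$ transversally, so it is smooth there. The hypothesis $r_E(-1)\neq0$ keeps these roots away from $x=-1$. Toric fixed points are avoided, because $P$ has vertex coefficients $1$, so the edge polynomial has no roots at $0$ or $\infty$. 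Thus the only candidate singular points are $p_E=\{x=-1,t=0\}$ for edges with $m_E>0$.

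\emph{Step 2 (local normal form at $p_E$).} Put $u=1+x$. Maximal mutability along $E$ (the slice divisibility of \cite[Propositions 3.7 and 3.9]{CKPT}) says that the coefficient polynomial of $f$ at height $k$ below $E$ is divisible by $(1+x)^{m_E(\ell_E-k)}$ for $0\le k\le\ell_E$. The constant term $-\eta$ sits at height $\ell_E$, so it imposes no divisibility. In coordinates $(u,t)$, the local equation therefore has Newton polygon with vertices $(0,\ell_E)$ in $t$ and $(m_E\ell_E,0)$ in $u$, above the segment joining them. Assign weights $\operatorname{wt}(t)=m_E$ and $\operatorname{wt}(u)=1$, matching Figure~\ref{fig:boundary-delta}(b) with the roles of the letters swapped. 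The initial form is then $F_0=\sum_{k}c_k\,u^{m_E(\ell_E-k)}t^{k}$. This is a binary form in $t$ and $u^{m_E}$, namely $u^{m_E\ell_E}Q(t/u^{m_E})$.

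\emph{Step 3 (the main obstacle).} The key point is to show that $Q$ has $\ell_E$ distinct roots for general $\eta$. Its two extreme coefficients are $r_E(-1)\neq0$ and the height-$\ell_E$ slice evaluated at $x=-1$, which contains $-\eta$ and is therefore nonzero and varies with $\eta$. I would first try to show that the discriminant of $Q$ is a nonconstant polynomial in $\eta$, for instance by examining a degenerate value of $\eta$ or by using the explicit normalized coefficients. If that fails, I would invoke Lutz's Lemma~3.2 and Theorem~3.3, which give multiplicity exactly $\ell_E$ at each of the $m_E$ successive infinitely near centers for general fibers.

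Once $Q$ is separable, the singularity is Newton nondegenerate. It splits into $\ell_E$ smooth branches $t=\alpha_j u^{m_E}+\cdots$ with distinct $\alpha_j$, so $I(C_i,C_j)=m_E$. Each branch is smooth because it has the form $t=\phi(u)$. For $\ell_E=1$ the point is smooth. The standard formula $\delta=\sum_i\delta(C_i)+\sum_{i<j}I(C_i,C_j)$ then gives $\delta_E=m_E\binom{\ell_E}{2}$. Equivalently, Kouchnirenko's count applies: the interior lattice points strictly below the segment number $m_E\binom{\ell_E}{2}$.

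\emph{Step 4 (summation).} Subtracting the $\delta$-invariants gives $g_{\mathrm{cat}}(\mathcal X_P)-g(\widetilde{\overline C_\eta})=\sum_E m_E\binom{\ell_E}{2}$. This matches the drop of Corollary~\ref{cor:t-content-drop} applied to $X_P$.
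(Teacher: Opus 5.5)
Your overall plan is the paper's: smoothness in the torus for general $\eta$, transverse boundary intersections at the simple roots of $r_E$, avoidance of fixed points, and the weighted initial form $u^{m_E\ell_E}Q(t/u^{m_E})$ at $x=-1$ coming from slice divisibility. It then gives smooth branches $t=\alpha_ju^{m_E}+\cdots$ with pairwise contact $m_E$, and finally $p_a-g=\sum_E\delta_E$ with $p_a=\#(P^\circ\cap N)$.

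The genuine gap is Step~3, on which the branch assertion of the proposition rests. You name the separability of $Q$ as the main obstacle but only list things you would try. The fallback to Lutz does not close it. The paper reads Lutz's Lemma~3.2 as giving multiplicity \emph{at least} $\ell_E$ at the successive centers. Even multiplicity exactly $\ell_E$ at the $m_E$ centers does not control what happens after the last blow-up. For $m_E=1$ and $\ell_E=2$, the germs $(t-\alpha u)^2+u^4$ and $(t-\alpha u)^2+u^3$ both have this Newton segment and multiplicity exactly $2$. Yet the first is a tacnode with $\delta=2\neq m_E\binom{\ell_E}{2}$, and the second is a cusp with a single singular branch. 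Separability of $Q$ is precisely what rules these out.

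The missing idea is that $\eta$ enters only one coefficient of $Q$. In your notation write $Q(s)=G(s)+c_{\ell_E}s^{\ell_E}$, where:
\begin{itemize}
\item $G(s)=\sum_{k<\ell_E}c_ks^k$ comes from slices at positive $y$-height and does not involve $\eta$;
\item $G(0)=c_0$ is $r_E(-1)\neq0$ up to a unit;
\item $c_{\ell_E}$ is a nonconstant affine function of $\eta$.
\end{itemize}
Since $c_0\neq0$, every root is nonzero, and $Q(s)=s^{\ell_E}\bigl(c_{\ell_E}-\phi(s)\bigr)$ with $\phi(s)=-G(s)/s^{\ell_E}$. A repeated root $s_0$ therefore forces $\phi(s_0)=c_{\ell_E}$ and $\phi'(s_0)=0$. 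Because $\phi$ has a pole at $0$, it is nonconstant and has finitely many critical values, so only finitely many $\eta$ are excluded. This is the paper's argument.

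Your discriminant route also works once this is seen. As a polynomial in $c_{\ell_E}$, the discriminant has degree exactly $\ell_E-1$, with top term $\pm\ell_E^{\ell_E}c_0^{\ell_E-1}c_{\ell_E}^{\ell_E-1}$, so it is nonconstant for $\ell_E\ge2$.

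A small slip: for a Newton-nondegenerate germ, $\delta$ counts lattice points with both coordinates positive lying on \emph{or} below the Newton segment. Counting only those strictly below gives $m_E\binom{\ell_E}{2}-(\ell_E-1)$. Your branch-additivity computation of $\delta_E$ is unaffected.
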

\end{samepage}

\begin{proof}
Fix an edge with $m=m_E>0$ and $\ell=\ell_E$.  Choose torus coordinates in which this edge lies at height $\ell$, and write $f=\sum_h f_h(x)y^h$.  Proposition~3.7 of \cite{CKPT} prescribes mutability with factor $(1+x)^m$, and Proposition~3.9 identifies this family with the maximally mutable family.  The substitution $y\mapsto y/(1+x)^m$ is Laurent precisely when $(1+x)^{mh}$ divides $f_h(x)$ for every $h>0$.  This is the slice condition we use, not an assertion that coincident mutable points are ordinary multiple points.

At the boundary point, $t=x+1$ and $u=y^{-1}$ are smooth local coordinates.  Multiplying $f-\eta$ by $u^\ell$ and a monomial unit gives an equation $F(t,u)=0$.  The slice condition and $r_E(-1)\ne0$ imply that its lowest part for weights $\operatorname{wt}(t)=1$, $\operatorname{wt}(u)=m$ is
\[
F_0(t,u)=\sum_{i=0}^{\ell}c_i t^{m(\ell-i)}u^i
=t^{m\ell}Q(u/t^m),\qquad c_0\ne0.
\]
The coefficient $c_\ell$ varies affinely and nontrivially with $\eta$; the other $c_i$ are fixed.  Thus $Q$ has degree $\ell$ and distinct nonzero roots for general $\eta$: a repeated root would give a critical value of the nonconstant rational function $\bigl(\sum_{i<\ell}c_i z^i\bigr)/z^\ell$, of which there are only finitely many.  In particular the Newton segment joins $(m\ell,0)$ to $(0,\ell)$.

Substituting $u=t^mz$ and dividing by $t^{m\ell}$ gives $Q(z)+tH(t,z)=0$ with $H$ analytic.  The implicit function theorem at each root $\alpha_j$ gives a branch
\[
u=\alpha_jt^m+O(t^{m+1}),\qquad j=1,\ldots,\ell.
\]
All branches are smooth, and distinct leading coefficients give intersection multiplicity $m$ for every pair.  Additivity of $\delta$ over the branches therefore yields $\delta_E=m\binom\ell2$.

The torus part is smooth outside the finitely many critical values of $f$.  Simple residual roots give transverse boundary intersections, and nonzero vertex coefficients keep the curve away from toric fixed points.  Hence there are no other singularities.  The toric arithmetic-genus formula gives $p_a(\overline C_\eta)=\#(P^\circ\cap N)=g_{\mathrm{cat}}(\mathcal X_P)$ by Proposition~\ref{prop:toric-categorical-genus}.  The normalization formula $p_a-g=\sum\delta$ proves the stated identity and identifies it with the drop in Corollary~\ref{cor:t-content-drop}.
\end{proof}

\begin{samepage}
For the toric surface $X_P$, take $f$ satisfying Proposition~\ref{prop:boundary-delta} and a general value $\eta$, and retain its notation $\overline C_\eta$ for the compactified fiber.  Proposition~\ref{prop:toric-categorical-genus}, Corollary~\ref{cor:qg-generic-mirror-genus}, and the boundary calculation identify
\[
\begin{aligned}
g_{\mathrm{cat}}(\mathcal X_P)&=p_a(\overline C_\eta),\\
g_{\mathrm{cat}}^{\mathrm{qG}}(X_P)&=g\bigl(\widetilde{\overline C_\eta}\bigr)=g_{\mathrm{MMLP}}.
\end{aligned}
\]
The exact deformation drop is therefore the arithmetic genus lost on normalization:
\[
\boxed{\begin{aligned}
g_{\mathrm{cat}}(\mathcal X_P)-g_{\mathrm{cat}}^{\mathrm{qG}}(X_P)
&=p_a(\overline C_\eta)-g\bigl(\widetilde{\overline C_\eta}\bigr)\\
&=\sum_E\delta_E=\sum_E m_E\binom{\ell_E}{2}.
\end{aligned}}
\]
Thus the same quantity is measured in three ways: as $g_{\mathrm{cat}}(\mathcal X_P)-g_{\mathrm{cat}}^{\mathrm{qG}}(X_P)$, as the $T$-content discarded in passing from all interior lattice points to the mutable genus $1+\#(R\cap P^\circ)$, and as the total boundary $\delta$-invariant $\sum_E\delta_E$.  Under the stated hypotheses, the categorical genus lost under $\Q$-Gorenstein deformation equals the mirror genus defect described by Tveiten and realized through Lutz's successive boundary blow-ups.
\end{samepage}

\section{Examples and explicit mirror checks}\label{sec:explicit-mirrors}

The general comparison is now established.  We begin the explicit checks with the motivating $X_{10}$ mirror and then consider the other Oneto--Petracci families.

We write $X_{k,d}$ for a surface with $k$ points of type $\frac13(1,1)$ and anticanonical degree $d=K_X^2$, following \cite[Section 7]{OP}.  For such a surface, Proposition~\ref{prop:third-rank} gives $\rk\chi^-=2k+2$ and $g_{\mathrm{cat}}=k+1$.  For the families admitting toric $\Q$-Gorenstein degenerations, the mirror polygon has $k$ residual cones of this type, so Theorem~\ref{cor:genus-match} already predicts genus $k+1$.  The purpose here is different: we exhibit that genus directly in representative Laurent-polynomial mirrors from Oneto--Petracci \cite[Section 7]{OP}.  These computations are not inputs to the categorical theorem.  They test its terminology on models constructed independently through period calculations.

We use one representative for each value $k=1,2,3,4,6$ occurring in their explicit period computations.  The value $k=5$ is absent because the quantum period of $X_{5,5/3}$ is not computed there: a suitable toric complete-intersection or quotient model was unavailable \cite[Introduction and Theorem 2.2]{OP}.  This is only a gap in the Oneto--Petracci period calculations, not in the toric degeneration or maximally mutable Laurent polynomial description; Example~\ref{ex:five-point-toric} supplies the genus-$6$ comparison without quantum periods.  All section numbers, numbered entries, and polygon drawings cited here follow the publisher version of Oneto--Petracci \cite[Section 7]{OP}, not the first arXiv version.  In particular, the six-point surface $X_{6,2}$ is entry~26 in Section~7.7.  Some entries specify only a subfamily of the maximally mutable Laurent polynomial parameter space; we distinguish these from a general maximally mutable Laurent polynomial with the full polygon.  In each calculation, genus means the genus of the smooth projective model.  To exclude additional singularities in the torus, we use the following generic-smoothness observation: a nonconstant Laurent polynomial over $\C$ has only finitely many critical values, since it is constant on each irreducible component of its critical locus.  Thus its general fiber is smooth in $(\C^*)^2$, and it remains to check the boundary of the compactification.

For $k=1$, the representative $X_{10}=X_{1,1/3}$ is entry~11 of \cite[Section 7.3, polygon 1]{OP}.  Their notation $a_{[p,q]}$ means the coefficient of the monomial $x^py^q$.  The displayed mirror specialization is
\[
f_{360}(x,y)=x^{-3}y^5(1+x+y^{-1})^{10}-2520,
\qquad a_{[4,3]}=360.
\]
The following proposition proves the genus directly, including a one-parameter Laurent-polynomial extension of this specialization.  We do not claim that the extension exhausts the normalized maximally mutable Laurent polynomial parameter space.

\begin{proposition}[The explicit $X_{10}$ genus-two model]\label{prop:x10-explicit-model}
Put $H=1+x+y^{-1}$ and define
\begin{equation}\label{eq:x10-factorized-family}
f_a=x^{-3}y^5H^{10}-2520+(a-360)(x^{-2}y^3H^6-60).
\end{equation}
Its Newton polygon is the triangle $P$ of Example~\ref{ex:x10-mirror-polygon}, its constant term is zero, and $[x^4y^3]f_a=a$.  Set $s=a-360$ and let $\lambda=\eta+2520+60s$ be the shifted fiber value.  For general $(s,\lambda)$, the fiber $f_a=\eta$ is birational to
\begin{equation}\label{eq:x10-hyperelliptic}
v^2=-\lambda(u+s)\left(4u^3(u+s)^2-\lambda u^2(u+s)+4\lambda^2\right).
\end{equation}
Its smooth projective model has genus two.  The same conclusion holds for $a=360$ and general $\eta$.
\end{proposition}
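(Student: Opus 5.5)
The plan is to establish the three elementary claims about $f_a$ first, and then to reduce the fiber to a hyperelliptic curve through an explicit tower of rational substitutions adapted to the factorized form \eqref{eq:x10-factorized-family}.

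\textbf{Newton polygon and coefficients.} Multinomial expansion of $H^{10}$ and $H^6$ shows that $x^{-3}y^5H^{10}$ has Newton polygon the triangle $P=\operatorname{conv}\{(-3,-5),(-3,5),(7,5)\}$, and that $x^{-2}y^3H^6$ lies inside it.
\begin{itemize}
\item The constant term of the first summand is the coefficient of $x^3y^{-5}$ in $H^{10}$, namely $10!/(2!\,3!\,5!)=2520$.
\item The constant term of the second summand is the coefficient of $x^2y^{-3}$ in $H^6$, namely $6!/(1!\,2!\,3!)=60$.
\item Hence the two subtractions make the constant term vanish.
\item The coefficient of $x^4y^3$ is $10!/(1!\,7!\,2!)=360$ from the first summand.
\item From the second summand it is the coefficient of $x^6y^0$ in $H^6$, which is $1$. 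This gives $[x^4y^3]f_a=360+s=a$.
\end{itemize}

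\textbf{Reduction to a plane curve.} Introduce
\[
B=yH^2,\qquad A=x^{-1}y^2H^4=B^2/x.
\]
Then $x^{-3}y^5H^{10}=A^3/B$ and $x^{-2}y^3H^6=A^2/B$. The equation $f_a=\eta$ therefore becomes $(A^3+sA^2)/B=\lambda$, that is,
\[
B=A^2(A+s)/\lambda,\qquad x=B^2/A.
\]
So $A$ is a rational coordinate on the image curve, and $x$ and $B$ are rational functions of $A$.

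\textbf{The double cover.} Recover $y$ from $yB=(y(1+x)+1)^2$. This is the quadratic
\[
(1+x)^2y^2+\bigl(2(1+x)-B\bigr)y+1=0,
\]
whose discriminant is $B\bigl(B-4(1+x)\bigr)$. The fiber is therefore birational to the double cover
\[
v'^2=B\bigl(B-4-4B^2/A\bigr),
\]
with $B$ and $x$ the functions of $A$ above. It is irreducible provided this discriminant is not a square in $\C(A)$. This map has to be checked to be birational onto the fiber, and not merely dominant; degree considerations (the $y$-quadratic has two roots, matched by the sign of $v'$) should show this.

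Substituting $u=A$ and clearing square factors, namely the $u^4$ contained in $B^2$ and powers of $\lambda$ absorbed into $v$, should produce exactly \eqref{eq:x10-hyperelliptic}. I expect this bookkeeping, tracking which factors are squares, to be tedious but routine.

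\textbf{Main obstacle: the genus.} The key step is to show that the right-hand side of \eqref{eq:x10-hyperelliptic} is squarefree of degree $6$ (or $5$) in $u$ for general $(s,\lambda)$, so that the smooth model is a genus-two hyperelliptic curve. I would do this as follows.
\begin{itemize}
\item Check that $u=-s$ is not a root of the quartic factor, since its value there is $4\lambda^2\neq0$.
\item Check that the quartic factor is separable by specializing to one explicit parameter value and computing the discriminant there, which then suffices generically.
\end{itemize}
The case $a=360$, that is $s=0$, needs a separate check. There the factor $(u+s)$ merges with powers of $u$, so I would recompute the squarefree part directly, showing that it still has six, or five plus infinity, distinct branch points for general $\lambda$, possibly after removing a square factor $u^2$.
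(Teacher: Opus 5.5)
Your proposal is correct and follows the paper's proof essentially step for step: your $A$ and $B$ are the paper's $u$ and $u^2/A$, the $y$-quadratic and its discriminant are identical, and the explicit rational formula for $y$ in terms of $(u,v)$ gives the birationality. Two small corrections: the second factor $4u^3(u+s)^2-\lambda u^2(u+s)+4\lambda^2$ is a quintic, not a quartic, and the square you clear is $u^2$ (coming from the single factor $B$), so for $\lambda\ne0$ the right side has degree exactly $6$; moreover, specializing at $(s,\lambda)=(0,1)$, where the right side is $-u(4u^5-u^3+4)$, proves squarefreeness both for general $(s,\lambda)$ and along the line $s=0$, so the case $a=360$ needs no separate treatment and no factors merge there.
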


\begin{proof}
The constant terms of $x^{-3}y^5H^{10}$ and $x^{-2}y^3H^6$ are, respectively,
\[
\frac{10!}{3!5!2!}=2520,\qquad \frac{6!}{2!3!1!}=60.
\]
Their coefficients of $x^4y^3$ are $10!/(7!2!1!)=360$ and $1$.  This proves the normalization.  The first summand has the three vertices of $P$ with coefficients one, and the support of the second lies strictly inside $P$, so the Newton polygon is unchanged.

On the fiber introduce
\[
u=x^{-1}y^2H^4,\qquad A=x^{-2}y^3H^6.
\]
Then $x^{-3}y^5H^{10}=Au$ and
\[
A(u+s)=\lambda,\qquad x=\frac{u^3}{A^2},\qquad yH^2=\frac{u^2}{A}.
\]
Thus $A=\lambda/(u+s)$ and $x=u^3(u+s)^2/\lambda^2$.  Set $B=1+x$; the identity $H=B+y^{-1}$ gives
\[
B^2y^2+\left(2B-\frac{u^2}{A}\right)y+1=0.
\]
Its discriminant $\Delta=u^4/A^2-4Bu^2/A$ satisfies
\[
\frac{\lambda^4}{u^2}\Delta
=-\lambda(u+s)\left(4u^3(u+s)^2-\lambda u^2(u+s)+4\lambda^2\right).
\]
Taking $v=(\lambda^2/u)(2B^2y+2B-u^2/A)$ yields \eqref{eq:x10-hyperelliptic}.  Conversely,
\[
y=\frac{uv/\lambda^2-2B+u^2/A}{2B^2}
\]
recovers $y$ rationally, and the preceding identities recover $x,u,A$ on dense open sets.  This proves birationality.  The minus sign is the one obtained from the discriminant; replacing $v$ by $iv$ over $\C$ gives the opposite-sign quadratic twist.

For $\lambda\ne0$ the right-hand side has degree six.  At $(s,\lambda)=(0,1)$ it equals $-uQ(u)$, where $Q(u)=4u^5-u^3+4$.  Since $Q(0)=4$ and $Q'(u)=u^2(20u^2-3)$, a common root $\alpha$ of $Q$ and $Q'$ would satisfy $\alpha^2=3/20$.  Substituting this into $Q(\alpha)=0$ gives $4-3\alpha/50=0$, hence $\alpha=200/3$, a contradiction.  Thus $Q$ is squarefree and coprime to $u$, so the sextic is squarefree.  Squarefreeness is open, so it holds for general $(s,\lambda)$ and also for general $\lambda$ on $s=0$.  The resulting double cover of $\Pj^1$ has six branch points; Riemann--Hurwitz gives genus two.
\end{proof}

For $k=2$, use the one-parameter mirror of $X_{2,8/3}$ in \cite[Section 5.3; Section 7.4, entry 15, polygon 13]{OP}:
\[
f_a(x,y)=\frac{(1+x+y)^3}{xy}+xy+a(x+y)-6.
\]
Writing $c=\eta+6$, clearing denominators gives $(1+x+y)^3+x^2y^2+a xy(x+y)-cxy=0$.  Its closure in $\Pj^2$ is the quartic
\begin{equation}\label{eq:op-quartic}
F_{a,c}=z(z+x+y)^3+x^2y^2+a xyz(x+y)-c xyz^2=0.
\end{equation}
To verify smoothness, take $a=0$, $c=1$.  On $z=0$ the curve meets only $[1:0:0]$ and $[0:1:0]$, and $F_z$ is nonzero at both.  On $z=1$, put $G=(1+x+y)^3+x^2y^2-xy$.  At a singular point,
\[
G_x-G_y=(y-x)(2xy-1)=0.
\]
If $2xy=1$, the equations $G_x=G_y=0$ force $1+x+y=0$, but then $G=-1/4$.  Otherwise $x=y=t$, and $G=G_x=0$ would give a common root of
\[
p(t)=t^4+8t^3+11t^2+6t+1,
\qquad q(t)=2t^3+12t^2+11t+3.
\]
Here $q=p'/2$, so $\operatorname{Res}_t(p,q)=\operatorname{disc}(p)/16=-89\ne0$.  This checks all projective Jacobian equations.  Smoothness is open in the projective family \eqref{eq:op-quartic}, so its general member is a smooth, necessarily irreducible plane quartic.  It follows that
\[
g=\frac{(4-1)(4-2)}2=3=g_{\mathrm{cat}}.
\]

For $k=3$, \cite[Section 7.5, entry 20, polygon 14]{OP} gives the following specialization for $X_{3,3}$, with $a_{[0,1]}=a_{[-1,0]}=0$ and $a_{[-1,1]}=1$:
\[
f(x,y)=\frac{x}{y}\left(1+\frac1x+y\right)^2
+\frac yx\left(1+\frac1x+y\right)-2.
\]
Set $L=x+1+xy$ and $c=\eta+2$.  Clearing denominators gives $xL^2+y^2L-cx^2y=0$, whose quintic homogenization is
\[
H=x(xz+z^2+xy)^2+zy^2(xz+z^2+xy)-cx^2yz^2=0.
\]
The affine boundary points are $(0,0)$ and $(-1,0)$; their respective nonzero derivatives are $H_x=1$ and $H_y=-c$ in the chart $z=1$.  At infinity the only points are $A=[0:1:0]$ and $B=[1:0:0]$.  In the chart $y=1$ at $A$, the quadratic part is $xz$, so $A$ is an ordinary node.  In the chart $x=1$ at $B$, the quadratic part is $(y+z)^2$, and restriction to its double tangent gives
\[
H(1,-z,z)=cz^3+z^4+z^5.
\]
For $c\ne0$ this is an $A_2$ cusp: after completing the square in the transverse coordinate $y+z$, the first nonzero term along the tangent has degree three.  Both singularities have $\delta=1$.  The generic-smoothness observation excludes further singularities for general $c$.  No coordinate line is a component, since
\[
H|_{z=0}=x^3y^2,\qquad H|_{x=0}=y^2z^3,\qquad
H|_{y=0}=xz^2(x+z)^2
\]
are all nonzero polynomials.  The quintic is also irreducible: a reduced reducible quintic without a boundary component would have, by B\'ezout, intersection multiplicity at least $1\cdot4=4$ between its components, exceeding the total $\delta$-invariant two.  Therefore
\[
g=\frac{(5-1)(5-2)}2-1-1=4=g_{\mathrm{cat}}.
\]
This specialization retains the full polygon~14.  Thus, unlike the six-point expression discussed below, it already realizes the genus predicted for the general maximally mutable Laurent polynomial.

For $k=4$, choose $X_{4,4/3}$, entry~24 of \cite[Section 7.6]{OP}.  Its polygon~5 is the octagon with vertices $(\pm2,\pm1)$ and $(\pm1,\pm2)$, with all sign choices.  The four horizontal and vertical edges have binomial coefficients $1,2,1$.  Together with the interior coefficients specified in that entry, they give
\begin{equation}\label{eq:op-four-polynomial}
\begin{aligned}
f_a(x,y)={}&(x^2+x^{-2})(y+2+y^{-1})
+(y^2+y^{-2})(x+2+x^{-1})\\
&+(a+8)(x+x^{-1})(y+y^{-1})
+(2a+14)(x+x^{-1}+y+y^{-1}).
\end{aligned}
\end{equation}
Multiplying $f_a-\eta$ by $x^2y^2$ and bihomogenizing gives a curve $C$ of bidegree $(4,4)$ in $\Pj^1\times\Pj^1$.  Since $K_{\Pj^1\times\Pj^1}=\mathcal O(-2,-2)$ and $(a,b)\cdot(c,d)=ad+bc$, adjunction gives
\[
2p_a(C)-2=(4,4)\cdot(2,2)=16,
\qquad p_a(C)=9.
\]
On $x=0$ its equation restricts to $y(y+1)^2$; inversion of either coordinate and interchange of $x,y$ give the other boundary charts.  The four corners are smooth, with local linear part $x+y$ at $(0,0)$.  The only remaining boundary points are
\[
(0,-1),\quad(\infty,-1),\quad(-1,0),\quad(-1,\infty).
\]
At the first, in coordinates $u=x$, $v=y+1$, the quadratic part is
\[
-v^2-(\eta+4a+24)u^2.
\]
It has distinct tangent lines when $\eta+4a+24\ne0$; the symmetries give the same conclusion at all four points.  They are therefore ordinary nodes, and a general fiber has no additional singularities in the torus.

To prove irreducibility, we use the following change of variables.  Put $U=x+x^{-1}+2$, $V=y+y^{-1}+2$ and $b=\eta+4a+24$.  The fiber equation becomes
\[
UV(U+V+a)=b.
\]
Its plane cubic closure is smooth for general $(a,b)$.  The two quadratic equations recovering $x$ and $y$ have discriminants $U(U-4)$ and $V(V-4)$.  On the cubic, the first has simple zeros at the two affine points with $U=4$; the third intersection of the projective line $U=4$ with the cubic is the point at infinity $[0:1:0]$.  At each of these two affine points the second discriminant is a unit for general parameters, and the analogous statement holds with $U,V$ interchanged.  A square in a function field has even valuation at every point.  These simple zeros show that neither discriminant, nor their product, is a square, so their square classes are independent.  Adjoining both square roots gives a degree-four field extension, proving irreducibility of the $(4,4)$ curve.  Consequently its normalization has
\[
g=9-4=5=g_{\mathrm{cat}}.
\]

For $k=6$, use the family with the specified Newton polygon for $X_{6,2}$ associated with polygon~9 in \cite[Section 7.7, entry 26]{OP}:
\begin{equation}\label{eq:op-six-polygon}
P_6=\operatorname{conv}\{(1,-2),(2,-1),(1,1),(-1,2),(-2,1),(-1,-1)\}.
\end{equation}

\begin{remark}[The printed polynomial]\label{rem:printed-polynomial}
The expression printed in this entry omits $xy$, although $(1,1)$ is a vertex of the specified Newton polygon.  The opposite vertex monomial $x^{-1}y^{-1}$ is present with coefficient one.  Restoring $xy$ with coefficient one therefore adds $2$ to the contribution of the antipodal vertex pairs to the constant term of $f^2$, changing that vertex contribution from $4$ to $6$.  This is consistent with the printed coefficient of $t^2$ in the period and supports a typographical omission; by itself it does not verify the full period.  We use the family with the specified Newton polygon below.
\end{remark}

\begin{definition}[Nondegenerate Laurent equations]
A Laurent equation is \emph{nondegenerate with respect to its polygon} if every face polynomial, including the whole polynomial, has no common torus zero with its logarithmic partial derivatives.
\end{definition}

For a nondegenerate curve, the adjunction formula $p_a(C)=1+(C^2+C\cdot K)/2$ on a smooth toric compactification gives the number of interior Newton lattice points \cite[Section 1, Theorem 1]{KhovanskiiGenus}.

With vertex coefficients normalized to one, write the family as
\[
f_{\boldsymbol a}=\frac{x}{y^2}+\frac{x^2}{y}+xy+\frac{y^2}{x}
+\frac y{x^2}+\frac1{xy}
+a_1x+a_2y+a_3\frac yx+a_4\frac1x+a_5\frac1y+a_6\frac xy.
\]
Every edge of $P_6$ has width one and height three; hence all six cones are residual $\frac13(1,1)$ cones, and there are no $T$-mutability constraints.  The six edge determinants are all three, giving area nine, while all six edges are primitive, so the total lattice boundary length, equivalently the boundary lattice-point count, is six.  Pick's theorem yields
\[
\#(P_6^\circ\cap\Z^2)=9-\frac62+1=7.
\]
Each edge polynomial is a primitive binomial with a simple root.  The closure on the toric surface of the normal fan therefore meets the boundary transversely and avoids its fixed points; for general parameters and fiber value it is also smooth in the torus.  Toric adjunction gives arithmetic genus $1+(18-6)/2=7$.  This smooth ample curve is connected, hence irreducible, so its geometric genus is seven.

\begin{proposition}[Explicit mirror check]\label{prop:explicit-mirror-check}
For the representative Oneto--Petracci subfamilies computed above for $X_{1,1/3}$, $X_{2,8/3}$, $X_{3,3}$, $X_{4,4/3}$, and the restored family with the specified Newton polygon for $X_{6,2}$, the genus of a general fiber agrees with categorical genus as follows:
\[
\begin{array}{c|c|c}
k&g_{\mathrm{cat}}&g(\text{general fiber in the subfamily})\\\hline
1&2&2\\2&3&3\\3&4&4\\4&5&5\\6&7&7
\end{array}
\]
\end{proposition}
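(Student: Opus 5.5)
The proposition is essentially an assembly of the case-by-case computations carried out just before it, so I would prove it row by row. The left column is uniform. Each $X_{k,d}$ has exactly $k$ singular points of type $\frac13(1,1)$, so Proposition~\ref{prop:third-rank} (equivalently Theorem~\ref{thm:cyclic-rank}, where each point contributes $(3-1)/2=1$) gives $g_{\mathrm{cat}}=k+1$. This yields the values $2,3,4,5,7$.

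For the right column I would cite the computation attached to each representative.

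\begin{itemize}
\item[$k=1$:] Proposition~\ref{prop:x10-explicit-model} gives a birational hyperelliptic model. It has a squarefree sextic for general $(s,\lambda)$, and also on $s=0$, so the genus is two by Riemann--Hurwitz.
\item[$k=2$:] The closure is the plane quartic \eqref{eq:op-quartic}. Smoothness at $(a,c)=(0,1)$ was checked through the resultant $-89$, and smoothness is an open condition, so the general member is a smooth plane quartic of genus $3$.
\item[$k=3$:] The quintic model $H$ has exactly one node and one $A_2$ cusp, each with $\delta=1$. The generic-smoothness observation excludes further singularities in the torus. Irreducibility follows from B\'ezout, which bounds the intersection multiplicity between putative components below by $4$, exceeding the total $\delta=2$. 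Hence $g=6-2=4$.
\item[$k=4$:] The $(4,4)$ curve on $\Pj^1\times\Pj^1$ has $p_a=9$ and exactly four boundary nodes. Irreducibility comes from the degree-four extension produced by the two independent square classes $U(U-4)$ and $V(V-4)$ over the smooth cubic $UV(U+V+a)=b$. Hence $g=5$.
\item[$k=6$:] Nondegeneracy for the restored family with polygon $P_6$ gives a smooth ample, hence connected and irreducible, curve. Its arithmetic genus is $\#(P_6^\circ\cap\Z^2)=7$ by Pick and toric adjunction.
\end{itemize}

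One point needs care for the statement to be read correctly: "general fiber in the subfamily" must mean general in the stated parameter family. For $k=3$ the argument uses only $c\neq0$ together with generic smoothness, so a general $\eta$ suffices. For $k=1,2,4$ the relevant open conditions are squarefreeness, smoothness, and $b\neq0$ with the cubic smooth. Each is nonempty and open in the parameters, so their intersection with the generic-smoothness locus of $\eta$ is still dense. I would state this explicitly so that the five general-fiber claims hold at one common general point of each family.

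I expect the main obstacle to be the singular cases $k=3$ and $k=4$, where irreducibility must be established separately before the $\delta$-subtraction $g=p_a-\sum\delta$ computes the geometric genus of the normalization. I would rely on the B\'ezout argument and the Kummer-type square-class argument given above. If either argument were in doubt, the fallback is Corollary~\ref{cor:qg-generic-mirror-genus} or Theorem~\ref{cor:genus-match}. Those results predict genus $k+1$ for a \emph{general} maximally mutable polynomial, and upper semicontinuity of the geometric genus in flat families of irreducible curves then bounds a subfamily's value. The explicit calculations remain the primary proof, however, since they must apply to the specialized subfamilies.

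Finally, I would collect the five equalities into the displayed table, noting that $k=5$ is absent for the reason stated before Remark~\ref{rem:printed-polynomial}.
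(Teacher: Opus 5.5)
Your proposal is correct and follows the paper's proof. The left column comes from Proposition~\ref{prop:third-rank}, and the right column simply assembles the preceding explicit compactification computations; the paper cites Theorem~\ref{cor:genus-match} only to note that these values agree with the genus of a general maximally mutable Laurent polynomial. Your semicontinuity fallback could give at most the inequality $g\le k+1$ for the subfamilies, since geometric genus can only drop under specialization (it is lower, not upper, semicontinuous), so you are right to keep the explicit calculations as the actual proof.
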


\begin{proof}
The preceding compactifications give the displayed genera directly on the indicated representatives.  Theorem~\ref{cor:genus-match} gives the same genera for general maximally mutable Laurent polynomials in their mutation classes, while Proposition~\ref{prop:third-rank} gives the categorical values.
\end{proof}

These checks are independent confirmations on mirror models constructed by period calculations rather than from the Euler pairing.  They realize half the ranks of the skew Euler pairings, namely $2,3,4,5,7$, as ordinary geometric genera and make the numerical comparison concrete.

\begin{example}[Cavey--Prince polygons]\label{ex:cavey-prince}
To test other residual singularities, consider the polygons
\[
P=\operatorname{conv}\{(0,1),(-k_1,-1),(k_2,-1)\}
\]
of Cavey--Prince \cite[Proposition 7.1 and Remark 7.2]{CP}, with $(k_1,k_2)=(3,6)$ or $(5,5)$.  Their toric surfaces admit $\Q$-Gorenstein deformations to $X_{k_1+k_2}\subset\Pj(1,1,k_1,k_2)$, with residual basket $\frac1{k_1}(1,1),\frac1{k_2}(1,1)$.  For explicit curve checks we use the binomial specialization
\[
f_{k_1,k_2}(x,y)=y+\frac{(1+x)^{k_1+k_2}}{x^{k_1}y}.
\]
This polynomial has Newton polygon $P$; we obtain it by choosing the binomial coefficients along the horizontal edge in the polygon construction of \cite{CP}.

For $X_9\subset\Pj(1,1,3,6)$, the $\frac13(1,1)$ point contributes $1$, while the $\frac16(1,1)$ point has $w=2$, $\ell=3$, and contributes $2$.  Hence $g_{\mathrm{cat}}=1+1+2=4$.  On $f_{3,6}=\eta$, set $v=x^2(2y-\eta)$.  Clearing denominators gives
\[
v^2=P_{10}(x):=\eta^2x^4-4x(1+x)^9.
\]
For $X_{10}\subset\Pj(1,1,5,5)$, each of the two $\frac15(1,1)$ points contributes $2$, so $g_{\mathrm{cat}}=1+2+2=5$.  On $f_{5,5}=\eta$, the substitution $v=x^3(2y-\eta)$ gives
\[
v^2=P_{11}(x):=\eta^2x^6-4x(1+x)^{10}.
\]
At $\eta=1$, direct polynomial division gives $\gcd(P_{10},P_{10}')=\gcd(P_{11},P_{11}')=1$; squarefreeness therefore holds for general $\eta$.  The smooth projective models have genera $4$ and $5$, respectively.  The genus-$4$ curve provides an explicit mirror-curve check with residual width $w_0=2$, testing Lemma~\ref{lem:age-count} beyond width $1$; the second independently checks the $\frac15(1,1)$ contribution.  Cavey--Prince's cascade polygons and Laurent polynomials provide analogous examples with a single singular point for arbitrary $k$ \cite[Table 8.1]{CP}.
\end{example}

\begin{example}[Separate residual and smoothable cones]\label{ex:petracci-drop}
Petracci's toric surface makes the distinction following Theorem~\ref{cor:genus-match} quantitative: rigid and smoothable contributions coexist, and the latter account exactly for the difference between the categorical genus of the given canonical stack and that of the canonical stack of a general $\Q$-Gorenstein deformation.

Let $X_P$ be the toric del Pezzo surface with polygon
\[
P=\operatorname{conv}\{(2,1),(1,2),(-1,2),(-2,-1),(-1,-2),(1,-2)\}.
\]
Petracci \cite[Proposition 2.1]{PetracciKmoduli} proves that it is K-polystable and has basket
\[
2\times\tfrac13(1,1),\qquad
2\times\tfrac14(1,1),\qquad
2\times\tfrac15(1,2).
\]
The first and third types are $\Q$-Gorenstein rigid.  The two $\frac14(1,1)$ points admit simultaneous global $\Q$-Gorenstein smoothings: the global $\Q$-Gorenstein deformation functor has hull $\C[\![t_1,t_2]\!]$, with independent smoothing parameters, and Petracci realizes it by an explicit family \cite[Section 2.2, equation (5), Proposition 2.3(B), and Section 2.5]{PetracciKmoduli}.  Write $X_t$ for a sufficiently general fiber of this family and $\mathcal X_t$ for its canonical stack.  Its basket is
\[
2\times\tfrac13(1,1)+2\times\tfrac15(1,2).
\]
Theorem~\ref{thm:cyclic-rank} assigns local genus corrections $1$, $1$, and $2$ to $\frac13(1,1)$, $\frac14(1,1)$, and $\frac15(1,2)$, respectively: the relevant gcds are $1$, $2$, and $1$.  Consequently
\[
g_{\mathrm{cat}}(\mathcal X_P)=1+2\cdot1+2\cdot1+2\cdot2=9,
\qquad
g_{\mathrm{cat}}(\mathcal X_t)=1+2\cdot1+2\cdot2=7.
\]
Figure~\ref{fig:petracci-hexagon} displays the two smoothable contributions and the six residual points.
\begin{figure}[htbp]
\centering
\begin{tikzpicture}[x=1.08cm,y=1.08cm,font=\small]
  \fill[blue!9] (2,1)--(1,2)--(-1,2)--(-2,-1)--(-1,-2)--(1,-2)--cycle;
  \fill[orange!22] (0,0)--(1,2)--(-1,2)--cycle;
  \fill[orange!22] (0,0)--(-1,-2)--(1,-2)--cycle;
  \foreach \horizontal/\vertical in {2/1,1/2,-1/2,-2/-1,-1/-2,1/-2}
    \draw[gray!70,dashed] (0,0)--(\horizontal,\vertical);
  \draw[thick] (2,1)--(1,2)--(-1,2)--(-2,-1)--(-1,-2)--(1,-2)--cycle;
  \foreach \horizontal in {-1,1}
    \foreach \vertical in {-1,0,1}
      \fill[blue!70!black] (\horizontal,\vertical) circle (2pt);
  \foreach \vertical in {-1,1}
    \fill[orange!65!black] (0,\vertical) circle (2.6pt);
  \fill (0,0) circle (2.2pt) node[below right] {$0$};
  \node[above] at (0,2.6) {$T$: $(w,\ell)=(2,2)$};
  \node[below] at (0,-2.6) {$T$: $(w,\ell)=(2,2)$};
  \node[above right] at (2,1) {$(2,1)$};
  \node[above right] at (1,2) {$(1,2)$};
  \node[above left] at (-1,2) {$(-1,2)$};
  \node[below left] at (-2,-1) {$(-2,-1)$};
  \node[below left] at (-1,-2) {$(-1,-2)$};
  \node[below right] at (1,-2) {$(1,-2)$};
  \node[anchor=west,align=left] at (3.1,0)
    {$2$ orange points: $T$-content\\[3pt]
     $6$ blue points $+$ origin\\[3pt]
     $g_{\mathrm{cat}}=9\longrightarrow g_{\mathrm{cat}}^{\mathrm{qG}}=7$};
\end{tikzpicture}
\caption{Petracci's hexagon, with its two $T$-cones shaded orange.  Each contains one interior lattice point; the four residual cones contain six points in total.  Keeping these six points and the origin gives the residual genus $7$, compared with the categorical genus of the original toric stack $9$.}
\label{fig:petracci-hexagon}
\end{figure}

Petracci \cite[Section 3.2]{PetracciKmoduli} gives the explicit maximally mutable Laurent polynomial family
\[
\begin{aligned}
f={}&x^2y+x^{-2}y^{-1}+(x+2+x^{-1})(y^2+y^{-2})\\
&+a_1xy+a_2x^{-1}y^{-1}+b_1x+b_2x^{-1}
+c_1xy^{-1}+c_2x^{-1}y.
\end{aligned}
\]
The two width-two, height-two cones are pure $T$-cones.  The remaining four residual cones contribute $1,1,2,2$ interior residual points.  Thus the maximally mutable Laurent polynomial genus formula \cite[Theorem 3.13]{CKPT} gives
\[
\boxed{g_{\mathrm{cat}}(\mathcal X_P)=9,\qquad
g_{\mathrm{cat}}(\mathcal X_t)=7,\qquad
g_{\mathrm{MMLP}}=7.}
\]
The drop $9-7=2$ is exactly the contribution of the two smoothed $\frac14(1,1)$ points.  This is the passage to the generic residual basket described in Lemma~\ref{lem:generic-residual-basket}.
\end{example}

\begin{example}[A mixed cone with residual width $2$]\label{ex:mixed-cone-delta}
Consider the polygon
\[
\boxed{P=\operatorname{conv}\{(-1,3),(4,3),(0,-1)\}.}
\]
Figure~\ref{fig:mixed-cone-example} shows the polygon, its two residual subdivisions, and the mutation described below.

\begin{figure}[tp]
\centering
\begin{minipage}[t]{0.49\textwidth}
\centering
\begin{tikzpicture}[x=0.72cm,y=0.72cm,font=\small]
  \useasboundingbox (-2.1,-2) rectangle (7.6,5.7);
  \node at (2.4,5.35) {(a) Original polygon $P$};
  \draw[step=1,gray!20,very thin] (-1.3,-1.3) grid (4.3,3.3);
  \draw[gray!70,dashed] (0,0)--(-1,3) (0,0)--(4,3);
  \draw[thick] (-1,3)--(4,3)--(0,-1)--cycle;
  \draw[line width=1.1pt] (-1,3)--(4,3);
  \foreach \horizontal/\vertical in {0/0,0/1,0/2,1/1,1/2,2/2}
    \fill (\horizontal,\vertical) circle (2pt);
  \foreach \horizontal in {-1,1,2,4}
    \fill (\horizontal,3) circle (2pt);
  \node[above left] at (-1,3) {$(-1,3)$};
  \node[above left] at (1,3) {$(1,3)$};
  \node[above right] at (2,3) {$(2,3)$};
  \node[right] at (4,3) {$(4,3)$};
  \node[below] at (0,-1) {$(0,-1)$};
  \node[below left] at (0,0) {$0$};
  \node at (2,4.35) {$\frac1{15}(1,4),\quad(w,\ell)=(5,3)$};
  \node[align=left,anchor=west] at (3.4,0.35) {$A_3$\\$(w,\ell)=(4,1)$};
  \node[rotate=76] at (-1.05,0.85) {smooth};
  \node at (2.4,-1.8) {$5=1\cdot3+2$; six interior points};
\end{tikzpicture}
\end{minipage}\hfill
\begin{minipage}[t]{0.49\textwidth}
\centering
\begin{tikzpicture}[x=0.72cm,y=0.72cm,font=\small]
  \useasboundingbox (-2.1,-2) rectangle (7.6,5.7);
  \node at (2.4,5.35) {(b) First crepant subdivision};
  \fill[orange!22] (0,0)--(-1,3)--(2,3)--cycle;
  \fill[blue!9] (0,0)--(2,3)--(4,3)--cycle;
  \draw[step=1,gray!20,very thin] (-1.3,-1.3) grid (4.3,3.3);
  \draw[thick] (-1,3)--(4,3)--(0,-1)--cycle;
  \draw[gray!70,dashed] (-1,3)--(0,0)--(4,3);
  \draw[thick] (0,0)--(2,3);
  \foreach \horizontal/\vertical in {0/1,0/2,1/2}
    \fill[orange!65!black] (\horizontal,\vertical) circle (2pt);
  \foreach \horizontal/\vertical in {1/1,2/2}
    \fill[blue!70!black] (\horizontal,\vertical) circle (2.5pt);
  \fill (0,0) circle (2.2pt) node[below left] {$0$};
  \fill (2,3) circle (2pt) node[above] {$(2,3)$};
  \node[text=orange!65!black,align=center] at (0.1,4.15) {$T$-part\\width $3$};
  \node[text=blue!70!black,align=center] at (3.8,4.15) {residue\\width $2$};
  \node[text=blue!70!black,anchor=west,align=left] at (4.45,1.6) {$(1,1)$\\[3pt]$(2,2)$};
  \node at (2.4,-1.8) {$\#(R_1\cap P^\circ)=2$};
\end{tikzpicture}
\end{minipage}

\medskip
\begin{minipage}[t]{0.49\textwidth}
\centering
\begin{tikzpicture}[x=0.72cm,y=0.72cm,font=\small]
  \useasboundingbox (-2.1,-2) rectangle (7.6,5.7);
  \node at (2.4,5.35) {(c) Second crepant subdivision};
  \fill[blue!9] (0,0)--(-1,3)--(1,3)--cycle;
  \fill[orange!22] (0,0)--(1,3)--(4,3)--cycle;
  \draw[step=1,gray!20,very thin] (-1.3,-1.3) grid (4.3,3.3);
  \draw[thick] (-1,3)--(4,3)--(0,-1)--cycle;
  \draw[gray!70,dashed] (-1,3)--(0,0)--(4,3);
  \draw[thick] (0,0)--(1,3);
  \foreach \horizontal/\vertical in {1/1,1/2,2/2}
    \fill[orange!65!black] (\horizontal,\vertical) circle (2pt);
  \foreach \vertical in {1,2}
    \fill[blue!70!black] (0,\vertical) circle (2.5pt);
  \fill (0,0) circle (2.2pt) node[below left] {$0$};
  \fill (1,3) circle (2pt) node[above] {$(1,3)$};
  \node[text=blue!70!black,align=center] at (-0.35,4.15) {residue\\width $2$};
  \node[text=orange!65!black,align=center] at (3.2,4.15) {$T$-part\\width $3$};
  \node[text=blue!70!black,anchor=west,align=left] at (4.45,1.6) {$(0,1)$\\[3pt]$(0,2)$};
  \node at (2.4,-1.8) {$\#(R_2\cap P^\circ)=2,\quad R_1\ne R_2$};
\end{tikzpicture}
\end{minipage}\hfill
\begin{minipage}[t]{0.49\textwidth}
\centering
\begin{tikzpicture}[x=1cm,y=1cm,font=\small]
  \useasboundingbox (-1.3,-1.44) rectangle (5.68,4.1);
  \node at (2.15,3.85) {(d) Mutation and lattice equivalence};
  \node at (0,3.1) {$P\rightsquigarrow P'$};
  \begin{scope}[x=0.48cm,y=0.48cm,yshift=0.4cm]
    \draw[step=1,gray!20,very thin] (-1.2,-1.2) grid (1.2,3.2);
    \draw[thick] (-1,3)--(1,3)--(1,-1)--(0,-1)--cycle;
    \foreach \horizontal/\vertical in {-1/3,1/3,1/-1,0/-1}
      \fill (\horizontal,\vertical) circle (1.8pt);
    \node[above left] at (-1,3) {$(-1,3)$};
    \node[above right] at (1,3) {$(1,3)$};
    \node[below right] at (1,-1) {$(1,-1)$};
    \node[below left] at (0,-1) {$(0,-1)$};
  \end{scope}
  \draw[->,thick] (1.05,1.65)--(2.35,1.65) node[midway,above] {$M$};
  \begin{scope}[x=0.48cm,y=0.48cm,xshift=3.7cm,yshift=0.4cm]
    \draw[step=1,gray!20,very thin] (-1.2,-1.2) grid (1.2,6.2);
    \draw[thick] (1,0)--(0,-1)--(-1,2)--(-1,6)--cycle;
    \foreach \horizontal/\vertical in {1/0,0/-1,-1/2,-1/6}
      \fill (\horizontal,\vertical) circle (1.8pt);
    \node[right] at (1,0) {$(1,0)$};
    \node[below] at (0,-1) {$(0,-1)$};
    \node[left] at (-1,2) {$(-1,2)$};
    \node[left] at (-1,6) {$(-1,6)$};
    \node[right] at (0.25,4.8) {$P_6^{(4)}$};
  \end{scope}
  \node at (2.15,-1) {$M=\begin{pmatrix}-1&0\\3&1\end{pmatrix}$};
\end{tikzpicture}
\end{minipage}

\smallskip
{\small
\[
\boxed{(g_{\mathrm{cat}},g_{\mathrm{cat}}^{\mathrm{qG}},g_{\mathrm{MMLP}})=(6,3,3)},
\qquad 6\xrightarrow[\delta=3]{T\text{-content}}3=3.
\]
}
\caption{The mixed-cone example of Example~\ref{ex:mixed-cone-delta}.  The top edge has width $5=3+2$ and height $3$, with an orange width-$3$ $T$-part and blue residual width $2$.  Two crepant subdivisions give different residual sets, each with two interior residual points.  After mutation the polygon is $\mathrm{GL}(2,\Z)$-equivalent to $P_6^{(4)}$.  The corresponding genera are $(6,3,3)$.}
\label{fig:mixed-cone-example}
\end{figure}

In this order its edges have absolute determinants $15,4,1$, widths $5,4,1$, and heights $3,1,1$.  The corresponding cones have types $\frac1{15}(1,4)$, $\frac14(1,3)=A_3$, and smooth, respectively.  The top edge has $w=\gcd(15,5)=5$ and $5=1\cdot3+2$; since $(4+1)/5=1$, its residue is $\frac16(1,1)$.  The $A_3$ cone has $m=4$, $w_0=0$, $\ell=1$, so contributes neither genus nor genus drop.

The interior lattice points are
\[
P^\circ\cap N=\{(0,0),(0,1),(0,2),(1,1),(1,2),(2,2)\}.
\]
The categorical and residual formulas give
\[
g_{\mathrm{cat}}(\mathcal X_P)=1+\frac{15-5}{2}=6,
\qquad
g_{\mathrm{cat}}^{\mathrm{qG}}(X_P)=1+\frac{2(3-1)}2=3,
\]
with drop $\binom32=3$.  The primitive generators on the top edge are $(-1,3),(1,3),(2,3),(4,3)$.  Cutting off its width-$3$ $T$-part at $(2,3)$ or at $(1,3)$ gives, respectively, residual triangles
\[
\begin{aligned}
\operatorname{conv}\{0,(2,3),(4,3)\},&\quad R\cap P^\circ=\{(1,1),(2,2)\},\\
\operatorname{conv}\{0,(-1,3),(1,3)\},&\quad R\cap P^\circ=\{(0,1),(0,2)\}.
\end{aligned}
\]
Thus Definition~\ref{def:residual-lattice-points} gives genuinely different residual sets, but their interior cardinality is $2$ in either subdivision.

Mutation with weight $(0,-1)$ and factor $[0,(1,0)]$ gives
\[
P'=\operatorname{conv}\{(-1,3),(1,3),(1,-1),(0,-1)\}.
\]
The unimodular matrix $\left(\begin{smallmatrix}-1&0\\3&1\end{smallmatrix}\right)$ maps its vertex set to $\{(1,0),(0,-1),(-1,2),(-1,6)\}$.  This is precisely Cavey--Prince's $P_6^{(4)}$ \cite[Section 4.1]{CP}.  Their Proposition~4.2 constructs a general deformation in the family $X_6^{(4)}$, the blow-up of $\Pj(1,1,6)$ at four general smooth points \cite[Definition 2.7]{CP}; mutation preserves the generic $\Q$-Gorenstein deformation class \cite[Theorem 3]{ACC+}.  The dual polygon has vertices $(0,-1/3),(-1,1),(4,1)$, so
\[
K_{X_P}^2=2\operatorname{Area}(P^\vee)=\frac{20}{3}
=6-4+4+\frac46,
\]
agreeing with the cascade degree $(k+2)^2/k-l$ \cite[Remark 2.8]{CP}.  The sole remaining singularity is $\frac16(1,1)$; applying Theorem~\ref{thm:cyclic-rank} to the canonical stack of $X_6^{(4)}$ gives $g_{\mathrm{cat}}=1+2=3$.

An explicit normalized maximally mutable Laurent polynomial for $P$ is
\[
\boxed{\begin{aligned}
f={}&x^{-1}y^3(1+x)^3(1+cx+x^2)\\
&+(1+x)^2(a+4x)y^2+(1+x)(b+6x)y+4x+y^{-1}.
\end{aligned}}
\]
Here the top factor $(1+x)^3$ represents the $T$-part, and $1+cx+x^2$ is the residual quadratic.  The $y^2$- and $y$-slices have the required factors $(1+x)^2$ and $1+x$.  Along the right edge, the coefficients $1,4,6,4,1$ give $y^{-1}(1+xy)^4$; the remaining edge is primitive.  These are exactly the conditions of \cite[Propositions 3.7 and 3.9]{CKPT}.  All exponents lie in $P$, the three vertex coefficients are $1$, and the constant coefficient is $0$.  The independent coefficients at the residual points $(0,1),(0,2),(0,3)$ of the second subdivision are $b,a,c+3$, respectively.  We obtain this polynomial from the divisibility conditions.

Let $\overline C_\eta$ be the toric closure of $f=\eta$.  It has arithmetic genus $6$.  Its Newton triangle is Minkowski indecomposable: any nontrivial lattice summands of a triangle would be homothetic triangles, whereas its edge lengths $5,4,1$ have greatest common divisor $1$.  A nontrivial factorization would induce such a decomposition, so the curve is irreducible.  Its torus part is smooth for general $\eta$.

At the right boundary use $s=xy$ and $z=y$, with boundary $z=0$.  The local equation $z(f-\eta)=0$ restricts to $(1+s)^4$, and its transverse derivative at $(s,z)=(-1,0)$ is $a-b-c-\eta-1$.  For general parameters the point is smooth, with order-$4$ tangency and $\delta=0$.  At the top boundary put $t=x+1$, $u=y^{-1}$.  The equation $xu^3(f-\eta)=0$ has no terms of total degree below $3$, and its cubic part is
\[
\boxed{(2-c)t^3+(4-a)t^2u+(6-b)tu^2+(4+\eta)u^3.}
\]
This binary cubic has distinct roots for general parameters: at $a=b=c=\eta=0$ its discriminant is $-448$.  The point is therefore an ordinary triple point with $\delta=3$.  For general $c$, the residual quadratic has two simple roots different from $-1$, giving transverse intersections with the top boundary.  The primitive third edge also gives a transverse intersection, and there are no further boundary singularities.  Consequently
\[
g\bigl(\widetilde{\overline C_\eta}\bigr)=6-3=3,
\qquad
\boxed{(g_{\mathrm{cat}},g_{\mathrm{cat}}^{\mathrm{qG}},g_{\mathrm{MMLP}})=(6,3,3).}
\]
The ordinary triple point realizes $\delta=1\binom32=3$ in Proposition~\ref{prop:boundary-delta}, exactly the contribution of the width-$3$, height-$3$ $T$-part of this mixed cone.
\end{example}

\begin{example}[The five-point surface $X_{5,5/3}$]\label{ex:five-point-toric}
Corti--Heuberger give a toric $\Q$-Gorenstein degeneration of $X_{5,5/3}$ with polygon $P_7$ \cite[Section 7 and Table 4, polygon 7]{CH}:
\[
P_7=\operatorname{conv}\{(2,1),(1,2),(-1,2),(-2,1),(-2,-1),(-1,-2),(1,-1)\}.
\]
The edge cones consist of five $\frac13(1,1)$ points with $(w,\ell)=(1,3)$ and two $\frac14(1,1)$ points with $(w,\ell)=(2,2)$.  Thus
\[
g_{\mathrm{cat}}(\mathcal X_{P_7})=1+5\cdot1+2\cdot1=8.
\]
The two $T$-singularities each contribute a drop of $\binom22=1$ under smoothing, while the five residual points persist.  Corollaries~\ref{cor:t-content-drop} and~\ref{cor:qg-generic-mirror-genus} give
\[
g_{\mathrm{cat}}^{\mathrm{qG}}(X_{P_7})=8-2=6,
\qquad \boxed{g_{\mathrm{MMLP}}=6=5+1.}
\]
This closes the apparent $k=5$ gap in the Oneto--Petracci calculations discussed above using the toric degeneration, independently of the uncomputed Oneto--Petracci quantum period.
\end{example}

\begin{example}[The $X_{10}$ mirror polygon: a drop of $34$]\label{ex:x10-toric-drop}
Return to the triangle $P$ in Example~\ref{ex:x10-mirror-polygon}, and let $X_P$ be the toric surface defined by the fan over its edges, with canonical stack $\mathcal X_P$.  Its three edge cones have width--height pairs
\[
(w,\ell)=(10,3),\quad(10,5),\quad(10,2).
\]
Theorem~\ref{thm:cyclic-rank} and Proposition~\ref{prop:toric-categorical-genus} give
\[
g_{\mathrm{cat}}(\mathcal X_P)=1+10+20+5=36=\#(P^\circ\cap\Z^2).
\]
The width decompositions are $10=3\cdot3+1$, $10=2\cdot5$, and $10=5\cdot2$, so the only residue has width one and height three, of type $\frac13(1,1)$.  Consequently Proposition~\ref{prop:qg-residual-genus} and Corollary~\ref{cor:t-content-drop} give
\[
g_{\mathrm{cat}}^{\mathrm{qG}}(X_P)=2,\qquad
36-2=\frac12\bigl(3\cdot3\cdot2+2\cdot5\cdot4+5\cdot2\cdot1\bigr)=34.
\]
Figure~\ref{fig:x10-subdivision} displays these $34$ discarded points.  The genus-two curve in \eqref{eq:x10-hyperelliptic} realizes this residual genus explicitly.  It is the canonical stack of the toric degeneration that has genus $36$, not $\cX_{10}$, whose categorical genus is already two.
\end{example}

For a toric surface $X_P$ and a sufficiently general $\Q$-Gorenstein deformation $X_t$, the comparison is summarized by
\[
\begin{array}{c|c|c}
g_{\mathrm{cat}}(\mathcal X_P)&g_{\mathrm{cat}}(\mathcal X_t)&g_{\mathrm{MMLP}}\\\hline
\#(P^\circ\cap N)&1+\#(R\cap P^\circ)&1+\#(R\cap P^\circ)
\end{array}
\]
The first column is also the genus of a general nondegenerate Laurent-polynomial fiber with polygon $P$; the last column uses a general maximally mutable Laurent polynomial with the specified Newton polygon.  The difference between them is the weighted $T$-content contribution in Corollary~\ref{cor:t-content-drop}.  Petracci's example gives $(9,7,7)$ with separate rigid and pure $T$-cones.  Example~\ref{ex:mixed-cone-delta} gives $(6,3,3)$ from a genuinely mixed cone with $m>0$ and $w_0=2$.  Example~\ref{ex:five-point-toric} gives $(8,6,6)$, closing the $k=5$ gap, while Example~\ref{ex:x10-toric-drop} gives $(36,2,2)$ and a much larger $T$-content drop.

\begin{remark}[The scope of the mirror comparison]\label{rem:johnson-kollar-mirror}
Theorem~\ref{thm:intro-mirror} and Corollary~\ref{cor:qg-generic-mirror-genus} concern maximally mutable Laurent polynomial mirrors associated with toric $\Q$-Gorenstein degenerations.  The Johnson--Koll\'ar surfaces of Example~\ref{ex:johnson-kollar-categorical} have empty anticanonical system and therefore admit no such degeneration \cite[Section 1.1]{GR}.  Corti--Gugiatti construct hyperelliptic Landau--Ginzburg mirrors in the sense of matching periods, with general fiber of genus $3k+1$ \cite[Remarks 1.4 and 1.7 and Theorem 1.5]{CG}, whereas $g_{\mathrm{cat}}=6k+1$.  In this family the numerical values satisfy $2g_{\mathrm{mirror}}=g_{\mathrm{cat}}+1$; Theorem~\ref{thm:intro-mirror} does not apply because the surfaces admit no toric $\Q$-Gorenstein degeneration.
\end{remark}

\appendix
\section{The numerical inertial decomposition}\label{app:numerical-inertia}

Proposition~\ref{prop:numerical-inertia} requires an identification of numerical $K$-theory with numerical Chow theory of the inertia stack.  We prove this identification first, then use the isolated cyclic quotient geometry to separate one untwisted component from one twisted component for every nonidentity stabilizer element.  Throughout, $X$ is a projective surface with isolated cyclic quotient singularities $\frac1{n_j}(1,q_j)$, and $\cX$ is its canonical stack.

\begin{definition}[Grothendieck group]
Throughout the paper we use the convention
\[
K_0(\cX)=K_0\bigl(\Db(\coh\cX)\bigr)=G_0(\cX),
\]
where the last identification is induced by the alternating sum of coherent cohomology sheaves.
\end{definition}

Thus the Riemann--Roch theorem on $G_0$ applies to our $K_0$; no identification with vector-bundle $K$-theory is required.  Pullbacks, tensor products, and duals of coherent complexes below are derived.  Since $\cX$ is smooth, bounded coherent complexes are perfect, so the derived tensor product and derived dual used below remain in $\Db(\coh\cX)$.

The connected components of $I\cX$ are smooth in characteristic zero.  On a component $F$ of dimension $d$, cap product with $[F]$ identifies $A^p(F)$ with $A_{d-p}(F)$; we therefore write the Chow-homological Riemann--Roch maps in cohomological notation $A^*$.  Complexification separates the finite stabilizer representations into character eigenspaces and does not change the ranks used in Section~\ref{sec:serre}.

Working over $\C$ does not change the numerical quotient: for $V=K_0(\cX)_{\Q}$,
\[
\operatorname{rad}(\chi_{\C})=(\operatorname{rad}\chi_{\Q})\otimes_{\Q}\C.
\]
Indeed, write a complexified class as a finite sum with $\Q$-linearly independent complex coefficients.  If its Euler pairing with every rational class vanishes, each rational coefficient class is in the radical.  Thus extension of scalars commutes with passing to numerical equivalence.

The argument passes through the following maps, whose numerical compatibility and sector decomposition will be justified separately:
\[
\begin{aligned}
K_0(\cX)_{\C}&\xrightarrow{\ \widetilde{\operatorname{ch}}\ } A^*(I\cX)_{\C},\\
K_0^{\mathrm{num}}(\cX)_{\C}&\simeq A^*_{\mathrm{num}}(I\cX)_{\C},\\
A^*_{\mathrm{num}}(I\cX)_{\C}&\simeq A^*_{\mathrm{num}}(\cX)_{\C}
\oplus\bigoplus_j\bigoplus_{i=1}^{n_j-1}\C_{j,i}.
\end{aligned}
\]

\begin{definition}[Inertia morphism]
Recall that the inertia stack $I\cX$ parametrizes pairs $(x,g)$, where $g$ is an automorphism of $x$.  The identity automorphism gives the \emph{untwisted sector}, isomorphic to $\cX$; the connected components with $g\ne1$ are the \emph{twisted sectors}.  The inertia map $e:I\cX\to\cX$ forgets the automorphism.
\end{definition}

We use the inertial Chern character of \eqref{eq:inertial-character-convention}: each eigenbundle contributes its ordinary Chern character weighted by the eigenvalue of the sector element.  This is Edidin's eigenvalue-twisting construction \cite[Section 4.3.2]{Edidin}; it is multiplicative and extends linearly to $K_0(\cX)_{\C}$.

\begin{definition}[Numerical Chow groups]
For a smooth proper stack $Z$, possibly disconnected, write $A^*_{\mathrm{num}}(Z)_{\C}$ for its Chow group modulo the radical of the intersection pairing $\langle\alpha,\beta\rangle=\int_Z\alpha\beta$.  Integration sums the degrees of the zero-dimensional parts over the connected components.  In particular, $\alpha$ is numerically trivial precisely when this integral vanishes for every $\beta$.
\end{definition}

On the untwisted component we use the ordinary intersection product on the smooth stack $\cX$.  The comparison with the coarse surface is made afterwards via Vistoli's rational coarse-space correspondence, in Lemma~\ref{lem:untwisted-rank}.

The first two lemmas record the standard Riemann--Roch machinery: the inertial Chern character is an isomorphism, and its expression for the Euler pairing identifies the numerical radicals.  The next two lemmas specialize to the cyclic quotient geometry, describing the inertia components and their numerical Chow groups.  A final lemma checks compatibility with tensoring by the canonical bundle, which is the feature needed for the Serre calculation in Section~\ref{sec:serre}.

\begin{lemma}[Inertial Chern character and Riemann--Roch]\label{lem:app-inertial-ch}
Let $N_e=(e^*T_{\cX})^{\mathrm{mov}}$ be the normal bundle of the inertia map, and let $t$ denote Edidin's eigenvalue-twisting operator from \cite[Section 4.3.2]{Edidin}, with the convention of \eqref{eq:inertial-character-convention}.  The inertial Riemann--Roch isomorphism $\tau={}^I\tau_{\cX}$ of \cite[Theorem 5.1]{Edidin} has the form
\[
\tau(v)=\widetilde{\operatorname{ch}}(v)\,\mathcal T,
\qquad
\mathcal T|_F=
\frac{\operatorname{td}(T_F)}
{\operatorname{ch}\bigl(t(\lambda_{-1}(N_e^\vee|_F))\bigr)}
\]
on each component $F\subset I\cX$, where $\lambda_{-1}(N_e^\vee)=\sum_a(-1)^a[\bigwedge^aN_e^\vee]$.  The class $\mathcal T$ is a unit in $A^*(I\cX)_{\C}$.  Consequently
\[
\widetilde{\operatorname{ch}}:K_0(\cX)_{\C}\xrightarrow{\sim}A^*(I\cX)_{\C}
\]
is an isomorphism of complex vector spaces.
\end{lemma}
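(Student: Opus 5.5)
The plan is to read the statement off Edidin's inertial Riemann--Roch theorem \cite[Theorem 5.1]{Edidin}, which is an isomorphism $\tau={}^I\tau_{\cX}:G_0(\cX)_{\C}\to A^*(I\cX)_{\C}$. The argument then reduces to three checks: that $\tau$ factors as $\widetilde{\operatorname{ch}}\cdot\mathcal T$, that $\mathcal T$ is a unit, and that the isomorphism property transfers from $\tau$ to $\widetilde{\operatorname{ch}}$.

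For the factorization, I would recall Edidin's construction. Pull back along the inertia map $e$, apply the eigenvalue-twisting operator $t$, divide by $\operatorname{ch}\bigl(t(\lambda_{-1}(N_e^\vee))\bigr)$, and apply the ordinary Riemann--Roch map $\tau_F$ on each component $F$. For a smooth stack (here a smooth tame Deligne--Mumford stack, where $G_0=K_0$ by our convention), $\tau_F(w)=\operatorname{ch}(w)\operatorname{td}(T_F)$. Since $t$ is multiplicative and $\operatorname{ch}\circ t\circ e^*$ is exactly $\widetilde{\operatorname{ch}}$ under convention \eqref{eq:inertial-character-convention}, this gives $\tau(v)=\widetilde{\operatorname{ch}}(v)\mathcal T$ with $\mathcal T$ as stated. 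The key point to verify is that Edidin's twisting uses the same eigenvalue labelling as our convention; the remark following \eqref{eq:inertial-character-convention} already asserts this. Even if the labelling were inverted, the formula would change only by relabelling sectors $g\leftrightarrow g^{-1}$, which is an automorphism of $I\cX$ and does not affect the conclusion.

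For the unit property, I would examine each factor. First, $\operatorname{td}(T_F)=1+(\text{nilpotent})$ is a unit, since $A^{>0}$ is nilpotent on a finite-dimensional Chow ring. Second, on $F$ the bundle $N_e$ is the moving part, so $g$ acts on each eigen-subbundle $N_\lambda$ by $\lambda\neq1$. After the splitting principle, $\operatorname{ch}(t(\lambda_{-1}N_\lambda^\vee))=\prod(1-\bar\lambda e^{-x_k})$, whose degree-zero term is $\prod(1-\bar\lambda)\neq0$. This product is therefore a unit, and so is its inverse. On the untwisted sector $N_e=0$ and $\mathcal T=\operatorname{td}(T_{\cX})$.

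Finally, multiplication by a unit is a linear automorphism of $A^*(I\cX)_{\C}$, so $\widetilde{\operatorname{ch}}=\tau\cdot\mathcal T^{-1}$ is an isomorphism because $\tau$ is. I expect the main obstacle to be bookkeeping rather than substance. Edidin states the theorem for $G_0$ of a quotient stack with Chow homology, so I must confirm that our $\cX$ qualifies: a tame smooth Deligne--Mumford surface stack with projective coarse space is a global quotient. I must also confirm that Poincaré duality on the smooth components $F$, as recalled above, turns his homological statement into the cohomological form used here.
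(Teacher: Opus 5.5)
Your proposal is correct and follows essentially the same route as the paper. Both arguments factor Edidin's $\tau$ through multiplicativity of $t$ and the identity $\widetilde{\operatorname{ch}}=\operatorname{ch}\circ t\circ e^*$, and both conclude that $\mathcal T$ is a unit because it has nonzero constant term on every component and positive-codimension classes are nilpotent. Both then obtain $\widetilde{\operatorname{ch}}=\tau\cdot\mathcal T^{-1}$ as an isomorphism. The only difference is in verifying the quotient-stack hypothesis: the paper uses the frame bundle of $T_{\cX}$, which is an algebraic space because the stabilizers act faithfully, whereas you cite a general global-quotient theorem for smooth stacks with quasi-projective coarse space.
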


\begin{proof}
The quotient-stack theorem applies: stabilizers act faithfully on $T_{\cX}$, so its frame bundle is an algebraic space presenting $\cX$ as a quotient by $\operatorname{GL}_2$.  Edidin \cite[Theorem 5.1]{Edidin} supplies the isomorphism on $G_0$ with formula
\[
\tau(v)=\operatorname{ch}\!\left(t\!\left(
\frac{e^*v}{\lambda_{-1}(N_e^\vee)}\right)\right)
\operatorname{td}(T_{I\cX}).
\]
Here $e$ and $N_e^\vee$ are his $f$ and $N_f^*$; multiplicativity and $\widetilde{\operatorname{ch}}(v)=\operatorname{ch}(t(e^*v))$ give the stated factorization.  Krishna--Sreedhar \cite[Theorem 1.2]{KrishnaSreedhar} provide the more general Atiyah--Segal/Riemann--Roch framework.

On the untwisted component, $N_e=0$ and $\mathcal T=\operatorname{td}(T_{\cX})$ has constant term $1$.  On a twisted component, every moving eigenvalue differs from $1$, so the denominator has nonzero constant term; the Todd numerator has constant term $1$.  Thus $\mathcal T$ has nonzero degree-zero term on every component.  Positive-codimension classes are nilpotent, so $\mathcal T$ is a unit.  Multiplication by $\mathcal T^{-1}$ turns $\tau$ into the asserted Chern-character isomorphism.
\end{proof}

To pass to numerical equivalence, we must express dualization in these coordinates.

\begin{definition}[Duality on the inertia Chow group]
Let $\iota:I\cX\to I\cX$ send $(x,g)$ to $(x,g^{-1})$.  This is an involutive automorphism of the inertia stack, so $\iota^*$ is an isomorphism of its Chow groups.  For $\alpha=\sum_p\alpha_p$, with $\alpha_p\in A^p(I\cX)_{\C}$, define
\[
\alpha^\dagger=\iota^*\left(\sum_p(-1)^p\alpha_p\right).
\]
\end{definition}
Dualization reverses the character eigenvalues and multiplies the codimension-$p$ Chern character by $(-1)^p$.  Hence this $\C$-linear involution, which is not complex conjugation, encodes duality as
\[
\widetilde{\operatorname{ch}}(v^\vee)=\widetilde{\operatorname{ch}}(v)^\dagger.
\]

\begin{lemma}[Compatibility with numerical equivalence]\label{lem:app-numerical}
The inertial Chern character induces an isomorphism
\[
K_0^{\mathrm{num}}(\cX)_{\C}\simeq A^*_{\mathrm{num}}(I\cX)_{\C}.
\]
\end{lemma}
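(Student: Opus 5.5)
The plan is to express the Euler pairing in the coordinates of Lemma~\ref{lem:app-inertial-ch} as a nondegenerately twisted version of the intersection pairing on $I\cX$, and then compare radicals. The inertial Riemann--Roch (Hirzebruch--Riemann--Roch for stacks, as in Edidin \cite[Theorem 5.1]{Edidin} applied to the pushforward to a point) gives
\[
\chi_{\cX}(u,v)=\int_{I\cX}\widetilde{\operatorname{ch}}(u^\vee\otimes v)\,\mathcal T',
\]
where $\mathcal T'$ is the unit class of Lemma~\ref{lem:app-inertial-ch}, up to the standard normalization on each twisted component by the order of the stabilizer. Such a constant is harmless because it is nonzero. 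Using multiplicativity of $\widetilde{\operatorname{ch}}$ and the duality identity $\widetilde{\operatorname{ch}}(u^\vee)=\widetilde{\operatorname{ch}}(u)^\dagger$, this becomes
\[
\chi_{\cX}(u,v)=\bigl\langle \widetilde{\operatorname{ch}}(u)^\dagger,\ \widetilde{\operatorname{ch}}(v)\,\mathcal T'\bigr\rangle_{I\cX}.
\]

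With this formula, the radical comparison is linear algebra. The map $\widetilde{\operatorname{ch}}$ is an isomorphism by Lemma~\ref{lem:app-inertial-ch}. The operator $\dagger$ is an involutive linear automorphism of $A^*(I\cX)_{\C}$, and multiplication by the unit $\mathcal T'$ is also an automorphism. Hence $v$ lies in the right radical of $\chi$ exactly when $\widetilde{\operatorname{ch}}(v)\mathcal T'$ pairs to zero with all of $A^*(I\cX)_{\C}$. That is, $\widetilde{\operatorname{ch}}(v)\mathcal T'$ must lie in the radical of the intersection pairing.

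The remaining step is to show that this radical is an ideal, so that multiplying by $\mathcal T'^{-1}$ preserves it. If $\langle\alpha,\beta\rangle=0$ for all $\beta$, then $\langle\alpha\gamma,\beta\rangle=\langle\alpha,\gamma\beta\rangle=0$ for all $\beta$ and $\gamma$. Therefore $\widetilde{\operatorname{ch}}$ carries $\operatorname{rad}\chi_{\C}$ onto the numerical radical. Since the left and right radicals of $\chi$ agree, as noted after the definition of the Euler pairing, passing to quotients gives the isomorphism. Earlier we showed that complexification commutes with the numerical quotient, so the statement is consistent with $K_0^{\mathrm{num}}(\cX)_{\C}=K_0^{\mathrm{num}}(\cX)_{\Q}\otimes\C$.

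I expect the main obstacle to be the first step: stating the Euler-pairing form of inertial Riemann--Roch with the correct sector conventions. The pushforward $\cX\to\mathrm{pt}$ must be compatible with Edidin's $\tau$, and the twisting on a component labelled by $g$ has to match the label of the inverse component once the dual is introduced. My first approach would be to apply Edidin's theorem to the class $u^\vee\otimes v$ and compute $\chi(\cX,-)$ by pushing forward to a point, where $\tau$ is the identity. This gives $\chi=\int_{I\cX}\tau(u^\vee\otimes v)$ with component-dependent positive rational weights. Any mismatch between the labels $g$ and $g^{-1}$ is then absorbed into $\iota^*$, which is an automorphism and so does not affect the argument.
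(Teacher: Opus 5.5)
Your proposal is correct and follows the paper's proof: inertial Riemann--Roch pushed forward to a point writes $\chi$ as the intersection pairing on $I\cX$ twisted by $\dagger$ and the unit $\mathcal T$, and the ideal property of the numerical radical completes the radical comparison. The only difference is that you test the right radical, so $\dagger$ only needs to be bijective, and you then invoke the equality of left and right radicals; the paper works with the left radical and therefore checks that the radical is graded and $\iota^*$-stable, so that $\dagger$ preserves it.
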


\begin{proof}
Let $p:\cX\to\operatorname{Spec}\C$ be the structure morphism and put
\[
a=v^\vee\otimes^{\mathbf L}w.
\]
By the proper-pushforward compatibility of inertial Riemann--Roch on $G_0$ for $p$ \cite[Theorem 5.1 and Corollary 5.3]{Edidin},
\[
\chi(v,w)=\int_{I\cX}{}^I\tau_{\cX}(a).
\]
Using Lemma~\ref{lem:app-inertial-ch}, multiplicativity of the inertial Chern character, and the duality convention above, this becomes
\begin{equation}\label{eq:inertial-euler-pairing}
\chi(v,w)=\int_{I\cX}
\widetilde{\operatorname{ch}}(v)^\dagger
\widetilde{\operatorname{ch}}(w)\,\mathcal T.
\end{equation}
Write $A=A^*(I\cX)_{\C}$ and let $N$ be the radical of its intersection pairing.  We check explicitly that the two operations modifying that pairing in \eqref{eq:inertial-euler-pairing} preserve $N$.

First, $N$ is an ideal: if $\alpha\in N$ and $u,\beta\in A$, then $\int\alpha u\beta=0$, since $u\beta$ is another test class.  Multiplication by $\mathcal T$ therefore preserves $N$, and so does multiplication by $\mathcal T^{-1}$.  Thus $\alpha\mathcal T\in N$ if and only if $\alpha\in N$.

Second, $N$ is graded, because classes of each codimension can be tested against complementary codimensions on each component.  Changing the codimension-$p$ part by $(-1)^p$ therefore preserves $N$.  The isomorphism $\iota$ permutes the components and preserves intersection degrees, so $\iota^*$ also preserves $N$.  Since $\dagger$ is an involution, $\alpha^\dagger\in N$ if and only if $\alpha\in N$.

Finally, put $\alpha=\widetilde{\operatorname{ch}}(v)$.  By Lemma~\ref{lem:app-inertial-ch}, the classes $\widetilde{\operatorname{ch}}(w)$ run through all of $A$.  Multiplication by the unit $\mathcal T$ leaves this set of test classes unchanged.  Consequently
\[
\begin{aligned}
\chi(v,w)=0\text{ for every }w
&\quad\Longleftrightarrow\quad
\int_{I\cX}\alpha^\dagger\beta=0\text{ for every }\beta\in A\\
&\quad\Longleftrightarrow\quad \alpha^\dagger\in N
\quad\Longleftrightarrow\quad \alpha\in N.
\end{aligned}
\]
Thus Euler-numerical triviality is exactly numerical triviality under the inertial Chern character.  Quotienting the isomorphism of Lemma~\ref{lem:app-inertial-ch} by these corresponding radicals proves the result.
\end{proof}

The Riemann--Roch step is complete.  We now use the specific geometry of isolated cyclic quotient points to identify the numerical summands.

\begin{lemma}[Twisted sectors of isolated cyclic quotient points]\label{lem:app-sectors}
The inertia stack of $\cX$ decomposes as
\[
I\cX=\cX\sqcup\coprod_j\coprod_{i=1}^{n_j-1}\mathcal B\mu_{n_j}^{(i)}.
\]
\end{lemma}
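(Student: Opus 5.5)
The decomposition of $I\cX$ will follow from a pointwise description of stabilizers followed by a description of the components. I will separate the identity from the nonidentity automorphisms, and then group the nonidentity ones by singular point and by group element.

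First, the canonical stack has trivial stabilizers away from the finitely many points $p_j$ lying over the singular points. At $p_j$ the stabilizer is $\mu_{n_j}$, and it acts on the tangent space through $\operatorname{diag}(\zeta,\zeta^{q_j})$. Locally $\cX$ is the quotient $[\C^2/\mu_{n_j}]$, and the inertia stack is étale-local over $\cX$. It therefore suffices to compute $I[\C^2/G]$ for $G=\mu_{n_j}$ abelian. For this quotient one has
\[
I[\C^2/G]=\coprod_{g\in G}\bigl[(\C^2)^g/C_G(g)\bigr]=\coprod_{g\in G}\bigl[(\C^2)^g/G\bigr].
\]
The second equality holds because the centralizer $C_G(g)$ is all of $G$, and no conjugacy classes need to be merged.

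Next I compute the fixed loci. The element $g=1$ has fixed locus $\C^2$, and these pieces glue over the charts to the untwisted sector, which is identified with $\cX$ via the section $x\mapsto(x,\mathrm{id})$. For $g=\zeta^i$ with $1\le i\le n_j-1$, the eigenvalues are $\zeta^i$ and $\zeta^{iq_j}$. Both differ from $1$ because $\gcd(q_j,n_j)=1$. The fixed locus is therefore the origin alone, and the corresponding component is $[\mathrm{pt}/\mu_{n_j}]=\mathcal B\mu_{n_j}^{(i)}$. These components are distinct for distinct $i$, since the automorphism $g$ is part of the data of a point of $I\cX$, and they are open and closed.

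Finally, I check that the untwisted sector is a connected component and that no twisted component escapes the charts. Away from the $p_j$ every automorphism is trivial, so each nonidentity pair $(x,g)$ has $x=p_j$ for some $j$. Because the $p_j$ are isolated, the corresponding loci are open and closed in $I\cX$. The main point requiring care is showing that the identity section is open and closed, so that $I\cX\to\cX$ restricted to it is an isomorphism. This follows because $I\cX\to\cX$ is finite and unramified, and the identity section is a closed immersion that is also open: in each chart the $g=1$ piece is a separate summand. Counting the pieces gives exactly $1+\sum_j(n_j-1)$ components, which is the stated decomposition.
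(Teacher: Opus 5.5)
Your proposal is correct and follows the paper's argument. Stabilizers are trivial off the singular points. Each nonidentity $\zeta^i\in\mu_{n_j}$ has tangent eigenvalues $\zeta^i,\zeta^{iq_j}\neq 1$: the first because $1\le i\le n_j-1$, the second because $\gcd(n_j,q_j)=1$. Its fixed locus is therefore the origin, and commutativity of $\mu_{n_j}$ gives one component $\mathcal B\mu_{n_j}$ per element.

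Your use of the local formula $I[\C^2/G]=\coprod_{g}[(\C^2)^g/C_G(g)]$ and your check that the identity section is open and closed only make explicit what the paper leaves implicit.
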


\begin{proof}
The canonical stack has trivial stabilizers away from the singular points.  Near a point of type $\frac1n(1,q)$, a nonidentity element $\zeta^i\in\mu_n$ has tangent eigenvalues $\zeta^i$ and $\zeta^{iq}$.  Both differ from $1$ because $1\le i\le n-1$ and $\gcd(n,q)=1$, so its fixed locus is the origin.  Since $\mu_n$ is abelian, each nonidentity element gives one inertia component $[\{0\}/\mu_n]=\mathcal B\mu_n$.  Together with the identity sector, these are all the components.
\end{proof}

\begin{lemma}[Numerical Chow groups of twisted gerbes]\label{lem:app-gerbe-chow}
For a twisted component $\mathcal B\mu_n$,
\[
A^*_{\mathrm{num}}(\mathcal B\mu_n)_{\C}\simeq\C,
\]
concentrated in codimension zero.
\end{lemma}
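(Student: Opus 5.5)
The plan is to compute $A^*(\mathcal B\mu_n)_{\C}$ directly and then identify the radical of the intersection pairing on it. First, the rational (hence complex) Chow ring of $\mathcal B\mu_n$ with coefficients in a field of characteristic zero is $\C$ in codimension zero and vanishes in positive codimension. Integrally, $A^*(\mathcal B\mu_n)=\Z[c]/(nc)$, with $c$ the first Chern class of the tautological character; this follows from the standard computation for $\mathcal B\mu_n$ as a $\mathbb G_m$-quotient of $\mathbb G_m$ acting by $n$th powers. Tensoring with $\C$ kills the $n$-torsion in positive degrees. Alternatively, one can avoid the integral presentation. Pull back along the finite flat degree-$n$ cover $\operatorname{Spec}\C\to\mathcal B\mu_n$. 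Pushforward after pullback is multiplication by $n$, which is invertible, so $A^*(\mathcal B\mu_n)_{\C}$ injects into $A^*(\operatorname{Spec}\C)_{\C}=\C$. The fundamental class $[\mathcal B\mu_n]$ is nonzero, so the injection is onto the degree-zero part.

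Second, I show that the pairing is nondegenerate on this one-dimensional space. The component $\mathcal B\mu_n$ has dimension zero, so $\int_{\mathcal B\mu_n}[\mathcal B\mu_n]\cdot[\mathcal B\mu_n]=\deg[\mathcal B\mu_n]=1/n\neq0$. Hence the radical is zero and $A^*_{\mathrm{num}}(\mathcal B\mu_n)_{\C}=A^*(\mathcal B\mu_n)_{\C}\simeq\C$, spanned by the fundamental class in codimension zero.

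Finally, I note compatibility with the disconnected setting of the definition of numerical Chow groups. Integration over $I\cX$ is a sum over components, and distinct components have zero product. So the radical of $A^*(I\cX)_{\C}$ is the direct sum of the componentwise radicals, and the gerbe summand contributes exactly one line $\C_{j,i}$, as used in Proposition~\ref{prop:numerical-inertia}.

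The main obstacle is the first step, namely justifying that rational Chow groups of $\mathcal B\mu_n$ vanish in positive degree in the Edidin--Graham/Vistoli framework used by Edidin's Riemann--Roch. I would handle this through the degree-$n$ cover argument, which needs only the projection formula for the representable finite flat map $\operatorname{Spec}\C\to\mathcal B\mu_n$.
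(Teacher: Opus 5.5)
Your proposal is correct and follows the paper's proof. Both arguments first show that $A^*(\mathcal B\mu_n)_{\C}$ is one-dimensional and concentrated in codimension zero, and then use $\deg[\mathcal B\mu_n]=1/n\neq0$ to see that the fundamental class survives numerical equivalence. The only difference is in the first step: the paper cites Vistoli's rational coarse-space correspondence (the coarse space is a point), whereas you prove it directly with the transfer identity $\pi_*\pi^*=n$ along the degree-$n$ cover $\pi:\operatorname{Spec}\C\to\mathcal B\mu_n$. Your route is self-contained, and the same identity $\pi_*[\operatorname{Spec}\C]=n[\mathcal B\mu_n]$ also gives the value $1/n$ without a separate citation.
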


\begin{proof}
The rational coarse-space correspondence \cite[Proposition 6.1]{Vistoli} identifies the Chow groups of $\mathcal B\mu_n$ with those of a point.  Thus $A^*(\mathcal B\mu_n)_{\C}$ is one-dimensional in codimension zero, with no positive-codimension part.  Its fundamental class has degree $\deg[\mathcal B\mu_n]=1/n\ne0$ \cite[Section 3.2.1]{Edidin}, so it survives numerical equivalence.
\end{proof}

It remains to check that the sector decomposition is preserved by the Serre operator.

\begin{lemma}[Compatibility with the Serre operator]\label{lem:app-serre}
Under the inertial Chern character, tensor product with a line bundle $L$ acts by multiplication by $\widetilde{\operatorname{ch}}(L)$.  In particular, the Serre operator $S_{\cX}$ preserves every summand of the numerical inertial decomposition.
\end{lemma}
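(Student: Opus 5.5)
The plan is to prove multiplicativity first and then read off the eigenline structure. By the definition \eqref{eq:inertial-character-convention}, $\widetilde{\operatorname{ch}}(v)=\operatorname{ch}(t(e^*v))$, where $e^*$ is the inertia pullback and $t$ is Edidin's eigenvalue twisting. Both $e^*$ and $t$ are ring homomorphisms on $K_0$; this is the multiplicativity already invoked in Lemma~\ref{lem:app-inertial-ch}. The ordinary Chern character on each component is also multiplicative. Hence
\[
\widetilde{\operatorname{ch}}(L\otimes^{\mathbf L}v)=\widetilde{\operatorname{ch}}(L)\,\widetilde{\operatorname{ch}}(v).
\]
To check this concretely on a component $F$, I would decompose $e^*v|_F=\bigoplus_\lambda v_\lambda$. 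The line bundle $L|_F$ is a single eigenbundle with some eigenvalue $\mu$. Then $(L\otimes v)|_F=\bigoplus_\lambda L|_F\otimes v_\lambda$ has eigenvalues $\mu\lambda$, and
\[
\sum_\lambda\mu\lambda\operatorname{ch}(L|_F)\operatorname{ch}(v_\lambda)
\]
is the product of the two twisted characters.

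Next I would check that the multiplication descends to the numerical quotient. By the proof of Lemma~\ref{lem:app-numerical}, the radical $N$ is an ideal in $A^*(I\cX)_{\C}$. So multiplication by $\widetilde{\operatorname{ch}}(L)$ preserves $N$ and induces an endomorphism of $A^*_{\mathrm{num}}(I\cX)_{\C}$. Under the isomorphism of Lemma~\ref{lem:app-numerical}, this endomorphism corresponds to $L\otimes-$ on $K_0^{\mathrm{num}}(\cX)_{\C}$.

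Finally I would use that the product on $A^*(I\cX)$ is the componentwise product over the disjoint decomposition of Lemma~\ref{lem:app-sectors}. This is the ordinary intersection product on each component, not an orbifold product, and that is exactly how $\widetilde{\operatorname{ch}}$ is defined. Multiplication by any class therefore preserves each summand $A^*(F)$, and so also its numerical quotient. On $\cX$ itself the class is $\operatorname{ch}(L)$. On a twisted gerbe $\mathcal B\mu_{n_j}^{(i)}$, Lemma~\ref{lem:app-gerbe-chow} makes the numerical group the line $\C_{j,i}$, and the class acts by the scalar trace $\mu(\zeta^i)$. Taking $L=\omega_{\cX}$, the Serre operator is $E\mapsto E\otimes\omega_{\cX}[2]$, and the shift acts trivially on $K_0$. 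Hence $S_{\cX}$ is multiplication by $\widetilde{\operatorname{ch}}(\omega_{\cX})$ and preserves every summand.

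The only delicate point I anticipate is the product convention. I must confirm that the multiplicativity of Edidin's map is with respect to the componentwise product on $A^*(I\cX)$, rather than a Chen--Ruan-type product that could mix sectors. I expect this to hold because the twisting $t$ is applied after the plain pullback $e^*$, which is a ring map to $K_0(I\cX)$.
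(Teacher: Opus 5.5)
Your proposal is correct and follows the paper's proof essentially step for step. Both arguments use multiplicativity of $\widetilde{\operatorname{ch}}$, the ideal property of the numerical radical from Lemma~\ref{lem:app-numerical} to descend the multiplication, the componentwise product on $A^*(I\cX)_{\C}$, and the identification of the Serre functor with $-\otimes\omega_{\cX}[2]$. Your explicit eigenbundle check and the scalar action by $\mu(\zeta^i)$ on the gerbe lines are harmless elaborations of that argument.
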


\begin{proof}
By the ideal property of the numerical radical established in Lemma~\ref{lem:app-numerical}, multiplication by $\widetilde{\operatorname{ch}}(L)$ preserves that radical and therefore descends to $A^*_{\mathrm{num}}(I\cX)_{\C}$.  Multiplicativity gives
\[
\widetilde{\operatorname{ch}}(v\otimes L)
=\widetilde{\operatorname{ch}}(v)\,\widetilde{\operatorname{ch}}(L).
\]
Products on the Chow ring of the disjoint union $I\cX$ are componentwise, so every inertia-sector summand is preserved.  On a surface stack the Serre functor is tensor product with $\omega_{\cX}[2]$; the shift acts trivially on $K_0$.  Taking $L=\omega_{\cX}$ proves the assertion.  Thus the rank of $1-S_{\cX}$ can be computed separately on each summand, as used in Section~\ref{sec:serre}.
\end{proof}

\begin{proof}[Proof of Proposition~\ref{prop:numerical-inertia}]
Lemma~\ref{lem:app-numerical} gives $K_0^{\mathrm{num}}(\cX)_{\C}\simeq A^*_{\mathrm{num}}(I\cX)_{\C}$.  Lemma~\ref{lem:app-sectors} decomposes the inertia into $\cX$ and the components $\mathcal B\mu_{n_j}^{(i)}$, whose numerical Chow groups are each $\C$ by Lemma~\ref{lem:app-gerbe-chow}.  Since the intersection pairing is componentwise, these identifications yield
\[
\boxed{
K_0^{\mathrm{num}}(\cX)_{\C}
\overset{\widetilde{\operatorname{ch}}}{\simeq}
A^*_{\mathrm{num}}(I\cX)_{\C}
\simeq A^*_{\mathrm{num}}(\cX)_{\C}
\oplus\bigoplus_j\bigoplus_{i=1}^{n_j-1}\C_{j,i}.}
\]
Lemma~\ref{lem:app-serre} makes this decomposition $S_{\cX}$-invariant.  This proves the intrinsic assertion about the stack; Vistoli's comparison with $X$ is used only afterwards, in the untwisted calculation of Lemma~\ref{lem:untwisted-rank}.
\end{proof}

\section*{Acknowledgements}

The author thanks Sergey Galkin for pointing out the genus-two phenomenon for the mirror of $X_{10}$ and suggesting its relation to the antisymmetric Euler pairing.  The author also thanks Sergei Burkin for teaching him the modern viewpoint on characteristic classes, which influenced the Chern-character and Riemann--Roch perspective used here.

\section*{Funding}

The author acknowledges support from FAPERJ (Funda\c{c}\~ao Carlos Chagas Filho de Amparo \`a Pesquisa do Estado do Rio de Janeiro) through its Nota~10 Excellence Scholarship.

\section*{Declaration of generative AI and AI-assisted technologies in the writing process}

During the preparation of this manuscript, the author used ChatGPT (OpenAI) for language editing, organization, \LaTeX{} formatting, bibliographic research, mathematical exposition, and exploratory symbolic and numerical checks.  The author independently checked all mathematical statements, proofs, computations, and uses of references, and takes full responsibility for the final manuscript.

\printbibliography

\end{document}